\documentclass[11pt]{amsart}
\usepackage[T1]{fontenc}
\usepackage[latin9]{inputenc}
\usepackage{geometry}
\usepackage{comment}
\usepackage{bm}
\usepackage{dsfont}
\usepackage{enumerate}
\usepackage{amsmath}
\usepackage{amsthm}
\usepackage{amssymb}
\usepackage{color}
\usepackage{accents}
\usepackage{hyperref} 
\makeatletter

\theoremstyle{plain}
\newtheorem{thm}{\protect\theoremname}[section]
  \theoremstyle{definition}
  \newtheorem{defn}[thm]{\protect\definitionname}
      \theoremstyle{plain}
  \newtheorem{prop}[thm]{\protect\propositionname}
    \newtheorem{coro}[thm]{Corollary}
    \theoremstyle{thm}
  \newtheorem{lem}[thm]{\protect\lemmaname}
  \theoremstyle{remark}
  \newtheorem{rem}[thm]{\protect\remarkname}
  \theoremstyle{example}
  \newtheorem{example}[thm]{\protect\examplename}

\makeatother

\usepackage{babel}
  \providecommand{\definitionname}{Definition}
  \providecommand{\examplename}{Example}
  \providecommand{\remarkname}{Remark}
\providecommand{\theoremname}{Theorem}
\providecommand{\lemmaname}{Lemma}
\providecommand{\propositionname}{Proposition}

\newcommand{\abs}[1]{\ensuremath{\left|\text{}#1\text{}\right|}}

\newcommand{\mytilde}[1]{#1'}

\newcommand{\vx}{\mathbf x}
\newcommand{\vy}{\mathbf y}

\newcommand{\va}{\mathbf a}

\newcommand{\cI}{\mathcal I}
\newcommand{\dist}{{\rm dist}}
\newcommand{\conv}{{\rm conv}}
\newcommand{\ri}{\mathrm i}

\newcommand{\mycircledast}{\ast}
\newcommand{\conj}[1]{\overline{#1}}

\newcommand{\R}{\mathbb{R}}

\newcommand{\Z}{\mathbb{Z}}

\newcommand{\vk}{\mathbf{k}}

\newcommand{\innerp}[1]{\langle {#1} \rangle}

\newcommand{\cl}{\mathfrak{cl}\,}

\begin{document}

%\title{Determining Indicator Functions on $\mathbb{R}$ from Fourier Transform Magnitudes }
\title{Fourier Phase Retrieval for Finite Unions of Intervals }

\author{Yu Xia}
\thanks{ Yu Xia was supported by NSFC grant (12271133, U21A20426).}
\address{School of Mathematics, Hangzhou Normal University, Hangzhou 311121, China}
\email{yxia@hznu.edu.cn}

\author{Zhiqiang Xu}
\thanks{ Zhiqiang Xu is supported  by NSFC (12471361, 12021001, 12288201) . }
\address{ State Key Laboratory of Mathematical Sciences, Academy of Mathematics and Systems Science, Chinese Academy of Sciences, Beijing 100190, China\newline   School of Mathematical Sciences, University of Chinese Academy of Sciences, Beijing 100049, China.}
\email{xuzq@lsec.cc.ac.cn}
\date{}
\maketitle

\begin{abstract}

This paper investigates the one-dimensional Fourier phase retrieval problem for indicator functions of finite unions of intervals. Specifically, we study the recovery of a set $\Omega = \bigcup_{j=1}^m I_j \subset \mathbb{R}$ from the magnitude of its Fourier transform $|\widehat{\mathbf{1}_\Omega}|$, where each $I_j \subset \mathbb{R}$ is a bounded interval. 
For \(m\le 2\), we prove that \(\Omega\) is uniquely determined by \( \abs{\widehat{\mathbf{1}_\Omega}} \) up to the natural ambiguities of translation and reflection, and we further establish a stability result for this reconstruction.
In contrast, for \(m\ge 3\), uniqueness fails in general.
More precisely, for every \(m\ge 3\), we explicitly construct functions \(f_m,g_m\in\mathcal{I}_m\) such that
$
\abs{\widehat{f_m}}=\abs{\widehat{g_m}},
$
while \(f_m\) cannot be obtained from \(g_m\) by any translation or reflection, where \(\mathcal{I}_m\) denotes the class of indicator functions of unions of exactly \(m\) intervals.

Furthermore, building on the theory of the turnpike problem, in which a finite integer set is uniquely determined by its multiset of pairwise differences under a collision-free condition, we establish an analogous result for finite subsets of $\mathbb{R}$. This, in turn, yields a sufficient condition for recovering indicator functions of finite unions of intervals.
These results provide a complete characterization of the Fourier phase retrieval problem for indicator functions of finite unions of intervals and offer new insights into Fourier phase retrieval for indicator functions of more general domains in higher dimensions.

\end{abstract}
\section{Introduction}

\subsection{Fourier phase retrieval}
{
The Fourier  phase retrieval problem, which seeks to reconstruct a function from the magnitude of its Fourier transform, has attracted considerable attention across diverse fields including crystallography, astronomy, optics, and signal processing \cite{Review,phase1,phase2,phase3}.  While the general phase retrieval problem is known to be highly ill-posed due to inherent ambiguities such as global phase shifts and complex conjugate reflections, the situation may be more favorable when additional structural constraints are imposed on the target functions.

An important special case in Fourier phase retrieval is when the target function is an indicator function
$f=\mathds{1}_{\Omega}$, where $\Omega\subset {\mathbb R}^d$ is a measurable set and $\mathds{1}_{\Omega}$ is the indicator function of $\Omega$, i.e.,
\[
\mathds{1}_{\Omega}(x):=
\begin{cases}
1, & x\in \Omega,\\
0, & x\notin \Omega.
\end{cases}
\]
The aim is to recover $\Omega$, up to a translation  and reflection, from $\abs{\widehat{\mathds{1}_\Omega}}$, where $\widehat{\mathds{1}_\Omega}$ denotes the Fourier transform of $\mathds{1}_\Omega$.
For a convex domain $\Omega \subset \mathbb{R}^d$, the problem of reconstructing $\Omega$ from $\abs{ \widehat{\mathds{1}_\Omega} }$  is equivalent to the covariogram problem  in discrete geometry.
There is an extensive literature on the case where $\Omega$ is a convex domain (see \cite{bianchi1}).

However, there are very few works addressing the case where $\Omega$ is a general set. Among the limited existing studies on general sets, we mention the following contributions. In \cite{bianchi2}, the authors investigate the case where 
$\Omega \in \mathcal{B} \subset \mathbb{R}^2$, where $\mathcal{B}$ 
denotes the class of planar compact sets whose boundary consists of 
a finite number of pairwise disjoint closed simple polygonal curves. 
It is established in \cite[Theorem~1.3]{bianchi2} that if 
$|\widehat{\mathds{1}_\Omega}| = |\widehat{\mathds{1}_K}|$ for some 
convex body $K$ in $\mathbb{R}^2$, then $\Omega$ is necessarily 
convex.  In \cite{bruno}, geometric information extracted from 
$|\widehat{\mathds{1}_\Omega}|$ is studied under the assumption that 
$\Omega$ is a measurable set.
\begin{comment}
 In particular, the covariogram 
$g_\Omega(y) = |\Omega \cap (y + \Omega)|$ is analyzed, and it is 
shown that the directional derivatives of $g_\Omega$ at the origin 
are intimately related to the geometric properties of $\Omega$, 
such as its directional variations and perimeter.
\end{comment}

Many problems in analysis and geometry involve understanding the structural properties of subsets of \(\mathbb{R}^d\). A common approach is to first study the one-dimensional setting, where general sets are often modeled by finite unions of intervals. This type of reduction arises, for example, in the study of Fuglede's conjecture, which asserts that a measurable set \(\Omega \subset \mathbb{R}^d\) is spectral if and only if it tiles \(\mathbb{R}^d\) by translations \cite{Fuglede1974}. In one dimension, it is therefore natural to consider sets \(\Omega\) that are finite unions of intervals. This perspective is reflected in several works, including \L aba's study of Fuglede's conjecture for unions of two intervals \cite{Laba2001}, and the result of Bose and Madan showing that the spectrum of a spectral set consisting of a union of \(n\) intervals must be periodic \cite{BoseMadan2011}. We refer the reader to \cite{FugK} for an alternative proof.

A similar philosophy applies to Fourier phase retrieval for indicator functions. We adopt this one-dimensional perspective and focus on sets \(\Omega \subset \mathbb{R}\) that are finite unions of intervals, investigating the Fourier phase retrieval problem for the corresponding indicator functions \(\mathds{1}_\Omega\). Such functions arise naturally in a variety of applications, including signal processing, where they model compactly supported piecewise constant signals \cite{diffraction}. Despite the fundamental nature of this model, the corresponding Fourier phase retrieval problem has received relatively little attention in the literature. At the same time, finite unions of intervals form the simplest class of non-convex sets and provide a natural testing ground for developing techniques that may eventually extend to more general measurable sets.

\subsection{Problem formulation}

Let $f:\mathbb{R}\rightarrow \{0,1\}$ be a finite union of interval indicator functions, i.e.,
\begin{equation}
\label{eqn: f}
f(x):=\sum_{k=1}^{m}\mathds{1}_{[a_k,b_k]}(x),
\end{equation}
where $m\in \mathbb{N}$ and $\{[a_k,b_k]\}_{k=1}^m$ are pairwise disjoint intervals.
For $m\in\mathbb{N}$, let $\mathcal{I}_m$ denote the set of all such functions with $m$ disjoint intervals:
\[
\mathcal{I}_m:=\left\{f(x)=\sum_{k=1}^{m}\mathds{1}_{[a_k,b_k]}(x) : a_1<b_1<a_2<b_2<\cdots<a_m<b_m\right\}.
\]
We also define
\[
\mathcal{I}:=\bigcup_{m=1}^{\infty}\mathcal{I}_m.
\]
Let $\widehat{f}$ denote the Fourier transform of $f$, i.e.,
$\widehat{f}(\omega)=\int_{-\infty}^{\infty}f(x)\exp(-2\pi \mathrm{i}\cdot \omega\cdot x)dx$.
Through direct computation, we can derive the Fourier transform of $f$ in (\ref{eqn: f}) as:
 \[
 \begin{aligned}
 \widehat{f}(\omega)=&\int_{-\infty}^{\infty}f(x)\exp(-2\pi \mathrm{i}\cdot \omega\cdot x)dx=\sum_{k=1}^{m}\int_{a_k}^{b_k}\exp(-2\pi \mathrm{i} \cdot \omega\cdot x)dx\\
 =&\sum_{k=1}^{m}\frac{\exp(-2\pi \mathrm{i}\cdot b_k\cdot \omega)-\exp(-2\pi \mathrm{i} \cdot a_k\cdot \omega)}{-2\pi \mathrm{i} \cdot \omega},
 \end{aligned}
  \]
for any $\omega\in \mathbb{R}$. Consequently, the absolute value of $\widehat{f}(\omega)$ is expressed as:
  \begin{equation}
  \label{eqn: fourier}
  |\widehat{f}(\omega)|=\abs{\sum_{k=1}^{m}\frac{\exp(-2\pi \mathrm{i}\cdot b_k\cdot \omega)-\exp(-2\pi \mathrm{i}\cdot a_k\cdot \omega)}{2\pi\mathrm{i}\cdot \omega}}.
  \end{equation}
For $f, g\in \cI$, a simple observation is that  $\abs{\widehat{f}} = \abs{\widehat{g}}$ on $\mathbb{R}$ 
if $g$ is obtained from $f$ by either
\begin{enumerate}[(i)]
\item a translation: $g(x) = f(x+c)$ for some $c \in \mathbb{R}$, i.e.,
\[
g(x) = f(x+c)=\sum_{k=1}^{m}\mathds{1}_{[a_k-c,b_k-c]}(x),
\]
or
\item a reflection combined with translation: $g(x) = f(-x+c)$ for some $c \in \mathbb{R}$, i.e.,
\[
g(x) = \sum_{k=1}^{m}\mathds{1}_{[-b_k+c,-a_k+c]}(x).
\]
\end{enumerate}
\begin{comment}
\begin{prop}
Let $f$ be defined as in \eqref{eqn: f}. If $g$ is obtained from $f$ by either
\begin{enumerate}[(i)]
\item a translation: $g(x) = f(x+c)$ for some $c \in \mathbb{R}$, i.e.,
\[
g(x) = \sum_{k=1}^{m}\mathbf{1}_{[a_k+c,b_k+c]}(x),
\]
or
\item a reflection combined with translation: $g(x) = f(-x+c)$ for some $c \in \mathbb{R}$, i.e.,
\[
g(x) = \sum_{k=1}^{m}\mathbf{1}_{[-b_k+c,-a_k+c]}(x),
\]
\end{enumerate}
then $\abs{\widehat{f}} = \abs{\widehat{g}}$ on $\mathbb{R}$.

\end{prop}
\end{comment}
%Therefore, the determination of $f$ from $|\widehat{f}|$ must be defined in a manner that accounts for these two inherent ambiguities.
Therefore, determining $f$ from $\abs{\widehat{f}}$ must account for these two inherent ambiguities.
To make this precise, we introduce the following definition, which formalizes the notion of uniqueness up to these ambiguities.
\begin{defn}\label{def: unique_recovery}
 \begin{enumerate}[(i)] 
 \item We define an {\em equivalence relation} $\sim$ on $\mathcal{I}$ by setting $f \sim g$ if and only if $g$ satisfies either $g(x) = f(x+c)$ or $g(x) = f(-x+c)$ for some constant $c \in \mathbb{R}$ and all $x \in \mathbb{R}$. For any $f\in \mathcal{I}$, we use $[f]$ to denote the equivalence class containing $f$.
  \item An interval indicator function $f\in \mathcal{I}$  is said to be {\em uniquely determined} by $|\widehat{f}|$
  if every interval indicator function $g\in \mathcal{I}$ satisfying $|\widehat{g}| = |\widehat{f}|$ on ${\mathbb R}$
  belongs to  $[f]$, i.e., $g\sim f$. 
  \end{enumerate} 
  \end{defn}

 In this paper, we focus on the following  problems, concerning the uniqueness of recovery and the stability of the reconstruction process:

\begin{enumerate}[{\bf Problem 1.}] \item For a given $f \in \mathcal{I}$, can $f$ be uniquely determined from { $|\widehat{f}|$, the magnitude 
  of its Fourier transform,} alone?

\item Assuming an affirmative answer to Problem 1, is the reconstruction procedure stable? More precisely, does the closeness of $|\widehat{g}|$
  to $|\widehat{f}|$
  imply that $g$ lies in a neighborhood of the equivalence class $[f]$? 
  \end{enumerate}

\subsection{Our contribution}

First of all, we present that $f\in \cI_m$ with $m\leq 2$ can be uniquely determined by $\abs{\widehat{f}}$. 

\begin{thm}
\label{thm: unique_determine}
For any $f\in \cI_m\subset \cI$ with $m\in\{1,2\}$, the function
 $f$ can be uniquely determined by $\abs{\widehat{f}}$  up to equivalence.
 Specifically, if $g\in \cI$ satisfies $\abs{\widehat{f}}=\abs{\widehat{g}}$, then $f\sim g$.
\end{thm}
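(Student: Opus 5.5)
Since $f$ and $g$ are real, $|\widehat{f}|^2=\widehat{f}\,\overline{\widehat{f}}$ is the Fourier transform of the autocorrelation
\[
A_f(t)=\int_{\mathbb R} f(x)f(x+t)\,dx=\big|\Omega\cap(\Omega-t)\big| .
\]
By Fourier injectivity on $L^1$, the hypothesis $|\widehat f|=|\widehat g|$ is equivalent to $A_f=A_g$. So we can work entirely with the covariogram $A_f$, which is a continuous, even, compactly supported, piecewise linear function.

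We read off the following invariants from $A$.
\begin{enumerate}[(a)]
\item $A(0)$ equals the total length $|\Omega|$.
\item The support of $A$ is $[-L,L]$, where $L=b_m-a_1$ is the diameter of $\Omega$.
\item The breakpoints of $A$ lie in the difference set of the endpoint set $E=\{a_k,b_k\}$.
\end{enumerate}
More precisely, the second distributional derivative is a sum of point masses:
\[
A''=\sum_{e,e'\in E}\varepsilon_e\varepsilon_{e'}\,\delta_{e-e'},
\]
with $\varepsilon_{a_k}=1$ and $\varepsilon_{b_k}=-1$, up to a global sign and reflection convention. This holds because $f'=\sum_k(\delta_{a_k}-\delta_{b_k})$ and $A''$ is the autocorrelation of $f'$ with a sign flip. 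The data is therefore the signed difference measure
\[
\mu_f=\sum_{e,e'}\varepsilon_e\varepsilon_{e'}\,\delta_{e-e'}.
\]
Recovering $f$ up to $\sim$ amounts to recovering the signed point set $(E,\varepsilon)$ from this measure, up to translation and reflection.

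\textbf{Case $m=1$.} Here $\mu_f=2\delta_0-\delta_{\ell}-\delta_{-\ell}$, with $\ell=b_1-a_1$. For a general $g\in\mathcal I_n$, the diagonal terms contribute $2n\,\delta_0$. The cross terms are supported at nonzero points, and the extreme points $\pm L$ always carry the mass $-1$, coming from $a_1$ and $b_n$, which cannot cancel. The positive masses at nonzero differences come from pairs of same-type endpoints, for example $a_1,a_2$, and the extreme such difference cannot cancel either. Rather than tracking cancellations in general, we use a cleaner invariant: $A$ is concave on $[0,L]$ if and only if $\Omega$ is an interval. Indeed, if $n\ge2$, the positive mass at the largest difference $a_n-a_1>0$ cannot be cancelled, since any difference equal to it coming from an $(a,b)$ pair would have to satisfy $b_j-a_i=a_n-a_1$. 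Avoiding this case analysis, I would instead use a length count: $A_g=A_f$ gives $|\Omega_g|=\ell$ and diameter $L_g=\ell$. So $\Omega_g$ is a set of measure equal to its diameter, hence an interval up to null sets, and $g\sim f$. This argument is clean and needs no cancellation analysis.

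\textbf{Case $m=2$.} Write $\Omega=[0,\ell_1]\cup[\ell_1+d,\ell_1+d+\ell_2]$. The first step is to show that $A_g=A_f$ forces $g\in\mathcal I_2$. The candidate argument goes as follows.
\begin{itemize}
\item The gap structure is detected by $L-|\Omega|$, which is the total gap length.
\item Near $t=L$, $A(t)$ equals the overlap of the leftmost and rightmost pieces. Its slope just below $L$ is $-1$, and its first breakpoint below $L$ occurs at $L-\min(\ell_{\rm first},\ell_{\rm last})$.
\item One also uses the slope of $A$ at $0^+$, namely $A'(0^+)=-n$, where $n$ is the number of intervals: as $t\downarrow0$, the overlap $|\Omega\cap(\Omega-t)|$ loses $t$ per interval.
\end{itemize}
So $A'(0^+)$ recovers $m$ directly, and $g\in\mathcal I_2$ follows immediately.

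With $m=2$ fixed, the support of $\mu$ at positive arguments is, generically, a subset of
\[
\{\ell_1,\ \ell_2,\ d,\ \ell_1+d,\ \ell_2+d,\ L=\ell_1+d+\ell_2\}
\]
with known signs: $-\ell_1$, $-\ell_2$, $-d$ (from the pair $b_1,a_2$), $+(\ell_1+d)$, $+(\ell_2+d)$, and $-L$. From $|\Omega|=\ell_1+\ell_2$ and $L$ we get $d$. The remaining task is to determine the unordered pair $\{\ell_1,\ell_2\}$; the order does not matter, since it corresponds to reflection. The positive masses sit exactly at $\ell_1+d$ and $\ell_2+d$, and positive mass can only be cancelled by coincidence with negative points. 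The relations $\ell_1+d=\ell_2$ and $\ell_2+d=\ell_1$ cannot both hold, so at least one positive atom survives, say at $\ell_i+d$. This gives $\ell_i$, and then $\ell_j=|\Omega|-\ell_i$.

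\textbf{Main obstacle.} The delicate step is handling coincidences where a positive atom cancels against a negative one. If the $+\delta_{\ell_1+d}$ atom cancels, then $\ell_1+d=\ell_2$, and the surviving atom plus the sum constraint still determine the pair consistently. I will check the few coincidence subcases ($\ell_1=\ell_2$, $\ell_1+d=\ell_2$, $d=\ell_i$) directly, using the total mass and the first moment of $\mu$ restricted to $(0,L)$ as backup invariants.
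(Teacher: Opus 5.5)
Your argument is correct, and it takes a genuinely different route from the paper's.

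\textbf{How the paper proceeds.} It stays on the Fourier side. It expands $|\sum_k(e^{\mathrm{i}b_kt}-e^{\mathrm{i}a_kt})|^2$ as $2m+2\sum_{i<j}(-1)^{i+j}\cos((c_i-c_j)t)$. Linear independence of cosines (Chebyshev systems) then gives $m=m'$ and matches the top frequency $b_2$, which is the diameter. Finally it compares Taylor coefficients. The coefficient $\alpha_1=|\Omega|^2$ gives the total length. The coefficient $\alpha_2$, together with monotonicity of an explicit quartic $h$ on $(0,\gamma/2]$, pins down $b_1$; this step needs the normalization that the longer interval is on the right.

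\textbf{How your route differs.} Your measure $\mu$ is the inverse transform of that cosine sum. So $A'(0^+)=-n$, $A(0)=|\Omega|$ and $L=\max\mathrm{supp}\,A$ play the roles of the paper's constant term, $\alpha_1$, and Substep 1. The real difference is the last step. Instead of a fourth-moment computation, you use the sign structure: same-type endpoint differences give positive atoms, mixed ones give negative atoms. Positive atoms can only sit at $\ell_1+d$ and $\ell_2+d$, and they cannot both cancel. This uses the left/right sign distinction the paper highlights at the start of Section 2 in a direct, combinatorial way, and it is shorter and more transparent. Your measure-equals-diameter argument for $m=1$ also does not need $m=m'$ first. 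The paper's moment method, in exchange, is insensitive to coincidences among atoms and needs no cancellation bookkeeping.

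\textbf{The ``main obstacle'' is already resolved.} Make the step explicit as follows. Take any $p>0$ where $\mu_f=\mu_g$ has positive mass. Since $d'=L-|\Omega|=d$, you get $p-d\in\{\ell_1,\ell_2\}\cap\{\ell_1',\ell_2'\}$. Together with $\ell_1+\ell_2=\ell_1'+\ell_2'$, this forces the unordered pairs to agree. No subcase checks or moment backups are needed. Survival of at least one positive atom holds because $\ell_1+d$ exceeds $\ell_1$ and $d$ and is less than $L$. So the only negative atom that can meet it is the one at $\ell_2$, and the same holds symmetrically for $\ell_2+d$.

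\textbf{Minor corrections.}
\begin{enumerate}[(i)]
\item $\mathrm{supp}\,A$ need not be all of $[-L,L]$; two short, distant intervals leave a gap. Only $\max\mathrm{supp}\,A=L$ is used, so nothing breaks.
\item The sign is $A''=-\mu$.
\item Drop the concavity remark in the $m=1$ case, since its justification is false. The positive atom at $a_n-a_1$ can cancel: for $[0,1]\cup[2,4]$ one has $a_2-a_1=b_2-a_2$. Moreover, positive atoms of $\mu$ are concave kinks of $A$ anyway. The correct reason $A$ is not concave for $n\ge 2$ is that $\mu$ has total mass $-(n-1)$ on $(0,L)$.
\end{enumerate}
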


Moreover, the stability of recovering $f \in \mathcal{I}_m$ with $m \leq 2$ from $|\widehat{f}|$ is established in Theorem~\ref{thm:stable}. For two intervals $I_1, I_2 \subset \mathbb{R}$, define their distance by

$$
\dist(I_1, I_2) := \min_{x \in I_1,\, y \in I_2} |x - y|,
$$
and denote by $\conv(I_1, I_2)$ their convex hull, i.e., the smallest interval containing both $I_1$ and $I_2$. The length of an interval $I \subset \mathbb{R}$ is denoted by $|I|$. For $f, g \in \mathcal{I}$, we define the distance between $f$ and $g$ by
$$
{\dist(f,g) := \min_{\tilde{f} \in [f],\, \tilde{g} \in [g]} \|\tilde{f} - \tilde{g}\|_1},
$$
which is exactly the length of the non-overlapping parts of the corresponding intervals of $f$ and $g$, minimized over all representatives in the equivalence classes $[f]$ and $[g]$.

\begin{thm}\label{thm:stable}
\begin{enumerate}[(i)]
\item Assume that $f=\mathds{1}_{I}\in \cI_1, g= \mathds{1}_{\mytilde{I}}\in \cI_1$ where $I, \mytilde{I}\subset {\mathbb R}$ are intervals.
For any given $\epsilon > 0$,  if $\||\widehat{f}|^2-|\widehat{g}|^2\|_2 \leq  \epsilon,$ then 
\[
\dist(f,g)\leq \sqrt{\frac{3}{4\pi}} \cdot \frac{\epsilon}{\sqrt{\abs{I}}}. 
\]

\item 
Assume that
\[
f = \mathds{1}_{I_1} + \mathds{1}_{I_2} \in \mathcal{I}_2,
\qquad
g = \mathds{1}_{\mytilde{I}_1} + \mathds{1}_{\mytilde{I}_2} \in \mathcal{I}_2,
\]
where \(I_1, I_2, \mytilde{I}_1, \mytilde{I}_2 \subset \mathbb{R}\) are intervals such that
$
|I_1| \le |I_2|
\,\text{and}\,
|\mytilde{I}_1| \le |\mytilde{I}_2|.
$
Suppose that
\[
\bigl\||\widehat{f}|^2 - |\widehat{g}|^2\bigr\|_2 \le \epsilon
\]
for some \(\epsilon > 0\) satisfying
\begin{equation}
\label{eqn: epsilon}
0 < \epsilon^2 < \min\left\{\frac{\pi\cdot c^3}{12}, \frac{\pi\cdot c_0^2\cdot c}{576}, \frac{\pi\cdot c_0^3}{3456}\right\},
\end{equation}
where \(c\) and \(c_0\) are positive constants such that
\[
c \le \min\{|I_1|, |\mytilde{I}_1|, |I_2|, |\mytilde{I}_2|\}
\quad\text{and}\quad
c_0 \le \min\{\dist(I_1, I_2), \dist(\mytilde{I}_1, \mytilde{I}_2)\}.
\]
Then we have
\begin{equation}\label{eq:I1I2}
\begin{aligned}
&\bigl||I_1| - |\mytilde{I}_1|\bigr|
  \lesssim \frac{\epsilon}{\sqrt{\min\{c_0,c\}}},\qquad
\bigl||I_2| - |\mytilde{I}_2|\bigr|
  \lesssim \frac{\epsilon}{\sqrt{\min\{c_0,c\}}},\\
&\bigl|\dist(I_1, I_2) - \dist(\mytilde{I}_1, \mytilde{I}_2)\bigr|
  \lesssim \frac{\epsilon}{\sqrt{\min\{c_0,c\}}}.
  \end{aligned}
\end{equation}
In particular, this implies
\begin{equation}
\label{eqn: distance_result1}
\dist(f,g) \lesssim \frac{\epsilon}{\sqrt{\min\{c_0,c\}}}.
\end{equation}

\end{enumerate}
\end{thm}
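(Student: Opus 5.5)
The plan is to move everything to the spatial side using Plancherel. Since $|\widehat{f}|^2=\widehat{f*\tilde f}$ with $\tilde f(x)=f(-x)$, we have
\[
\bigl\||\widehat f|^2-|\widehat g|^2\bigr\|_2=\|A_f-A_g\|_2,\qquad A_f(y):=|\Omega\cap(\Omega+y)|,
\]
where $A_f$ is the covariogram (autocorrelation) of $f$. For a union of intervals, $A_f$ is even, continuous and piecewise linear. Its breakpoints lie in the difference set $\{\pm(b_k-a_j),\pm(a_k-a_j),\dots\}$, and at each breakpoint the slope jumps by a signed multiple of $1$.

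\textbf{Part (i).} For $f=\mathds 1_I$ with $|I|=L$ we have $A_f(y)=(L-|y|)_+$, and $A_g$ is the analogous triangle with half-width $L'$. Assume $L'\ge L$ and set $\delta=L'-L$. Then $A_g-A_f=\delta$ on $[-L,L]$, so
\[
\|A_f-A_g\|_2^2\ \ge\ 2L\delta^2+2\int_0^{\delta}t^2\,dt\ \ge\ 2L\delta^2 .
\]
Since $\dist(f,g)=\delta$ after translating the intervals to share an endpoint, this already gives a bound of order $\epsilon/\sqrt{L}$. The constant $\sqrt{3/(4\pi)}$ suggests the paper's normalisation differs. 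Most likely they compute $\int|\,|\widehat f|^2-|\widehat g|^2|^2$ directly from $|\widehat{\mathds 1_I}|^2=\sin^2(\pi L\omega)/(\pi\omega)^2$, possibly keeping only part of the integral. In that case I would redo the exact computation of $\|A_f-A_g\|_2^2$, including the cubic term, and match the constant, using $\sqrt{\abs I}$ with the smaller length.

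\textbf{Part (ii).} Write $f=\mathds 1_{I_1}+\mathds 1_{I_2}$ with $|I_1|=\ell_1\le\ell_2=|I_2|$ and gap $d$. The autocorrelation splits as
\[
A_f=T_{\ell_1}+T_{\ell_2}+C(\cdot-s)+C(-\cdot-s),
\]
where $T_\ell(y)=(\ell-|y|)_+$, $s=d+\tfrac{\ell_1+\ell_2}{2}$ is the distance between the midpoints, and $C=\mathds 1_{I_1}*\mathds 1_{-I_2}$ recentred is a trapezoid with plateau height $\ell_1$ and slopes on $[\pm\frac{\ell_2-\ell_1}{2},\pm\frac{\ell_1+\ell_2}{2}]$. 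The structure on $[0,\infty)$ is as follows. Near the origin, $A_f(0)=\ell_1+\ell_2$ and the slope is $-2$ on $[0,\ell_1]$ and $-1$ on $[\ell_1,\ell_2]$ (when $d$ is large). The cross term then lives on $[d,d+\ell_1+\ell_2]$. The assumptions $\ell_i\ge c$, $d\ge c_0$ and the smallness \eqref{eqn: epsilon} will be used to ensure these features are separated by windows of width $\gtrsim\min\{c,c_0\}$ in both $A_f$ and $A_g$. The difficulty is that when $d<\ell_2$ the pieces overlap.

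The key step is an elementary lemma, proved by direct integration: if two piecewise linear functions differ at a kink location by $\eta$ in a region where one slope jumps by $1$ and the other is linear, then the $L^2$ difference over a window of width $w$ is at least $\gtrsim \eta\sqrt{\min(\eta,w)}\cdot$const. Apply it successively.
\begin{enumerate}[(a)]
\item Compare $A_f(0)=\ell_1+\ell_2$ with $A_g(0)$ using the slope bound $|A'|\le 2$. A large height difference would persist on an interval of length $\sim|\Delta|$ and force $\|A_f-A_g\|_2^2\gtrsim|\Delta|^3$. The bound $\epsilon^2<\pi c^3/12$ then guarantees $|\Delta|<c$, which puts us in the linear regime and gives $|\Delta|\lesssim\epsilon/\sqrt c$.
\item Compare the first kink at $\ell_1$ versus $\ell_1'$, which gives the bound on $\bigl||I_1|-|\mytilde I_1|\bigr|$. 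Combined with (a), this also gives the bound for $I_2$.
\item Compare the left endpoint of the cross-term support, located at $d$ versus $d'$. This bounds $|d-d'|$; the conditions with $c_0^2c$ and $c_0^3$ ensure the kinks are not mistakenly matched to wrong partners.
\end{enumerate}
Finally, realise $g$ aligned with $f$: place $\mytilde I_1$ on $I_1$ from the left. Then $\dist(f,g)\le|\ell_1-\ell_1'|+|d-d'|+|\ell_2-\ell_2'|$, which is the bound \eqref{eqn: distance_result1}.

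The main obstacle is step (c) and the matching in general, because kinks of $A_f$ may coincide or interleave (for example $d$ close to $\ell_1$ or $\ell_2-\ell_1$). I would handle this by examining slope-jump magnitudes within windows of width $\min\{c,c_0\}/4$, using the rule that a mismatched jump costs $\gtrsim\min\{c,c_0\}^{3}$ in squared $L^2$, which is excluded by \eqref{eqn: epsilon}.
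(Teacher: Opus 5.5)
Your part (i) follows the paper's route: Plancherel, then the explicit triangle computation. Your identity $\bigl\||\widehat f|^2-|\widehat g|^2\bigr\|_2=\|A_f-A_g\|_2$ is the right one for the convention $\widehat f(\omega)=\int f(x)e^{-2\pi\mathrm{i}\omega x}\,dx$ that the paper fixes. Two things are off.

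First, ``assume $L'\ge L$'' is not a free normalisation, because the bound is stated in terms of $|I|=L$. You need the exact value $\|A_f-A_g\|_2^2=\delta^2\bigl(\tfrac23 L_{\max}+\tfrac43 L_{\min}\bigr)\ge\tfrac23|I|\,\delta^2$. The cubic term is what handles the case $L'<L$; bounding by the smaller length proves a different statement.

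Second, you cannot ``match'' the constant $\sqrt{3/(4\pi)}$. The paper obtains it from $\|A_f-A_g\|_2^2\le\epsilon^2/(2\pi)$, which is the Plancherel constant of the $e^{-\mathrm{i}\omega x}$ convention. The paper's own remark computing $2\sqrt6/3$ for $g_c$ uses your normalisation. With the correct identity, the argument gives $\dist(f,g)\le\sqrt{3/2}\,\epsilon/\sqrt{|I|}$, and this constant is sharp (let $L'\to0$). The stated constant is false: for $f=\mathds 1_{[0,1]}$ and $g=\mathds 1_{[0,1/2]}$ one has $\epsilon=1/\sqrt3$ and $\dist(f,g)=1/2>1/(2\sqrt\pi)=\sqrt{3/(4\pi)}\,\epsilon$.

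In part (ii) the gap is your step (b), and it cannot be closed. The length bounds in \eqref{eq:I1I2} and the bound \eqref{eqn: distance_result1} are false near $|I_1|=|I_2|$. Take $f=\mathds 1_{[0,\ell-\eta]}+\mathds 1_{[\ell-\eta+d,\,2\ell+d]}$ and $g=\mathds 1_{[0,\ell]}+\mathds 1_{[\ell+d,\,2\ell+d]}$ with $d>2\eta$.

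The $+1$ kinks of $A_f$ at $\ell\mp\eta$ and its $-1$ kinks at $\ell+d\mp\eta$ coalesce into double kinks of $A_g$ at $\ell$ and $\ell+d$. All other kinks agree, so $A_f-A_g=T_\eta(\cdot-\ell)-T_\eta(\cdot-\ell-d)$ plus its reflection. Hence $\bigl\||\widehat f|^2-|\widehat g|^2\bigr\|_2^2=\tfrac83\eta^3$, while $\bigl||I_1|-|\mytilde{I}_1|\bigr|=\eta$ and $\dist(f,g)=2\eta$. With $c=\ell/2$ and $c_0=d$ fixed and $\eta\to0$, every hypothesis holds, yet $\eta\lesssim\eta^{3/2}$ fails.

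Slope-jump magnitudes cannot separate two coalescing kinks of the same sign. Your own lemma, for $\eta\le w$, only gives $\eta\lesssim\epsilon^{2/3}$, which is exactly the rate this example attains.

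The paper's Step 3 breaks at the same point. Its window $[\mytilde{b}_2-\mytilde{b}_1-5c_0/8,\,\mytilde{b}_2-\mytilde{b}_1-c_0/2]$ lies on the plateau $[\mytilde{a}_2,\mytilde{b}_2-\mytilde{b}_1]$ of the cross term only if $|\mytilde{I}_2|-|\mytilde{I}_1|\ge5c_0/8$, which is not assumed. The thresholds in \eqref{eqn: epsilon} also inherit the $2\pi$ error.

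After correcting the thresholds, the following still hold with a Lipschitz rate:
\begin{enumerate}[(1)]
\item $|I_1|+|I_2|$. On $[0,\min\{c,c_0\}]$ both autocorrelations have slope $-2$, so their difference is constant there. Your step (a) therefore needs no localisation, and its rate is $\epsilon/\sqrt{\min\{c,c_0\}}$ rather than $\epsilon/\sqrt c$.
\item The total span, read off from the outermost edge of slope $-1$.
\item $\dist(I_1,I_2)$, as the span minus $|I_1|+|I_2|$.
\end{enumerate}
Your step (c) should be replaced by this subtraction. The $+1$ kink at $d$ is likewise indistinguishable from the $+1$ kinks at $|I_1|$ and $|I_2|$ when they are close.
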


\begin{rem}
{ Although the stability of functions depending on the modulus of the Fourier transform is studied  in \cite{stable_fourier}, no precise characterization is provided for specific function classes. In this regard, a stability estimate for the endpoints of the support interval of an indicator function  $f\in \cI_2$ is established in the present work.  Here (\ref{eq:I1I2}) implies that the endpoints of $I_1$ are 
close to those of $\widetilde{I}_1$, and simultaneously the endpoints 
of $I_2$ are close to those of $\widetilde{I}_2$. A straightforward 
observation shows that (\ref{eq:I1I2}) implies 
(\ref{eqn: distance_result1}). However, (\ref{eq:I1I2}) is 
strictly stronger than \eqref{eqn: distance_result1}, as the converse 
does not hold in general. We illustrate this distinction with the 
following concrete example.\\
\indent Consider $I_1 = [0, 1-c]$, $I_2 = [1, 2]$, 
$\mytilde{I}_1 = [0, c]$, and $\mytilde{I}_2 = [2c, 2]$ for some 
$c \in (0, 0.1)$, and let $f = \mathds{1}_{I_1} + \mathds{1}_{I_2}$ 
and $g = \mathds{1}_{\mytilde{I}_1} + \mathds{1}_{\mytilde{I}_2}$. 
Setting $\epsilon = c^{3/2}$, we have $\dist(f, g) = 2c = 2\epsilon / 
\sqrt{c}$. Nevertheless,
\[
\bigl||I_1| - |\mytilde{I}_1|\bigr| = 1 - 2c \gg 2\epsilon / 
\sqrt{c},
\]
when $c$ is sufficiently small enough. 
%which is not of the same order as $\epsilon / \sqrt{c}=c$ as 
%$c \to 0$.
 }
\end{rem}
\begin{comment}
\begin{rem}
The stability result is restricted to small $\epsilon$ because when $\epsilon$ is large, the parameters of the indicator function cannot be estimated stably.
  For example, take 
\[
f(x)=\mathds{1}_{[a_1,b_1]}(x)+\mathds{1}_{[a_2,b_2]}(x),\quad\text{and}\quad g_c(x)=\mathds{1}_{[\widetilde{a}_1,\mytilde{b}_1]}(x)+\mathds{1}_{[\mytilde{a}_2+c,\mytilde{b}_2+c]}(x),
\]
such that $a_1<b_1<a_2<b_2$, $\mytilde{a}_1<\mytilde{b}_1<\mytilde{a}_2<\mytilde{b}_2$, and
\[
\max\{b_2-a_2,b_1-a_1,\mytilde{b}_2-\mytilde{a}_2,\mytilde{b}_1-\mytilde{a}_1\}<a_2-b_1<b_2-a_1<\mytilde{a}_2-\mytilde{b}_1.
\]
Based on the Fourier transform property in (\ref{eqn: f_property}) and the formulation in (\ref{eqn: f_formulation_origin}), we have
\[
\||\widehat{f}|^2-|\widehat{g}_0|^2\|_2=\||\widehat{f}|^2-|\widehat{g}_c|^2\|_2
\]
for any $c\geq 0$. Let $\epsilon:=\||\widehat{f}|^2-|\widehat{g}_0|^2\|_2$. Then the interval spacing parameter $c$ cannot be estimated stably based on $\epsilon$ alone.
\end{rem}
\end{comment}
\begin{rem}
In part (ii) of Theorem \ref{thm:stable}, the stability result is restricted to the small-$\epsilon$ regime specified in (\ref{eqn: epsilon}), since the location parameters of the indicator function cannot be estimated stably when $\epsilon$ is large.
For example, let
\[
f(x)=\mathds{1}_{[0,1]}(x)+\mathds{1}_{[3,4]}(x),
\qquad
g_c(x)=\mathds{1}_{[0,1]}(x)+\mathds{1}_{[6+c,7+c]}(x),
\]
for some $c\geq 0.$ Using the Fourier transform property in \((\ref{eqn: f_property})\), we obtain
\[
\left\|\abs{\widehat f}^2-\abs{\widehat {g_c}}^2\right\|_2
=
\left\|\abs{\widehat f}^2-\abs{\widehat {g_0}}^2\right\|_2
=
\frac{2\sqrt{6}}{3}=:\epsilon
\]
for every \(c\geq 0\). Nevertheless, as \(c\to\infty\), the second interval of
\(g_c\) moves arbitrarily far away from that of \(f\). Hence the Fourier
magnitude data remain at a fixed distance from those of \(f\), while the
underlying location parameter becomes unbounded. This shows that such a
stability statement in (\ref{eq:I1I2}) cannot hold for large \(\epsilon\).
\end{rem}

\begin{rem}
Although the unique recovery result for the two-interval indicator function case can be obtained by setting $\epsilon=0$ in Theorem
\ref{thm:stable}, the result differs slightly from the presentation in Theorem \ref{thm: unique_determine}. Specifically, Theorem \ref{thm: unique_determine} does not assume the number of intervals for $g$, whereas Theorem \ref{thm:stable} treats the number of intervals for $g$ as prior information.
\end{rem}

Besides, the following theorem shows that, for $f \in \mathcal{I}_m$
  with $m \ge 3$, uniqueness of recovery from $\abs{\widehat{f}}$
  cannot, in general, be guaranteed.

\begin{thm}\label{th:fanli}
For any integer $m\geq 3$, there exist functions  $f_m,g_m\in \cI_m$ such that $\abs{\widehat{f_m}(\omega)}=\abs{\widehat{g_m}(\omega)}$ for all $\omega\in {\mathbb R}$ but $f_m\not\sim g_m$.
\end{thm}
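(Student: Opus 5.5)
The plan is to use the standard convolution mechanism behind homometric sets. If a nonnegative function $f$ factors as a convolution $f=h\ast k$ of two finite measures or functions, then
\[
\widehat{f}=\widehat h\,\widehat k ,\qquad \widehat{h\ast k(-\cdot)}=\widehat h\,\conj{\widehat k}.
\]
Since $|\widehat k|=|\conj{\widehat k}|$, the function $g:=h\ast k(-\cdot)$ satisfies $|\widehat g|=|\widehat f|$ on $\mathbb R$. I will take $h=\sum_{j}\delta_{p_j}$, a finite sum of point masses, and $k=\mathds 1_K$ with $K$ a finite union of closed intervals. Then $f=\sum_j \mathds 1_{K+p_j}$ and $g=\sum_j\mathds 1_{-K+p_j}$. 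The parameters must be chosen so that the translates $K+p_j$, and also the translates $-K+p_j$, are pairwise almost disjoint, meeting at most at endpoints. Then $f$ and $g$ are indicator functions a.e. Touching endpoints merge, so I can tune the number of resulting intervals to be exactly $m$.

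Two symmetry pitfalls must be avoided. If $h$ is symmetric, i.e.\ $h(-x)=h(x+c)$, then $g$ is a reflection of $f$. If $K$ is symmetric, then $g$ is a translate of $f$. For instance, $h=\delta_0+\delta_4$ and $K=[0,1]\cup[2,4]$ give interval lengths $(1,3,2)$ for $f$ and $(2,3,1)$ for $g$, which are equivalent. So I will take $h=\delta_0+\delta_p+\delta_q$ with $p\neq q-p$, and $K=[0,\ell_1]\cup[r,r+\ell_2]$ with $\ell_1\neq\ell_2$. Uniqueness fails only when both factors are asymmetric.

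For $m=3$, I will choose numerical values (rational, searched by hand) so that the six pieces of $f$ merge pairwise into three intervals, and likewise the six pieces of $g$. Non-equivalence is then checked by comparing invariants. The ordered list of interval lengths and gaps of $f$ must differ from that of $g$, and also from its reversal. Since $\sim$ preserves the multiset of lengths together with their order up to reversal, this suffices.

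For general $m\ge3$, I will use a family of this form. One option is to enlarge $h$ to $\delta_0+\delta_p+\delta_q+\delta_{s_1}+\dots$, or $K$ to more intervals, with generic spacings so that each added term contributes exactly one new interval without merging. Another option is to place extra copies of the same construction at widely separated positions in a way compatible with the factorization. In either case, the length sequence must remain asymmetric so that $f_m\not\sim g_m$.

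The main obstacle is the bookkeeping of the merging. I need the counts of intervals of $f$ and of $g$ both to equal exactly $m$. This requires that the overlaps remain measure-zero for both $K$ and $-K$ simultaneously, and that the resulting configurations are not reflections of each other. I would first try small integer configurations and verify by direct listing. The case $m=3$ is delicate because three-point homometric sets do not exist, so unequal lengths must be exploited. I would then generalize by a pattern whose extra terms are generic, so that no new coincidences arise.
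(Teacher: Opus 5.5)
\textbf{Verdict: there is a genuine gap.} Your convolution mechanism is correct, but no value of $m$ is actually established.

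\textbf{Relation to the paper.} Your mechanism is the paper's in other clothing. The identity $Y=R_1R_2$, $X=z^{\tau}R_1(z)R_2(1/z)$ with $R_1=(z-1)Q$ used in Section~\ref{sec: sec5} says exactly this. One function is $h\ast k$ and the other is a translate of $h\ast k(-\cdot)$. Here $h=\sum_j q_j\delta_j$ and $k=\mathds{1}_{[0,1]}\ast\sum_j r_j\delta_j$ are read off from $Q=\sum_j q_jz^j$ and $R_2=\sum_j r_jz^j$. The paper's $f_3,g_3$ are literally your ansatz with $h=\delta_0+\delta_2+\delta_7$ and $K=[0,2]\cup[4,5]$. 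For $m=4,5$ the paper uses a signed $h$, resp.\ a signed $k$.

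\textbf{The gap.} You never exhibit a single inequivalent pair, and for an existence theorem the construction \emph{is} the proof. Moreover, the passage to general $m$ fails as described. Without merging, a new atom of $h$ adds a whole translate of $K$, i.e.\ at least two intervals. A new interval of $K$ adds one piece per atom of $h$, i.e.\ at least three. So ``each added term contributes exactly one new interval without merging'' is impossible in your own setup. Widely separated copies, $h\mapsto h\ast(\delta_0+\delta_L)$, multiply $m$ rather than reaching every integer.

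\textbf{How your route can be completed.} Done this way, it gives a shorter proof than the paper's.
First, your ``main obstacle'' disappears.
\begin{itemize}
\item Overlaps: $|(-K+p_i)\cap(-K+p_j)|=|(K+p_i)\cap(K+p_j)|$, so the translates of $-K$ are almost disjoint exactly when those of $K$ are.
\item Counts: the interval counts of $f$ and $g$ agree automatically. Indeed $m$ is half the constant term of $|\sum_k(e^{\ri b_kt}-e^{\ri a_kt})|^2$, as in the proof of Theorem~\ref{thm: unique_determine}.
\end{itemize}

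Next, the merging bookkeeping is explicit. Write $K=[0,\ell_1]\cup[\ell_1+\gamma,L]$ with $L:=\ell_1+\gamma+\ell_2$ and $\ell_1\neq\ell_2$.
\begin{itemize}
\item A positive difference $d$ of atoms of $h$ is admissible iff $d\in[\max\ell_i,\gamma]\cup[L,\infty)$.
\item An admissible $d$ produces a merge iff $d\in\{\max\ell_i,\gamma,L\}$.
\end{itemize}

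Base case $m=4$: take $K=[0,1]\cup[5,7]$ and $h=\delta_0+\delta_2+\delta_9$. Then $f=\mathds{1}_{[0,1]}+\mathds{1}_{[2,3]}+\mathds{1}_{[5,10]}+\mathds{1}_{[14,16]}$. Also $g$ is a translate of $\mathds{1}_{[0,4]}+\mathds{1}_{[6,7]}+\mathds{1}_{[8,11]}+\mathds{1}_{[15,16]}$. The length multisets differ, so $f\not\sim g$. Base case $m=3$ is the paper's pair above.

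Inductive step: adjoin to $h$ an atom $s$ far to the right.
\begin{itemize}
\item $f$ and $g$ gain a copy of $K$, resp.\ $-K$, behind a common new gap $G$ exceeding all lengths and gaps present. So $m$ rises by $2$.
\item A translation is excluded, because the blocks before $G$ are the old inequivalent $f,g$.
\item A reflection is excluded, because $G$ follows $m\ge 3$ intervals in $f$ but only $2$ in the reversal of $g$.
\end{itemize}
Induction from $m=3,4$ then covers all $m\ge 3$ using nonnegative factors only. This replaces the signed factors and the three arithmetic families used in the paper.
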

{Constructing explicit examples for arbitrary $m$ is non-trivial; 
the analysis relies crucially on certain special properties arising 
from $f_m \not\sim g_m$, and we refer the reader to 
Section~\ref{sec:disF} for details. That section may also be read 
independently. Furthermore, imposing additional structural 
conditions on the functions in $\mathcal{I}_m$, unique recovery can 
still be guaranteed. Motivated by the \emph{collision-free 
condition} from the \emph{turnpike problem}, we introduce in 
Section~\ref{sec:sep} a sufficient condition on the intervals 
$I_1, \ldots, I_m$, termed the \emph{separation condition} 
(Definition~\ref{de:sep}), which ensures the unique recovery of 
$f = \sum_{j=1}^{m} \mathds{1}_{I_j}$ from $|\widehat{f}|$; 
see Theorem~\ref{thm: separation_condition} for the precise statement.
}
  
  \begin{comment}
Though $f \in \mathcal{I}$ cannot be uniquely determined by $|\widehat{f}|$, we can still extract some geometric information about $f$.  
  \textcolor{red}{We may add a new theorem which state which geometry information we can obtain by $\abs{\widehat{f}}$.}
  \textcolor{red}{
  \begin{thm}
  Assume that $f\in \cI$ in the form of $f=\sum_{j=1}^m \mathds{1}_{I_j}$.  We can obtain the following information by  $\abs{\widehat{f}}$:
  \begin{enumerate}
  \item the number of intervals, i.e., $m$.
  \item the length of $\conv(I_1,\ldots,I_m)$
  \item $\max\{\abs{I_1},\ldots,\abs{I_m}\}$
  \item $\max\{\dist(I_j, I_{j+1}), j=1,\ldots,m-1\}$...
  \end{enumerate}
  \end{thm}
   }
\end{comment}
\subsection{Related work}

\subsubsection{The covariogram problem }
{As mentioned earlier, for a convex domain $\Omega \subset \mathbb{R}^d$, 
the problem of reconstructing $\Omega$ from $|\widehat{\mathds{1}_\Omega}|$ 
is equivalent to the covariogram problem in convex geometry. Indeed, 
by Parseval's identity, the covariogram $g_\Omega(y) = {\rm vol}(\Omega \cap (\Omega + y))$ 
with $y\in \mathbb{R}^d$
satisfies $\widehat{g_\Omega} = |\widehat{\mathds{1}_\Omega}|^2$, so 
$g_\Omega$ is uniquely determined by $|\widehat{\mathds{1}_\Omega}|$, 
and vice versa. Consequently, recovering $\Omega$ from 
$|\widehat{\mathds{1}_\Omega}|$ is equivalent to recovering $\Omega$ 
from its covariogram $g_\Omega$.
There is an extensive literature devoted to the case where $\Omega$ 
is a convex body (see \cite{bianchi1}). We briefly summarize the 
current state of the art, where ``determined'' always means that 
$\Omega$ is uniquely determined by $|\widehat{\mathds{1}_\Omega}|$ 
among all convex bodies, up to translation and reflection through 
the origin.\\
\indent When $\Omega \subset \mathbb{R}^2$ is convex, Averkov and Bianchi 
proved in \cite{omega1} that $\Omega$ is always determined, thereby 
confirming \textit{Matheron's conjecture} that every 
convex body in $\mathbb{R}^2$ is determined by its covariogram among 
all convex bodies, up to translation and reflection through the origin.
} For $\Omega \subset \mathbb{R}^3$, Bianchi proved in \cite{omega2} that every three-dimensional convex polyhedron is determined. In dimensions $d \geq 4$, Bianchi also showed by explicit construction that there exist convex polytopes $\Omega \subset \mathbb{R}^d$
  that are not determined \cite{omega3}.  However, it remains an open problem whether all convex bodies $\Omega \subset \mathbb{R}^3$
  are determined.

\subsubsection{The classical Fourier phase retrieval}

The classical Fourier phase retrieval problem aims to recover a compactly supported function $f\in L^2(\mathbb{R}^d)$
from the magnitude $\abs{\widehat{f}}$. In \cite[Theorem 4.6]{Review}, a general relationship between functions $f$ and $g$ is established under the assumption that they are compactly supported elements of $L^2(\mathbb{R}^d)$
satisfying $\abs{\widehat{f}} = \abs{\widehat{g}}$.

\begin{thm}[Theorem 4.6 in \cite{Review}]\label{th:fourierpr}
Let $f,g\in L^2(\mathbb{R}^{d})$ be compactly supported and let $F,G$ denote the Fourier-Laplace transform of $f$ and $g$ respectively. Then $\abs{\widehat{f}}=\abs{\widehat{g}}$ if and only if  there exists a factorization $G=G_1\cdot G_2$, a constant $\gamma$ with $|\gamma|=1$, and an entire function $Q$ where $Q|_{\mathbb{R}^{d}}$ is real-valued such that 
\[
F(z)=\gamma \cdot e^{\mathrm{i}Q(z)}\cdot G_1(z)\cdot \overline{G_2(\overline{z})}. 
\]
\end{thm}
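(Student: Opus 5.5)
The natural route is to pass from the modulus identity on $\mathbb{R}^d$ to an identity between entire functions on $\mathbb{C}^d$, and then split the zero set of $G$. For an entire function $H$ on $\mathbb{C}^d$ write $H^*(z):=\overline{H(\overline{z})}$; this is again entire, and $H^*(x)=\overline{H(x)}$ for $x\in\mathbb{R}^d$. By Paley--Wiener, $F$ and $G$ are entire functions of exponential type.

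\textbf{The ``if'' direction.} For $x\in\mathbb{R}^d$ we have $|\gamma|=1$ and $|e^{\mathrm{i}Q(x)}|=1$, since $Q(x)$ is real. Also $|G_2^*(x)|=|G_2(x)|$. Hence
\[
|\widehat f(x)|=|F(x)|=|G_1(x)|\,|G_2(x)|=|G(x)|=|\widehat g(x)| .
\]

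\textbf{Reducing ``only if'' to a divisor problem.} Assume $|\widehat f|=|\widehat g|$. Then $FF^*=GG^*$ on $\mathbb{R}^d$. Both sides are entire, and $\mathbb{R}^d$ is a totally real set of maximal dimension, so the identity principle gives
\[
FF^*=GG^* \quad\text{on } \mathbb{C}^d.
\]
The case $F\equiv 0$ is trivial, so assume otherwise. Consider the divisors $D_F$, $D_G$ of $F$ and $G$, viewed as formal sums of irreducible analytic hypersurfaces with multiplicities. The divisor of $H^*$ is the complex-conjugate divisor $\overline{D_H}$. The identity therefore reads
\[
D_F+\overline{D_F}=D_G+\overline{D_G}.
\]

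\textbf{Splitting the divisor of $G$.} This is the step I expect to be the main obstacle. I would set $D_1:=\min(D_F,D_G)$, taking the minimum componentwise over irreducible components, and $D_2:=D_G-D_1\ge 0$. The goal is to show
\[
D_F=D_1+\overline{D_2}.
\]
Fix an irreducible hypersurface $V$ and write $a,b$ for the multiplicities of $V$ in $D_F,D_G$, and $\bar a,\bar b$ for those of $\overline V$. The divisor identity gives $a+\bar a=b+\bar b$ for every $V$. Since $D_1+\overline{D_2}=\min(D_F,D_G)+\overline{D_G}-\min(\overline{D_F},\overline{D_G})$, it suffices to check the scalar identity
\[
a=\min(a,b)+\bar b-\min(\bar a,\bar b).
\]
Take the case $a\le b$. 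Then $\bar a=b+\bar b-a\ge\bar b$, and the right-hand side equals $a+\bar b-\bar b=a$. The case $a>b$ is symmetric. The case $V=\overline V$ is covered by the same computation.

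\textbf{Realizing the divisors and identifying the unit.} On $\mathbb{C}^d$ every effective divisor is the divisor of an entire function, since the second Cousin problem is solvable because $H^2(\mathbb{C}^d,\mathbb{Z})=0$. So choose an entire $G_1$ with divisor $D_1$ and put $G_2:=G/G_1$, which is entire with divisor $D_2$. Then $F/(G_1G_2^*)$ is entire and zero-free, hence equals $e^{h}$ for an entire $h$.

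Substituting into $FF^*=GG^*$ gives $e^{h+h^*}=1$, so $h+h^*\equiv 2\pi\mathrm{i}k$ is constant. On $\mathbb{R}^d$ this says $2\,\mathrm{Re}\,h(x)=\mathrm{Re}(2\pi\mathrm{i}k)=0$. Choose $\gamma$ with $|\gamma|=1$ to absorb $e^{h(0)}$, and let $Q:=-\mathrm{i}(h-h(0))$. Then $Q$ is entire, and it is real-valued on $\mathbb{R}^d$ because $h-h(0)$ is purely imaginary there. This yields
\[
F=\gamma\, e^{\mathrm{i}Q}\,G_1\,G_2^{*}=\gamma\, e^{\mathrm{i}Q(z)}\,G_1(z)\,\overline{G_2(\overline z)},
\]
which is the required factorization.
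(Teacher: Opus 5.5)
Your proof is correct and complete. The paper gives no proof of this statement: it is quoted from the survey as background. The only related argument the paper carries out is the real-coefficient refinement of the discrete analogue (Theorem~\ref{th:realfac}). So the natural comparison is with the classical one-dimensional route of Akutowicz, Walther and Hofstetter. There one writes the exponential-type functions $F$ and $G$ as canonical products over their zeros, and shows that every solution arises by reflecting a subset of the zeros of $G$ across the real axis.

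You differ from that route in two places:
\begin{itemize}
\item You replace zero-by-zero flipping with the divisor bookkeeping $D_1=\min(D_F,D_G)$. The single observation that $a+\bar a=b+\bar b$ forces $a\le b\Rightarrow\bar a\ge\bar b$ handles every component at once, self-conjugate ones included.
\item You replace canonical products, which have no counterpart for $d\ge 2$, by the solvability of the second Cousin problem on $\mathbb{C}^d$ together with the UFD property of the local rings.
\end{itemize}

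What this buys is a dimension-free argument. It uses compact support only to know that $F$ and $G$ are entire, so it shows that any two entire functions with $|F|=|G|$ on $\mathbb{R}^d$ are related as stated.

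What it gives up is growth control. You only learn that $h$ is purely imaginary on $\mathbb{R}^d$. In one dimension, the exponential-type factorization reduces the unit factor to a linear phase $e^{\mathrm{i}(\alpha+\beta z)}$ (a translation) and describes $G_1,G_2$ explicitly through the zeros. Since the statement only asks for an entire $Q$ that is real on $\mathbb{R}^d$, this costs nothing here.

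One cosmetic remark: $\gamma$ is superfluous, because $Q:=-\mathrm{i}h$ is already real on $\mathbb{R}^d$.
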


In the one-dimensional setting, a substantially more explicit description of all possible ambiguities is available. The uniqueness question for functions of one variable was already investigated in the late 1950s by Akutowicz \cite{FR1, FR2} and, independently a few years later, by Walther \cite{phase2} and Hofstetter \cite{phase1}. Their results show that in one dimension every nontrivial ambiguity arises from a specific transformation of the zeros of the holomorphic extension of the Fourier transform. Thus, for a single variable, the structure of all ambiguous solutions is completely characterized in terms of such ``zero-flipping" operations. See  \cite{Review} for more details.
A discrete analogue of Theorem \ref{th:fourierpr} was also established in \cite{Review}; we recall this result in Section \ref{sec:disF}. Furthermore, in the real-valued setting, we obtain a more explicit characterization of the ambiguities (see Theorem \ref{th:realfac}), which plays a key role in the proof of Theorem \ref{th:fanli}.

However, when $f$ and $g$ are constrained to be interval indicator functions, the analysis must be refined based on the particular structure inherent to such functions.

\subsubsection{The Separation Condition}

\begin{comment}
In \cite{fienup}, the authors are given $m$ predetermined intervals $I_j$ , $j=1,\ldots,m$, that satisfy the {\em separation condition} (see Definition \ref{de:sep} ), and investigate whether a function $f$ can be uniquely determined from the magnitude of its Fourier transform, under the assumption that the support of $f$ lies within these known intervals $I_j, j=1,\ldots,m$. The authors of  \cite{fienup} show that $f$ is almost always essentially the only solution with support contained in $\cup_{j=1}^mI_j$.

In this paper, we still employ  the {\em separation condition} (Definition \ref{de:sep}), that guarantees the unique recovery of $f=\sum_{j=1}^m \mathds{1}_{I_j}$
  from $\abs{\widehat{f}}$  (Theorem \ref{thm: separation_condition}).
 However, there is a fundamental difference in problem setup between \cite{fienup} and our work. In \cite{fienup}, the authors are given $m$ predetermined intervals $I_j$, $j=1,\ldots,m$, that satisfy the separation condition, and investigate whether a function $f$ can be uniquely determined from the magnitude of its Fourier transform, under the assumption that the support of $f$ lies within these known intervals $I_j$, $j=1,\ldots,m$. In contrast, our work focuses on recovering indicator functions of unknown intervals that satisfy the separation condition, but without any prior knowledge of what these intervals are.
% \end{rem}

\end{comment}
 In \cite{fienup}, the authors consider \(m\) predetermined intervals \(I_j\), \(j = 1,\ldots,m\), that satisfy the separation condition (see Definition~\ref{de:sep}), and study whether a function \(f\) can be uniquely determined from the magnitude of its Fourier transform, under the assumption that the support of \(f\) is contained in the known union \(\bigcup_{j=1}^m I_j\). They show that, under this assumption, \(f\) is almost always essentially the only function with support contained in \(\bigcup_{j=1}^m I_j\) that matches the given magnitude data.

 In this paper, we also employ the separation condition (Definition~\ref{de:sep}), which guarantees the unique recovery of
 \[
 f = \sum_{j=1}^m \mathds{1}_{I_j}
 \]
 from \(\lvert \widehat{f} \rvert\) (Theorem~\ref{thm: separation_condition}). However, there is a fundamental difference in problem setup between \cite{fienup} and the present work. In \cite{fienup}, the intervals \(I_j\) are given in advance and are assumed to satisfy the separation condition, and the question is whether a function \(f\) supported in \(\bigcup_{j=1}^m I_j\) is uniquely determined by \(\lvert \widehat{f} \rvert\). In contrast, in our setting the intervals themselves are not known a priori: we aim to recover indicator functions of intervals that satisfy the separation condition, but without any prior information about the locations of these intervals.

\subsection{Organization of the paper}

The paper is organized as follows. In Section~\ref{sec: proof of Thm 1.2}, we prove Theorem~\ref{thm: unique_determine}, and in Section~\ref{sec:3}, we prove Theorem~\ref{thm:stable}. Section~\ref{sec:disF} is divided into two parts. The first establishes a discrete Fourier phase retrieval result for real-valued signals, which is subsequently used to construct counterexamples to Theorem~\ref{th:fanli}. The second introduces the turnpike problem, whose central question asks whether a finite set is uniquely determined by its multiset of pairwise differences; building on the classical result that a finite integer set is uniquely determined under a collision-free condition, we establish an analogous result, Theorem~\ref{thm: real_result}, for finite subsets of $\mathbb{R}$, which will be employed in Section~\ref{sec: sec6}. In Section~\ref{sec: sec5}, we apply the results of Section~\ref{sec:disF} to prove Theorem~\ref{th:fanli}. Finally, Section~\ref{sec: sec6} introduces a separation condition and establishes that, under this condition, every $f \in \mathcal{I}_m$ is uniquely determined by $\lvert\widehat{f}(\omega)\rvert$ for all $\omega \in \mathbb{R}$.

\section{Proof of Theorem \ref{thm: unique_determine}}
\label{sec: proof of Thm 1.2}

In this section, we present the proof of Theorem~\ref{thm: unique_determine}. We remark that the proof could be simplified by using the turnpike problem result, which guarantees that four real points are uniquely determined by their multiset of pairwise differences. Nevertheless, we retain the current self-contained proof for the sake of completeness. It should be emphasized that the present problem is not identical to the classical turnpike problem: differences between two left endpoints (or two right endpoints) and differences between a left endpoint and a right endpoint are naturally associated with different signs and are therefore distinguishable.

\begin{proof}[Proof of Theorem \ref{thm: unique_determine}]
 Assume that $g \in \mathcal{I}_{m'}$ satisfies $\abs{\widehat{f}} = \abs{\widehat{g}}$
 for some integer $m' \geq 1$.
 Without loss of generality, we assume that 
 \[
 f(x)=\sum_{k=1}^{m}\mathds{1}_{[a_k,b_k]}(x),\quad \text{and}\quad g(x)=\sum_{k=1}^{m'}\mathds{1}_{[a'_k,b'_k]}(x).
 \]
We first show that  $m=m'$. From  $\abs{\widehat{f}}=\abs{\widehat{g}}$, we can directly derive, for any $\omega\in {\mathbb R}\setminus \{0\}$:
\[
\begin{aligned}
  \abs{\widehat{f}(\omega)}=&\abs{\sum_{k=1}^{m}\frac{\exp(-2\pi \mathrm{i}\cdot b_k\cdot \omega)-\exp(-2\pi \mathrm{i}\cdot a_k\cdot \omega)}{2\pi \omega}}\\
  =&\abs{\sum_{k=1}^{{m'}}\frac{\exp(-2\pi \mathrm{i}\cdot {b_k'}\cdot \omega)-\exp(-2\pi \mathrm{i}\cdot {a_k'}\cdot \omega)}{2\pi \omega}}= \abs{\widehat{g}(\omega)}.
  \end{aligned}
\]
Consequently, for any $t\in \mathbb{R}$, we have
\begin{equation}
\label{eqn: temp}
\left|\sum_{k=1}^{m}\left({\exp(\mathrm{i}\cdot b_k\cdot t)-\exp(\mathrm{i}\cdot a_k\cdot t)}\right)\right|^2=\left|\sum_{k=1}^{{m'}}\left({\exp( \mathrm{i}\cdot {b_k'}\cdot t)-\exp(\mathrm{i}\cdot {a_k'}\cdot t)}\right)\right|^2.
\end{equation}
Setting 
\begin{equation}
\label{eqn: c}
\{c_k\}_{k=1}^{2m} = \{a_1, b_1, \ldots, a_m, b_m\}
\end{equation}
 with $c_1 > c_2 > \cdots > c_{2m}$, and similarly $\{c'_k\}_{k=1}^{2m'} = \{a'_1, b'_1, \ldots, a'_{m'}, b'_{m'}\}$ with $c'_1 > c'_2 > \cdots > c'_{2m'}$, we obtain from \eqref{eqn: temp}:
\[
\begin{aligned}
\left|\sum_{k=1}^{m}\left(\exp{(\mathrm{i} \cdot b_k\cdot  t)} - \exp{(\mathrm{i} \cdot a_k \cdot t)}\right)\right|^2 
&= 2m + 2\sum_{i < j}(-1)^{i+j} \cos\bigl((c_i - c_j)\cdot t\bigr), \\
\left|\sum_{k=1}^{m'}\left(\exp{( \mathrm{i} \cdot b'_k \cdot t)} - \exp{( \mathrm{i} \cdot  a'_k \cdot t)}\right)\right|^2 
&= 2m' + 2\sum_{i < j}(-1)^{i+j} \cos\bigl( (c'_i - c'_j)\cdot t\bigr),
\end{aligned}
\]
which implies
\begin{equation}
\label{eqn: temp_final}
(m-{m'})+\sum_{i< j}(-1)^{(i+j)}\cdot \cos((c_i-c_j)\cdot t)-\sum_{i< j}(-1)^{(i+j)}\cdot \cos((c'_i-c'_j)\cdot t)=0,
\end{equation}
for any $t\in \mathbb{R}$. 
Recall that the function set 
\[
\{1,\cos(d_1\cdot t),\ldots,\cos(d_n\cdot t): 0<d_1<d_2<\cdots<d_n\}
\]
 for any given positive interger $n$ forms a Chebyshev system \cite{chebyshev}. Therefore, we can directly conclude that $m=m'$.

Now we begin to prove the results case by case. 

\textbf{Case 1: $m=1$. }

By Definition \ref{def: unique_recovery}, both intervals may be assumed to start at the origin without loss of generality. It therefore suffices to show the following: if $f$ and $g$ are interval indicator functions of the form
\[
f(x) = \mathds{1}_{[0,b_1]}(x) \quad \text{and} \quad g(x) = \mathds{1}_{[0,{b'_1}]}(x),
\]
where $b_1,b'_1>0$, then $\abs{\widehat{f}} = \abs{\widehat{g}}$ implies $b_1 = b'_1$. Since the formulation in \eqref{eqn: temp_final} reduces to:
\[
\cos(b_1\cdot t)-\cos(b'_1\cdot t)=0,
\]
for any $t\in \mathbb{R}$, we immediately obtain $b_1 = b_1'$, and the conclusion follows.

\textbf{Case 2: $m=2$. }
Again by Definition~\ref{def: unique_recovery}, let $f$ and $g$ be given by
\[
f(x) = \mathds{1}_{[0,b_1]}(x) + \mathds{1}_{[a_2,b_2]}(x) \quad \text{and} \quad g(x) = \mathds{1}_{[0,b'_1]}(x) + \mathds{1}_{[a'_2,b'_2]}(x).
\]
Without loss of generality, assume that:
\begin{equation}\label{eq:order}
\begin{aligned} & 0 < b_1 < a_2 < b_2 \quad \text{with} \quad b_2 - a_2 \geq b_1, 
 \text{ and } 0 < b'_1 < a'_2 < b'_2 \quad \text{with} \quad b'_2 - a'_2 \geq b'_1.
  \end{aligned}
\end{equation}
With the notation established in (\ref{eqn: c}), we have
\[
c_1=b_2, \quad c_2=a_2,\quad c_3=b_1,\quad c_4=0,
\]
and 
 \[
 c'_1=b'_2,\quad c'_2=a'_2, \quad c'_3=b'_1, \quad c'_4=0.
 \] 
  It suffices to show that if  $|\widehat{f}| = |\widehat{g}|$, then  $c_j=c'_j,$  $j=1,2,3,4$.
%The subsequent proof is divided into two steps.
%\textbf{The differences $c_i - c_j$ are pairwise distinct for $1 \leq i < j \leq 4$.}
\begin{comment}
Based on (\ref{eqn: temp_final}) and whether $i+j$ is even or odd,
we have
\[
\{c_1-c_3,c_2\}=\{c'_1-c'_3, c'_2\},\text{ and } \{c_1-c_2,c_1,c_2-c_3,c_3\}=\{c'_1-c'_2,c'_1,c'_2-c'_3,c'_3\}.
\]
Here, we use $c_4=c_4'=0$.
According to (\ref{eq:order}), we have
\[
c_1=\max \{c_1-c_2,c_1,c_2-c_3,c_3\}=\max\{c'_1-c'_2,c'_1,c'_2-c'_3,c'_3\}=c'_1.
\]

Since $\{c_1-c_3,c_2\}=\{c'_1-c'_3, c'_2\}$, either $c_1-c_3=c'_1-c'_3$
and $c_2=c_2'$, or $c_1-c_3=c_2'$  and $c_2=c'_1-c'_3$. The first alternative immediately gives $c_1=c'_1$
 and $c_2=c_2'$, establishing the result. We proceed to rule out the second alternative.
 The equalities 
 \[
 c_1-c_3=c_2',\qquad c_2=c'_1-c'_3
  \]
  imply 
  \[
  c_1-c_2-c_3=c_2'-c_1'+c_3'
  \]
  . However, according to (\ref{eq:order}), we have $c_1-c_2-c_3\geq 0$ while $c_2'-c_1'+c_3'\leq 0$, which implies $c_2+c_3=c_1$ and $c_2'+c_3'=c_1'=c_1$.
\end{comment}
The subsequent proof is divided into three steps.

\textbf{Substep 1: Prove that $b_2=b'_2$. }

Consider the sets $\{c_i - c_j\}_{1 \leq i < j \leq 4}$ and $\{c_i' - c_j'\}_{1 \leq i < j \leq 4}$ appearing in \eqref{eqn: temp_final}. Note that the only pair $(c_i, c_j)$ with $i < j$ satisfying $c_i - c_j = b_2$ is $(c_1, c_4)$, and similarly, the only pair $(c_i', c_j')$ with $i < j$ satisfying $c_i' - c_j' = b_2'$ is $(c_1', c_4')$.

Suppose for contradiction that $b_2 \neq b_2'$; without loss of generality, assume $b_2 > b_2'$. Applying the properties of Chebyshev systems to \eqref{eqn: temp_final}, we deduce that $\cos(b_2 \cdot t) \equiv 0$, which is a contradiction since $\cos(b_2 \cdot t)$ cannot vanish identically for any $b_2 \in \mathbb{R}$. Therefore $b_2 = b_2'$.

\textbf{Substep 2: Prove that $b_1=b'_1$. }

We begin by establishing the following identity 
\begin{equation}
\label{eqn: sum_temp}
b_2 - a_2 + b_1 = b_2' - a_2' + b_1'.
\end{equation}
By the fact that, for any $t\in \mathbb{R}$, 
\[
\cos(t)=\sum_{n=0}^{\infty}\frac{(-1)^n}{(2n)!} \cdot t^{2n},%\qquad \text{and}\qquad \sin(t)=\sum_{n=0}^{\infty}\frac{(-1)^nt^{2n+1}}{(2n+1)!},
\]
thus
\[
\begin{aligned}
&\sum_{i< j}(-1)^{(i+j)}\cdot \cos(({c}_i-{c}_j)\cdot t)
%=&-2\cos((b_2-a_2)\cdot x)+2\cos((b_2-b_1)\cdot x)-2\cos((a_2-b_1)\cdot x)-2\cos(b_2\cdot x)+2\cos(a_2\cdot x)-2\cos(b_1\cdot x)\\
 = \sum_{n=0}^{\infty}\alpha_{n}t^{2n},\ \text{and}\  \sum_{i< j}(-1)^{(i+j)}\cdot \cos((c'_i-c'_j)\cdot t)= \sum_{n=0}^{\infty}\alpha'_{n}t^{2n},
\end{aligned}
\]
%\[
%\begin{aligned}
%&\sum_{i< j}(-1)^{(i+j)}\cdot \cos((\mytilde{c}_i-\mytilde{c}_j)\cdot t)\\
%=&-2\cos((\mytilde{b}_2-\mytilde{a}_2)\cdot x)+2\cos((\mytilde{b}_2-\mytilde{b}_1)\cdot x)-2\cos((\mytilde{a}_2-\mytilde{b}_1)\cdot x)-2\cos(\mytilde{b}_2\cdot x)+2\cos(\mytilde{a}_2\cdot x)-2\cos(\mytilde{b}_1\cdot x)\\
 %=& \sum_{n=0}^{\infty}\mytilde{\alpha}_{n}x^{2n},
%\end{aligned}
%\]
where 
\begin{equation}
\label{eqn: alpha}
\alpha_{n}:=\frac{(-1)^n\left(-2(b_2-a_2)^{2n}+2(b_2-b_1)^{2n}-2(a_2-b_1)^{2n}-2b_2^{2n}+2a_2^{2n}-2b_1^{2n}\right)}{(2n)!},
\end{equation}
and 
\begin{equation}
\label{eqn: w_alpha}
%\mytilde{\alpha}_{n}:=\frac{(-1)^n\left(-2(\mytilde{b}_2-\mytilde{a}_2)^{2n}+2(\mytilde{b}_2-\mytilde{b}_1)^{2n}-2(\mytilde{a}_2-\mytilde{b}_1)^{2n}-2\mytilde{b}_2^{2n}+2\mytilde{a}_2^{2n}-2\mytilde{b}_1^{2n}\right)}{(2n)!}.
\alpha_{n}':=\frac{(-1)^n\left(-2(b_2'-a_2')^{2n}+2(b_2'-b_1')^{2n}-2(a_2'-b_1')^{2n}-2(b'_2)^{2n}+2(a'_2)^{2n}-2(b'_1)^{2n}\right)}{(2n)!}.
\end{equation}
Substituting these into (\ref{eqn: temp_final}) with $m={m'}$, it obtains that 
\begin{equation}
\label{eqn: alpha_result}
\alpha_{n}=\alpha'_{n}, \quad \text{for}\ n=0,1,2,\cdots.
\end{equation}
Take $n=1$, we have
\[
\alpha_{1}={(b_2-a_2)^2-(b_2-b_1)^{2}+(a_2-b_1)^{2}+b_2^2-a_2^2+b_1^{2}}=(b_2-a_2+b_1)^2.
\]
Similarly,
\[
\alpha'_{1}={(b'_2-a'_2)^2-(b'_2-b'_1)^{2}+(a'_2-b'_1)^{2}+b_2^{\prime2}-a_2^{\prime2}+b_1^{\prime2}}=(b'_2-a'_2+b'_1)^2.
\]
Since $b_2-a_2+b_1>0$ and $b'_2-a'_2+b'_1>0$, combined with $\alpha_1=\alpha'_1$, this directly yields the conclusions in (\ref{eqn: sum_temp}).

We next  show  $b_1=b'_1$. For convenience, we set
\begin{equation}\label{eq:sb}
s := b_1, \quad s' := b'_1, \quad \beta := b_2 = b'_2, \quad \gamma := b_2 - a_2 + b_1 = b'_2 - a'_2 + b'_1.
\end{equation}
Here, we use (\ref{eqn: sum_temp}) and the result in Substep 1. 
 Consequently, we can express $a_2$ and $a'_2$ as 
 \begin{equation}\label{eq:a2a2}
 a_2 = \beta - \gamma + s\quad \text{and}\quad a'_2 = \beta - \gamma + s',
 \end{equation}
  respectively. 
Furthermore, according to (\ref{eqn: alpha}), (\ref{eqn: w_alpha}) and (\ref{eq:sb}), $\alpha_2$ and $\alpha'_2$ can be reformulated as:
\[
\begin{aligned}
\alpha_{2}=\frac{-(s-\gamma)^4+(s-\beta)^4-(\beta-\gamma)^4-\beta^4+(s-\gamma+\beta)^4-s^4}{12},
\end{aligned}
\]
and 
\[
\begin{aligned}
\alpha'_{2}=\frac{-(s'-\gamma)^4+(s'-\beta)^4-(\beta-\gamma)^4-\beta^4+(s'-\gamma+\beta)^4-(s')^4}{12}.
\end{aligned}
\]
Given that $\alpha_2 = \alpha'_2$, it leads to:
\[
-(s-\gamma)^4+(s-\beta)^4+(s-\gamma+\beta)^4-s^4=-(s'-\gamma)^4+(s'-\beta)^4+(s'-\gamma+\beta)^4-(s')^4.
\]
Following from $b_2 - a_2 \geq b_1$ and $b'_2 - a'_2 \geq b_1$, it leads to $s\leq \gamma/2$ and $s'\leq \gamma/2$. To establish that, $b_1 = b'_1$, or equivalently, $s = s'$, it suffices to prove that the function $h:\mathbb{R}\rightarrow \mathbb{R}$ defined as
\[
h(s) := -(s-\gamma)^4+(s-\beta)^4+(s-\gamma+\beta)^4-s^4
\] 
is monotonic on the interval $s \in (0,\gamma/2]$.
Through direct computation, we obtain the derivative of $h$:
 \[
 \small
 \begin{aligned}
\frac{\mathrm{d}h(s)}{\mathrm{d}s}
\,\,=\,\,12(\beta-\gamma)\cdot\beta\cdot (2s-\gamma)\,\,\leq\,\, 0,
 \end{aligned}
 \]
 for any $s\in (0,\gamma/2]$, based on the fact that $\beta\geq \gamma>0$. Therefore, $h$ is monotonically decreasing on this interval, which implies $s = s'$, or equivalently, $b_1 = b'_1$.

 \begin{comment}
 \[
 \small
 \begin{aligned}
 h'(s)=&-4(s-\gamma)^3+4(s-\beta)^3+4(s-\gamma+\beta)^3-4s^3\\
 =&4((\gamma-\beta)\cdot ((s-\beta)^2+(s-\gamma)^2+(s-\beta)\cdot (s-\gamma))\\
 &+(\beta-\gamma)\cdot ( (s-\gamma+\beta)^2+s^2+s\cdot (s-\gamma+\beta)))\\
 =&4(\beta-\gamma)\cdot (-s^2-\beta^2+2s\cdot\beta-s^2-\gamma^2+2s\cdot \gamma-s^2+(\beta+\gamma)\cdot s-\beta\cdot \gamma)\\
  &+(\beta-\gamma)\cdot (s^2+\gamma^2+\beta^2-2s\cdot \gamma +2s\cdot \beta-2\gamma\cdot \beta+s^2+s^2-s\cdot \gamma+s\cdot \beta)\\
   =&12(\beta-\gamma)\cdot\beta\cdot (2s-\gamma)\leq 0,
 \end{aligned}
 \]
\end{comment}
 
 \textbf{Substep 3: Prove that $a_2=a'_2$. }
 
Based on the equation $b_2 - a_2 + b_1 = b'_2 - a'_2 + b'_1$ in (\ref{eqn: sum_temp}), and given that $b_1 = b'_1$ and $b_2 = b'_2$, we can conclude that $a_2 = a'_2$. This completes the proof.
\end{proof}
%\subsection{Stability Results}
\section{Proof of Theorem \ref{thm:stable}}
\label{sec:3}
The following formula will be used repeatedly in the sequel. Although it follows from a straightforward computation, we provide the proof here for completeness.

\begin{lem}
\label{lem: key}
Let $a,b,c,d\in \mathbb{R}$ with $a<b$ and $c<d$. Set
\[
L_1:=b-a,\qquad L_2:=d-c.
\]
Then
\begin{equation}
\label{eqn: ab_cd}
\left(\mathds{1}_{[a,b]}\mycircledast \mathds{1}_{[c,d]}\right)(x)
=
\begin{cases}
x-(a+c), 
& \text{if } x\in [a+c,\,a+c+\min\{L_1,L_2\}],\\
\min\{L_1,L_2\}, 
& \text{if } x\in [a+c+\min\{L_1,L_2\},\,a+c+\max\{L_1,L_2\}],\\
a+c+L_1+L_2-x, 
& \text{if } x\in [a+c+\max\{L_1,L_2\},\,b+d],\\
0, 
& \text{otherwise}.
\end{cases}
\end{equation}
\end{lem}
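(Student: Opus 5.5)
The plan is to compute the convolution directly from the definition,
\[
\left(\mathds{1}_{[a,b]}\mycircledast \mathds{1}_{[c,d]}\right)(x)=\int_{\mathbb{R}}\mathds{1}_{[a,b]}(y)\,\mathds{1}_{[c,d]}(x-y)\,dy=\bigl|[a,b]\cap[x-d,\,x-c]\bigr|,
\]
so that the value at $x$ is the length of the overlap of two intervals, one fixed with length $L_1$ and one sliding with length $L_2$. Writing $[a,b]\cap[x-d,x-c]=[\max\{a,x-d\},\,\min\{b,x-c\}]$, the value equals $\bigl(\min\{b,x-c\}-\max\{a,x-d\}\bigr)_+$. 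All the work is in evaluating this maximum and minimum on each range of $x$.

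First I will handle the support. The intersection is nonempty exactly when $x-c\ge a$ and $x-d\le b$, that is, when $x\in[a+c,b+d]$. Outside this interval the value is $0$, which gives the last line of \eqref{eqn: ab_cd}.

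Next, by symmetry of convolution (swap the roles of $[a,b]$ and $[c,d]$ through the substitution $y\mapsto x-y$), I may assume $L_1\le L_2$, so that $\min\{L_1,L_2\}=L_1$. Within $[a+c,b+d]$ I then split at the two breakpoints $a+c+L_1=b+c$ and $a+c+L_2=a+d$, which satisfy $b+c\le a+d$.

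On $[a+c,b+c]$ we have $x-c\le b$ and $x-d\le a$, so the overlap is $[a,x-c]$ with length $x-(a+c)$. On $[b+c,a+d]$ we have $x-c\ge b$ and $x-d\le a$, so the overlap is all of $[a,b]$, with length $L_1$. On $[a+d,b+d]$ we have $x-d\ge a$ and $x-c\ge b$, so the overlap is $[x-d,b]$ with length $b+d-x=a+c+L_1+L_2-x$.

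Finally, I will note that consecutive formulas agree at the breakpoints, so the closed-interval overlaps in the case distinction are consistent. When $L_1>L_2$ the same computation holds with the middle plateau equal to $L_2$, which completes the case analysis. There is no real obstacle here; the only care needed is the bookkeeping of which endpoint is active in each regime and checking the symmetric case $L_1>L_2$.
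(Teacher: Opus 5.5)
Your proposal is correct and follows essentially the same route as the paper: both reduce the convolution to the overlap length $|[a,b]\cap[x-d,x-c]|$ and read off the piecewise-linear profile. Your explicit endpoint bookkeeping, with $L_1\le L_2$ assumed without loss of generality via the symmetry of the formula, is simply a more detailed version of the paper's qualitative description after it translates to $[0,L_1]$.
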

\begin{proof}
By the definition of convolution, we have
\[
\left(\mathds{1}_{[a,b]}\mycircledast \mathds{1}_{[c,d]}\right)(x)
=
\int_{\mathbb{R}} \mathds{1}_{[a,b]}(y)\mathds{1}_{[c,d]}(x-y)\,dy.
\]
The integrand is equal to $1$ precisely when
\[
y\in [a,b]
\quad\text{and}\quad
x-y\in [c,d],
\]
or equivalently,
\[
y\in [a,b]\cap [x-d,x-c].
\]
Therefore,
\[
\left(\mathds{1}_{[a,b]}\mycircledast \mathds{1}_{[c,d]}\right)(x)
=
\left|[a,b]\cap [x-d,x-c]\right|.
\]
It remains to compute the length of this intersection.

After translating the two intervals by $-a$ and $-c$, this length is the same as
\[
\big|[0,L_1]\cap [x-(a+d),\,x-(a+c)]\big|=\big|[0,L_1]\cap [x-(a+c)-L_2,\,x-(a+c)]\big|.
\]
%Equivalently, it is the length of the overlap of two intervals of lengths
%$L_1$ and $L_2$ whose relative displacement is $x-(a+c)$. (the relative displacement is a bit wierd).
Thus the overlap length increases linearly from $0$ to
$\min\{L_1,L_2\}$ as $x$ goes from $a+c$ to
$a+c+\min\{L_1,L_2\}$, then remains equal to
$\min\{L_1,L_2\}$ until $x=a+c+\max\{L_1,L_2\}$, and finally decreases
linearly to $0$ as $x$ goes from $a+c+\max\{L_1,L_2\}$ to $b+d$.
Hence the result is completed. 
\end{proof}

\begin{comment}
Now we present the stability result in one-interval case. 
\begin{thm}
\label{thm: stable_1}
Let $f: \mathbb{R} \to \mathbb{R}$ be defined as the indicator function on the interval $[a_1, b_1]$, i.e., $f(x)=\mathds{1}_{[a_1,b_1]}(x)$, where $a_1, b_1 \in \mathbb{R}$ and $a_1 < b_1$. Consider another function $g: \mathbb{R} \to \mathbb{R}$  defined as $g(x)=\mathds{1}_{[\mytilde{a}_1,\mytilde{b}_1]}(x)$, where $\mytilde{a}_1, \mytilde{b}_1 \in \mathbb{R}$  and $\mytilde{a}_1 < \mytilde{b}_1$. For any given $\epsilon > 0$,  if $\||\widehat{f}|^2-|\widehat{g}|^2\|_2 \leq  \epsilon,$ then 
\begin{equation}
\label{eqn: final_one_interval}
|\mytilde{b}_1-\mytilde{a}_1-(b_1-a_1)|\lesssim \frac{\epsilon}{\sqrt{b_1-a_1}}. 
\end{equation}
\end{thm}
\end{comment}
By Lemma \ref{lem: key}, we now proceed to the proof of Theorem \ref{thm:stable}~$(i)$.
\begin{proof}[ Proof of Theorem \ref{thm:stable}~(i)]
Assume that $I=[a_1,b_1]$ and $\mytilde{I}=[\mytilde{a}_1,\mytilde{b}_1]$. Hence, it is enough to show that
\begin{equation}
\label{eqn: final_one_interval}
|\mytilde{b}_1-\mytilde{a}_1-(b_1-a_1)|\leq \sqrt{\frac{3}{4\pi}}\cdot  \frac{\epsilon}{\sqrt{b_1-a_1}}. 
\end{equation}
By Definition \ref{def: unique_recovery}, we may assume without loss of generality that $a_1 = \mytilde{a}_1 = 0$ via translation. Consequently, proving \eqref{eqn: final_one_interval} reduces to establishing the following inequality:
\begin{equation}
\label{eqn: one_dimensional_temp}
|\mytilde{b}_1 - b_1| \leq \sqrt{\frac{3}{4\pi}}\cdot \frac{\epsilon}{\sqrt{b_1}}.
\end{equation}
%This reformulation effectively reduces the problem to comparing the right endpoints of the intervals, given that their left endpoints now coincide at the origin.
By the properties of the Fourier transform, for any  $f\in \cI$, we have:
\begin{equation}
\label{eqn: f_property}
\widehat{f\mycircledast f_-}=|\widehat{f}|^2\qquad \text{and}\qquad 2\pi\|f\|_2^2=\|\widehat{f}\|_2^2,
\end{equation}
where $f_-(x):=f(-x)$.
Therefore, the condition $\||\widehat{f}|^2-|\widehat{g}|^2\|_2\leq \epsilon$ is equivalent to 
\begin{equation}
\label{eqn: temp1}
\|\left(\mathds{1}_{[0,b_1]}\mycircledast\mathds{1}_{[-b_1,0]}\right)(x)-\left(\mathds{1}_{[0,\mytilde{b}_1]}\mycircledast\mathds{1}_{[-\mytilde{b}_1,0]}\right)(x)\|_2^2\leq \frac{\epsilon^2}{2\pi}.
\end{equation}
According to (\ref{eqn: ab_cd}), for any given $b>0$, we have:
\[
\left(\mathds{1}_{[0,b]}\mycircledast\mathds{1}_{[-b,0]}\right)(x)=
\begin{cases}
x+b,\ &\text{if}\ x\in [-b,0];\\
b-x,\ &\text{if}\ x\in [0,b];\\
0,\ &\text{otherwise}.
\end{cases}
\]
Let
 \[
b_{\min}:=\min\{b_1,\mytilde{b}_1\}\quad \text{and} \quad b_{\max}:=\max\{b_1,\mytilde{b}_1\}.
\]
 We can directly obtain:  
\begin{equation}
\label{eqn: temp2}
\begin{aligned}
&\|\left(\mathds{1}_{[0,b_1]}\mycircledast\mathds{1}_{[-b_1,0]}\right)(x)-\left(\mathds{1}_{[0,\mytilde{b}_1]}\mycircledast\mathds{1}_{[-\mytilde{b}_1,0]}\right)(x)\|_2^2\\
=&2\int_{0}^{b_{\min}}(b_{\max}-b_{\min})^2dx+2\int_{b_{\min}}^{b_{\max}}(b_{\max}-x)^2dx\\
=& 2\cdot b_{\min}\cdot (b_{\max}-b_{\min})^2+\frac{2}{3}\cdot (b_{\max}-b_{\min})^3\\
=&(b_{\max}-b_{\min})^2\cdot (\frac{2}{3}\cdot b_{\max}+\frac{4}{3}\cdot b_{\min})\geq \frac{2b_1}{3}\cdot (\mytilde{b}_1-b_1)^2.
\end{aligned}
\end{equation}
Substituting (\ref{eqn: temp2}) into (\ref{eqn: temp1}), it leads to:
\[
|\mytilde{b}_{1}-b_{1}|\,\,\leq\,\, \sqrt{\frac{3}{4\pi}}\cdot \frac{\epsilon}{ \sqrt{b_1}},
\]
which leads to the conclusion in (\ref{eqn: one_dimensional_temp}).
\end{proof}

\begin{comment}
We next consider part (ii) of Theorem \ref{thm:stable}, which concerns the two-interval case.
Assume that
\[
f(x) = \mathds{1}_{[a_1,b_1]}(x) + \mathds{1}_{[a_2,b_2]}(x),
\]
where $a_1<b_1<a_2<b_2$.
We begin by deriving the following fundamental formulation for $f$. As in the one-interval case, by (\ref{eqn: ab_cd}), we have
\begin{equation}
\label{eqn: f_formulation_origin}
\begin{aligned}
f(x)\mycircledast f(-x)
&=(\mathds{1}_{[a_1,b_1]}(x)+\mathds{1}_{[a_2,b_2]}(x))\mycircledast(\mathds{1}_{[-b_1,-a_1]}(x)+\mathds{1}_{[-b_2,-a_2]}(x))\\
&=h_1(x)+h_2(x)+h_3(x)+h_3(-x).
\end{aligned}
\end{equation}
Here $h_1$, $h_2$ and $h_3$ are defined as:
\[
h_1(x):=
\mathds{1}_{[a_1,b_1]}(x)\mycircledast\mathds{1}_{[-b_1,-a_1]}(x)=
\begin{cases}
x+b_1-a_1,\ &\text{if}\ x\in [-(b_1-a_1),0];\\
b_1-a_1-x,\ &\text{if}\ x\in [0,b_1-a_1];\\
0,\ &\text{otherwise},
\end{cases}
\]
\[
h_2(x):=
\mathds{1}_{[a_2,b_2]}(x)\mycircledast\mathds{1}_{[-b_2,-a_2]}(x)=
\begin{cases}
x+b_2-a_2,\ &\text{if}\ x\in [-(b_2-a_2),0];\\
b_2-a_2-x,\ &\text{if}\ x\in [0,b_2-a_2];\\
0,\ &\text{otherwise},
\end{cases}
\]
and
\[
h_3(x):=
\mathds{1}_{[a_2,b_2]}(x)\mycircledast\mathds{1}_{[-b_1,-a_1]}(x)=
\begin{cases}
x-(a_2-b_1),\ &\text{if}\ x\in [a_2-b_1,a_2-b_1+L_{\min}];\\
L_{\min},\ &\text{if}\ x\in [a_2-b_1+L_{\min},a_2-b_1+L_{\max}]; \\
b_2-a_1-x,\ &\text{if}\ x\in [a_2-b_1+L_{\max},b_2-a_1];\\
0,\ &\text{otherwise},
\end{cases}
\]
with $L_{\max}=\max\{b_2-a_2,b_1-a_1\}$ and $L_{\min}=\min\{b_2-a_2,b_1-a_1\}$. 
\end{comment}
Then we turn to part $(ii)$ of Theorem \ref{thm:stable}, namely the two-interval case.
\begin{comment}
and begin with the following lemma.
It can be proved by (\ref{eqn: ab_cd}) directly.
\begin{lem}
Assume that
\[
f(x) = \mathds{1}_{I_1}(x) + \mathds{1}_{I_2}(x),
\]
where $I_1=[a_1,b_1]$ and $I_2=[a_2,b_2]$ with  $a_1<b_1<a_2<b_2$.
Then
\begin{equation}\label{eqn: f_formulation_origin}
f(x)\mycircledast f(-x)
=
(\abs{I_1}-x)_++(\abs{I_2}-x)_+ +h(x)+h(-x),
\end{equation}
where,
\[
h(x)=\mathds{1}_{[a_2,b_2]}\mycircledast\mathds{1}_{[-b_1,-a_1]}(x)=
\begin{cases}
x-d, & x\in [d,d+I_{\min}],\\
I_{\min}, & x\in [d+I_{\min},d+I_{\max}],\\
d+\abs{I_1}+\abs{I_2}-x, & x\in [d+I_{\max},d+\abs{I_1}+\abs{I_2}],\\
0, & \text{otherwise},
\end{cases}
\]
with 
\[
I_{\min}:=\min\{\abs{I_1},\abs{I_2}\},\qquad
I_{\max}:=\max\{\abs{I_1},\abs{I_2}\},
\]
and $d:=\dist(I_1,I_2)=a_2-b_1$.

\end{lem}

\end{comment}

\begin{comment}
Theorem \ref{thm: stable_2} below establishes that the stability result for two-interval indicator functions $f$ and $g$: if $\||\widehat{f}|^2-|\widehat{g}|^2\|_2\leq \epsilon$ for sufficiently small $\epsilon$, then the differences in their parameters-namely, the shortest interval length $L_{\min}$, the longest interval length $L_{\max}$, and the interval spacing $\min\{|a_2-b_1|,|b_2-a_1|\}$-are all bounded by $\epsilon$. 
\end{comment}

\begin{proof}[Proof of Theorem \ref{thm:stable}~(ii)]
Assume that $I_1=[a_1,b_1], I_2=[a_2,b_2]$ and $\mytilde{I}_1=[\mytilde{a}_1,\mytilde{b}_1], \mytilde{I}_2=[\mytilde{a}_2, \mytilde{b}_2]$. Hence,
it is enough to show that
\begin{equation}
\label{eqn: final_two_interval}
\begin{aligned}
\abs{\min\{b_2-a_2,b_1-a_1\}-\min\{\mytilde{b}_2-\mytilde{a}_2,\mytilde{b}_1-\mytilde{a}_1\}}\lesssim &\  \frac{\epsilon}{\min\{c_0,c\}},\\
\abs{\max\{b_2-a_2,b_1-a_1\}-\max\{\mytilde{b}_2-\mytilde{a}_2,\mytilde{b}_1-\mytilde{a}_1\}}\lesssim &\  \frac{\epsilon}{\min\{c_0,c\}},\\
\abs{ \min\{\abs{a_2-b_1},\abs{b_2-a_1}\}- \min\{\abs{\mytilde{a}_2-\mytilde{b}_1},\abs{\mytilde{b}_2-\mytilde{a}_1}\}}\lesssim &\  \frac{\epsilon}{\min\{c_0,c\}}.
\end{aligned}
\end{equation}
By Definition \ref{def: unique_recovery}, we may assume without loss of generality that $a_1 = \mytilde{a}_1 = 0$ and that
\[
b_1<a_2<b_2, \quad b_2-a_2\geq b_1,
\]
and
\[
\mytilde{b}_1<\mytilde{a}_2<\mytilde{b}_2,\quad \mytilde{b}_2-\mytilde{a}_2\geq \mytilde{b}_1.
\] 
Under these assumptions, the proof of (\ref{eqn: final_two_interval}) can be reduced to demonstrating the following set of inequalities for end-points:
\begin{equation}
\label{eqn: new_est}
|b_1-\mytilde{b}_1|\lesssim\frac{\epsilon}{\min\{c_0,c\}},
\quad |b_2-\mytilde{b}_2|\lesssim \frac{\epsilon}{\min\{c_0,c\}},\quad |a_2-\mytilde{a}_2|\lesssim \frac{\epsilon}{\min\{c_0,c\}}.
\end{equation}
According to the fact in (\ref{eqn: f_property}), we can conclude that:
\[
2\pi\|f\mycircledast f_--g\mycircledast g_-\|_2^2=\||\widehat{f}|-|\widehat{g}|\|_2^2\leq \epsilon^2,
\]
which implies
\begin{equation}
\label{eqn: interval_est}
\int_{\beta_1}^{\beta_2}\left((f\mycircledast {f_-})(x)-(g\mycircledast g_-)(x)\right)^2dx\leq \frac{\epsilon^2}{2\pi},
\end{equation}
 for any $\beta_1,\beta_2\in \mathbb{R}$ such that $\beta_1<\beta_2$. Here, $f_-(x):=f(-x)$ and $g_-(x):=g(-x)$.

The proof is based on applying \((\ref{eqn: interval_est})\) with different
choices of \(\beta_1\) and \(\beta_2\). Combining the resulting estimates yields
\((\ref{eqn: new_est})\).

To use \((\ref{eqn: interval_est})\), we need to rewrite the formulations of $f\mycircledast f_-$ and $g\mycircledast g_-$. 
According to (\ref{eqn: ab_cd}), a simple calculation shows that
\begin{equation}
\label{eqn: f_formulation}
\begin{aligned}
(f\mycircledast f_-)(x)=&\left((\mathds{1}_{[0,b_1]}+\mathds{1}_{[a_2,b_2]})\mycircledast(\mathds{1}_{[-b_1,0]}+\mathds{1}_{[-b_2,-a_2]})\right)(x)\\
=&s_1(x)+s_2(x)+s_3(x)+s_3(-x).
\end{aligned}
\end{equation}
Here $s_1$, $s_2$ and $s_3$ are defined as:
\[
s_1(x):=
\left(\mathds{1}_{[0,b_1]}\mycircledast\mathds{1}_{[-b_1,0]}\right)(x)=
\begin{cases}
x+b_1,\ &\text{if}\ x\in [-b_1,0];\\
b_1-x,\ &\text{if}\ x\in [0,b_1];\\
0,\ &\text{otherwise},
\end{cases}
\]
\[
s_2(x):=
\left(\mathds{1}_{[a_2,b_2]}\mycircledast\mathds{1}_{[-b_2,-a_2]}\right)(x)=
\begin{cases}
x+b_2-a_2,\ &\text{if}\ x\in [-(b_2-a_2),0];\\
b_2-a_2-x,\ &\text{if}\ x\in [0,b_2-a_2];\\
0,\ &\text{otherwise},
\end{cases}
\]
and 
\[
s_3(x):=
\left(\mathds{1}_{[a_2,b_2]}\mycircledast\mathds{1}_{[-b_1,0]}\right)(x)=
\begin{cases}
x-(a_2-b_1),\ &\text{if}\ x\in [a_2-b_1,a_2];\\
b_1,\ &\text{if}\ x\in [a_2,b_2-b_1]; \\
b_2-x,\ &\text{if}\ x\in [b_2-b_1,b_2];\\
0,\ &\text{otherwise}.
\end{cases}
\]
Similarly, we have
\begin{equation}
\label{eqn: g_formulation}
(g\mycircledast g_-)(x)={h}_1(x)+{h}_2(x)+{h}_3(x)+{h}_3(-x), 
\end{equation}
where 
\[
h_1(x):=
\begin{cases}
x+\mytilde{b}_1,\ &\text{if}\ x\in [-\mytilde{b}_1,0];\\
\mytilde{b}_1-x,\ &\text{if}\ x\in [0,\mytilde{b}_1];\\
0,\ &\text{otherwise},
\end{cases}
\quad 
h_2(x):=
\begin{cases}
x+\mytilde{b}_2-\mytilde{a}_2,\ &\text{if}\ x\in [-(\mytilde{b}_2-\mytilde{a}_2),0];\\
\mytilde{b}_2-\mytilde{a}_2-x,\ &\text{if}\ x\in [0,\mytilde{b}_2-\mytilde{a}_2];\\
0,\ &\text{otherwise},
\end{cases}
\]
and 
\[
h_3(x):=
\begin{cases}
x-(\mytilde{a}_2-\mytilde{b}_1),\ &\text{if}\ x\in [\mytilde{a}_2-\mytilde{b}_1,\mytilde{a}_2];\\
\mytilde{b}_1,\ &\text{if}\ x\in [\mytilde{a}_2,\mytilde{b}_2-\mytilde{b}_1]; \\
\mytilde{b}_2-x,\ &\text{if}\ x\in [\mytilde{b}_2-\mytilde{b}_1,\mytilde{b}_2];\\
0,\ &\text{otherwise}.
\end{cases}
\]
The subsequent proof is delineated into four steps:
 
\textbf{Step 1: Prove that $|b_1+b_2-a_2-(\mytilde{b}_1+\mytilde{b}_2-\mytilde{a}_2)|\leq \frac{\epsilon}{\sqrt{2\pi\cdot \min\{c_0,c\}}}$.}

We can apply the formulations in (\ref{eqn: f_formulation}) and (\ref{eqn: g_formulation}). For any $x\in [0,\min\{c,c_0\}]$, we observe that
\begin{equation}
\label{eqn: step1_temp}
(f\mycircledast f_-)(x)-(g\mycircledast g_-)(x)=b_1+b_2-a_2-(\mytilde{b}_1+\mytilde{b}_2-\mytilde{a}_2).
\end{equation}
Here, we use
 \[
 c\leq b_1\leq b_2-a_2,\quad  c\leq \mytilde{b}_1\leq \mytilde{b}_2-\mytilde{a}_2,
 \]
 and 
 \[
 c_0\leq a_2-b_1,\quad c_0\leq \mytilde{a}_2-\mytilde{b}_1.
 \]
Consequently, combining (\ref{eqn: step1_temp}) with (\ref{eqn: interval_est}) for $\beta_1:=0$ and $\beta_2:=\min\{c,c_0\}$, we obtain:
\[
\int_{0}^{\min\{c,c_0\}}(b_1+b_2-a_2-(\mytilde{b}_1+\mytilde{b}_2-\mytilde{a}_2))^2dx\leq \frac{\epsilon^2}{2\pi},
\]
which directly leads to 
\begin{equation}
\label{eqn: result1}
|b_1+b_2-a_2-(\mytilde{b}_1+\mytilde{b}_2-\mytilde{a}_2)|\leq \frac{\epsilon}{\sqrt{2\pi\cdot \min\{c_0,c\}}}.
\end{equation}

\textbf{Step 2: Prove that $|b_2-\mytilde{b}_2|\lesssim \frac{\epsilon}{\min\{c_0,c\}}$.}

We First claim:
\begin{equation}\label{eq:b22}
|b_2-\mytilde{b}_2|\leq c/2.
\end{equation}
 Given that $|b_2-\mytilde{b}_2|\leq c/2$ and $c\leq \min\{b_1,\mytilde{b}_1\}$, we can deduce:
\[
x\geq \min\{b_2,\mytilde{b}_2\}-c/2\geq \max\{b_2,\mytilde{b}_2\}-c,
\]
for any $x\in [\min\{b_2,\mytilde{b}_2\}-c/2,\min\{b_2,\mytilde{b}_2\}]$. It implies that, for $x\in [\min\{b_2,\mytilde{b}_2\}-c/2,\min\{b_2,\mytilde{b}_2\}]$, 
\[
x\geq b_2-c\geq b_2-b_1\geq b_2-a_2\geq b_1,
\]
and 
\[
x\geq \mytilde{b}_2-c\geq \mytilde{b}_2-\mytilde{b}_1\geq \mytilde{b}_2-\mytilde{a}_2\geq \mytilde{b}_1,
\]
based on the fact that
\[
c\leq \min\{b_1,\mytilde{b}_1\},\quad b_1\leq b_2-a_2\quad \text{and}\quad \mytilde{b}_1\leq \mytilde{b}_2-\mytilde{a}_2.
\]
 Consequently, based on  (\ref{eqn: f_formulation}) and (\ref{eqn: g_formulation}), for $x\in [\min\{b_2,\mytilde{b}_2\}-c/2,\min\{b_2,\mytilde{b}_2\}]$, we have:
\[
(f\mycircledast f_-)(x)-(g\mycircledast g_-)(x)=b_2-x-(\mytilde{b}_2-x)=b_2-\mytilde{b}_2,
\]
and hence we use  (\ref{eqn: interval_est}) to obtain
\[
\frac{c}{2}\cdot (b_2-\mytilde{b}_2)^2=\int_{\min\{b_2,\mytilde{b}_2\}-c/2}^{\min\{b_2,\mytilde{b}_2\}} ((f\mycircledast f_-)(x)-(g\mycircledast g_-)(x))^2dx\leq \frac{\epsilon^2}{2\pi},
\]
which implies
\begin{equation}
\label{eqn: b_2 and b_2'}
|b_2-\mytilde{b}_2|\,\,\leq\,\, \frac{\epsilon}{\sqrt{\pi \cdot c}}\lesssim \frac{\epsilon}{\min\{c_0,c\}}.
\end{equation}

It remains to prove (\ref{eq:b22}). Let us proceed by contradiction. Suppose that $|b_2-\mytilde{b}_2|> c/2$. Without loss of generality, assume $\mytilde{b}_2<b_2-c/2$. Applying the formulations in (\ref{eqn: f_formulation}) and (\ref{eqn: g_formulation}) and noting that $b_2-b_1<b_2-c/2$, for any $x\in [b_2-c/2,b_2]$,  we obtain:
\[
(f\mycircledast f_-)(x)-(g\mycircledast g_-)(x)=b_2-x.
\]
Invoking (\ref{eqn: interval_est}) with $\beta_1=b_2-c/2$ and $\beta_2=b_2$ yields:
\[
 \frac{c^3}{24} = \int_{b_2-c/2}^{b_2} ((f\mycircledast f_-)(x)-(g\mycircledast g_-)(x))^2dx\leq \frac{\epsilon^2}{2\pi},
\]
which contradicts with the condition in (\ref{eqn: epsilon}).

\textbf{Step 3: Prove that $|b_1-\mytilde{b}_1|\lesssim \frac{\epsilon}{\min\{c_0,c\}}$.}

First of all, we proceed to demonstrate that 
\[
|\mytilde{b}_1-b_1|\leq c_0/4.
\]
 Let us assume, contrary to our claim, that $|\mytilde{b}_1-b_1|> c_0/4$. Without loss of generality, suppose $\mytilde{b}_1<b_1-c_0/4$. Combining this with (\ref{eqn: b_2 and b_2'}) and the constraint on $\epsilon$ in (\ref{eqn: epsilon}), we derive:
\begin{equation}
\label{eqn: step3_temp}
\mytilde{b}_2-\mytilde{b}_1>\mytilde{b}_2-b_1+c_0/4\geq b_2-b_1+c_0/4- \frac{\epsilon}{\sqrt{2\pi \cdot c}}\geq b_2-b_1+c_0/6.
\end{equation}
For any $x\in \left[\mytilde{b}_2-\mytilde{b}_1-\frac{c_0}{6}, \mytilde{b}_2-\mytilde{b}_1-\frac{c_0}{12}\right]$, we can establish that:
\begin{equation}\label{eq:xfan1}
 x\geq \mytilde{b}_2-\mytilde{b}_1-c_0\overset{(a)}\geq \mytilde{b}_2-\mytilde{a}_2\geq \mytilde{b}_1,\quad x\leq \mytilde{b}_2-\mytilde{b}_1,
 \end{equation}
 and
\begin{equation}\label{eq:xfan2}
\quad x\geq \mytilde{b}_2-\mytilde{b}_1-\frac{c_0}{6}\overset{(b)}\geq b_2-b_1\geq b_2-a_2\geq b_1.
\end{equation}
Here $(a)$ follows from $c_0
 \leq  \dist(\mytilde{I_1}, \mytilde{I_2})=\mytilde{a}_2-\mytilde{b}_1$, and $(b)$ is based on (\ref{eqn: step3_temp}). 
Using the range of \(x\) given by (\ref{eq:xfan1}) and (\ref{eq:xfan2}), together with the piecewise formulas in
(\ref{eqn: f_formulation}) and (\ref{eqn: g_formulation}), we obtain, for
\[
x\in \left[\mytilde{b}_2-\mytilde{b}_1-\frac{c_0}{6}, \mytilde{b}_2-\mytilde{b}_1-\frac{c_0}{12}\right],
\]
 that
\[
(f\mycircledast f_-)(x)-(g\mycircledast g_-)(x)=b_2-x-\mytilde{b}_1\geq b_2-\mytilde{b}_2+\frac{c_0}{12}\geq \frac{c_0}{24}.
\]
The last inequality follows from (\ref{eqn: b_2 and b_2'}) together with the assumption on \(\epsilon\) in (\ref{eqn: epsilon}), namely,
\[
|b_2-\mytilde{b}_2|\,\,\leq\,\, \frac{\epsilon}{\sqrt{\pi\cdot c}}\,\,\leq\,\, \frac{c_0}{24}.
\]
 Thus invoking (\ref{eqn: interval_est}) with $\beta_1=\mytilde{b}_2-\mytilde{b}_1-\frac{c_0}{6}$ and $\beta_2=\mytilde{b}_2-\mytilde{b}_1-\frac{c_0}{12}$ yields:
\[
\frac{c_0^3}{6912}\leq \int_{\mytilde{b}_2-\mytilde{b}_1-\frac{c_0}{6}}^{\mytilde{b}_2-\mytilde{b}_1-\frac{c_0}{12}} ((f\mycircledast f_-)(x)-(g\mycircledast g_-)(x))^2dx\leq \frac{\epsilon^2}{2\pi},
\]
which contradicts with the condition in (\ref{eqn: epsilon}).

Therefore, we conclude that $|\mytilde{b}_1-b_1|\leq c_0/4$, which implies
\[
b_1+c_0/4\geq \mytilde{b}_1\geq b_1-c_0/4.
\]
For
\[
x\in \left[\mytilde{b}_2-\mytilde{b}_1-\frac{5c_0}{8},
\mytilde{b}_2-\mytilde{b}_1-\frac{c_0}{2}\right],
\]
we have
\[
x\geq \mytilde{b}_2-\mytilde{b}_1-\frac{5c_0}{8}
\geq \mytilde{b}_2-\mytilde{a}_2
\geq \mytilde{b}_1,
\qquad
x\leq \mytilde{b}_2-\mytilde{b}_1,
\]
and
\[
x\geq \mytilde{b}_2-\mytilde{b}_1-\frac{5c_0}{8}
\overset{(c)}{\geq}
b_2-b_1-\frac{15c_0}{16}
\geq b_2-a_2
\geq b_1.
\]
Moreover,
\[
x\leq \mytilde{b}_2-\mytilde{b}_1-\frac{c_0}{2}
\overset{(d)}{\leq}
b_2-b_1-\frac{3c_0}{16}.
\]
Here, \((c)\) and \((d)\) follow from
\[
|\mytilde{b}_1-b_1|\leq \frac{c_0}{4}
\quad \text{and} \quad
|\mytilde{b}_2-b_2|
\leq \frac{\epsilon}{\sqrt{\pi c}}
\leq \frac{c_0}{16}.
\]
Therefore, from the range of $x$ specified above, together with (\ref{eqn: f_formulation}) and (\ref{eqn: g_formulation}), it follows that, 
for $x\in [\mytilde{b}_2-\mytilde{b}_1-5c_0/8,\mytilde{b}_2-\mytilde{b}_1-c_0/2]$, we have 
\[
(f\mycircledast f_-)(x)-(g\mycircledast g_-)(x)=\mytilde{b}_1-b_1.
\]
Applying (\ref{eqn: interval_est}) with $\beta_1=\mytilde{b}_2-\mytilde{b}_1-5c_0/8$ and $\beta_2=\mytilde{b}_2-\mytilde{b}_1-c_0/2$ yields:
\[
\frac{c_0}{8}\cdot (b_1-\mytilde{b}_1)^2=\int_{\mytilde{b}_2-\mytilde{b}_1-5c_0/8}^{\mytilde{b}_2-\mytilde{b}_1-c_0/2} (f\mycircledast f_-)(x)-(g\mycircledast g_-)(x))^2 dx\leq \frac{\epsilon^2}{2\pi},
\]
which implies
\begin{equation}
\label{eqn: b_1 and b_1'}
|b_1-\mytilde{b}_1|\leq \frac{2\epsilon}{\sqrt{\pi \cdot c_0}}\lesssim \frac{\epsilon}{\min\{c_0,c\}}.
\end{equation}
\textbf{Step 4: Prove that $|a_2-\mytilde{a}_2|\lesssim \frac{\epsilon}{\sqrt{\min\{c_0,c\}}}$.}

To conclude our proof, we can leverage the results established in (\ref{eqn: result1}), (\ref{eqn: b_2 and b_2'}), and (\ref{eqn: b_1 and b_1'}). By synthesizing these findings, we can directly infer that:
\[
|a_2-\mytilde{a}_2|\lesssim \frac{\epsilon}{\sqrt{\min\{c_0,c\}}}.
\]
 This final step completes our demonstration of the inequality (\ref{eqn: new_est}), thereby concluding the proof.
\end{proof}

\section{discrete Fourier phase retrieval for real signals}
\label{sec:disF}

In this section, we establish a result on discrete Fourier phase retrieval 
for real-valued signals, which may be of independent interest and will also 
be used to construct counterexamples for Theorem~\ref{th:fanli} in 
Section~\ref{sec: sec5}. We further introduce the \emph{turnpike problem} 
and several related results that will be utilized in Section~\ref{sec: sec6}.

\subsection{Discrete Fourier phase retrieval }

 Let $\mathbf{x} = (x_{0}, x_{1}, \ldots, x_{d-1})^T \in \mathbb{F}^d$, where $\mathbb{F} \in \{\mathbb{R}, \mathbb{C}\}$. Denote the discrete Fourier transforms by $\hat{\mathbf{x}}(\omega)$, defined by
\begin{equation}\label{eq:dftxy}
\hat{\mathbf{x}}(\omega) := \sum_{j=0}^{d-1} x_j \cdot \exp(-2\pi \mathrm{i}  j \omega / d), \qquad \text{for all}\  \omega \in \mathbb{R}.
\end{equation}
Let $X(z)$  denote the $Z$-transform of ${\mathbf x}$, defined by
\begin{equation}
\label{eqn: x_z}
X(z) = \sum_{j=0}^{d-1} x_j \cdot z^j,  \qquad \text{for all}\ z \in \mathbb{C}.
\end{equation}
Then for any $w\in \mathbb{R}$, $\hat{\mathbf{x}}(\omega)=X(\exp(-2\pi \mathrm{i}w/d)) $. The goal of discrete Fourier phase retrieval is to determine $\mathbf{x}$ from magnitude-only measurements of its discrete Fourier transform, that is, from $\abs{\hat{\mathbf{x}}(\omega)}$ for all $\omega \in \mathbb{R}$.

For the case ${\mathbb F} = {\mathbb C}$, the theorem below gives the necessary and sufficient conditions for $\abs{\hat{\mathbf{x}}(\omega)}=\abs{\hat{\mathbf{y}}(\omega)}$ to hold, where $\mathbf{x}, \mathbf{y}\in {\mathbb C}^d$.

\begin{thm}\label{th:review}\cite[Theorem 3.7]{Review}
 Let 
 \[
 \mathbf{x} = (x_0, x_1, \ldots, x_{d-1})^T \in \mathbb{C}^d\quad \text{and}\quad \mathbf{y} = (y_0, y_1, \ldots, y_{d-1})^T \in \mathbb{C}^d.
 \] 
 Let $X(z)$ and $Y(z)$ denote their respective $Z$-transforms, defined by
\[
X(z) = \sum_{j=0}^{d-1} x_j \cdot z^j, \quad Y(z) = \sum_{j=0}^{d-1} y_j \cdot z^j,\qquad \text{for all}\ z \in \mathbb{C}.
\]
Then $|\hat{\mathbf x}(\omega)| = |\hat{\mathbf y}(\omega)|$ for all $\omega \in {\mathbb R}$ if and only if there exist a factorization $Y = Y_1 \cdot Y_2$, a constant $\gamma$ with $|\gamma| = 1$, and $\tau \in \mathbb{Z}$ such that
\begin{equation}\label{eq:XY1Y2}
X(z) = \gamma \cdot z^\tau \cdot Y_1(z) \cdot \overline{Y_2(\overline{z}^{-1})}.
\end{equation}
\end{thm}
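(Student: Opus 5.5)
The plan is to reduce both directions to a statement about Laurent polynomials on the unit circle. Since $\hat{\mathbf x}(\omega)=X(e^{-2\pi\ri\omega/d})$, the identity $|\hat{\mathbf x}(\omega)|=|\hat{\mathbf y}(\omega)|$ for all $\omega\in\mathbb R$ says exactly that $|X(z)|=|Y(z)|$ for all $z$ on the unit circle $|z|=1$. On the circle, $\overline{z}^{-1}=z$, so $\overline{X(z)}=\overline{X}^{*}(z^{-1})$, where we write $X^{*}(z):=\overline{X(\overline z)}$ for the polynomial with conjugated coefficients. Thus the hypothesis is equivalent to the Laurent polynomial identity
\[
X(z)\,\overline{X(\overline{z}^{-1})}=Y(z)\,\overline{Y(\overline{z}^{-1})}
\]
on the circle. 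Both sides are rational functions, so the identity extends to all of $\mathbb C\setminus\{0\}$.

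\emph{Sufficiency.} Assume \eqref{eq:XY1Y2} holds. For $|z|=1$ we have $|\gamma|=|z^\tau|=1$ and $|\overline{Y_2(\overline z^{-1})}|=|Y_2(z)|$. Hence $|X(z)|=|Y_1(z)|\,|Y_2(z)|=|Y(z)|$ on the circle, which gives the claim.

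\emph{Necessity.} Multiply the identity by $z^{d-1}$ to obtain the polynomial identity
\[
X(z)\,\tilde X(z)=Y(z)\,\tilde Y(z),\qquad \tilde X(z):=z^{d-1}\overline{X(\overline z^{-1})}.
\]
The key observation is that the zeros of $\tilde X$ are the reflections $1/\overline{\alpha}$ of the nonzero zeros $\alpha$ of $X$, together with extra zeros at $0$ accounting for the degree drop. Factor $X(z)=c_X z^{k}\prod_j(z-\alpha_j)$ and $Y(z)=c_Y z^{l}\prod_j(z-\beta_j)$ with all $\alpha_j,\beta_j\neq 0$. Comparing the zero multisets of both sides as a union of reflection-paired sets, the nonzero zeros of $XX^\sharp$ split into pairs $\{\alpha,1/\overline\alpha\}$. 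Points on the circle are self-paired and carry even multiplicity.

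I would then argue by induction on the number of zeros, or by a direct grouping according to orbits $\{\beta,1/\overline\beta\}$. For each orbit, the multiplicity of $\beta$ plus that of $1/\overline\beta$ among the zeros of $X$ equals the same count for $Y$. So $X$ is obtained from $Y$ by replacing some of the roots $\beta_j$ with $1/\overline{\beta_j}$. Define $Y_2=\prod_{j\in S}(z-\beta_j)$ over the set $S$ of flipped roots, and $Y_1=Y/Y_2$. Using
\[
\overline{(\overline z^{-1}-\beta)}=z^{-1}-\overline\beta=-\overline\beta z^{-1}\,(z-1/\overline\beta),
\]
the product $Y_1(z)\overline{Y_2(\overline z^{-1})}$ has exactly the zeros of $X$, up to a monomial factor $z^{\tau}$ (which also absorbs the difference between $k$ and $l$) and a constant. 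The constant must have modulus one because $|X|=|Y|$ on the circle and the monomial has modulus one there.

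The main obstacle is the careful bookkeeping of the multiset comparison. One must show that the multiplicity matching within each reflection orbit forces the zeros of $X$ to be a choice of one representative per factor of $Y$, that is, that multiplicities split consistently. This follows from $\operatorname{mult}_\alpha(X)+\operatorname{mult}_{1/\overline\alpha}(X)=\operatorname{mult}_\alpha(Y)+\operatorname{mult}_{1/\overline\alpha}(Y)$ applied orbit by orbit. One can then flip $\max(0,\operatorname{mult}_\alpha(Y)-\operatorname{mult}_\alpha(X))$ copies of $\alpha$ into $1/\overline\alpha$, and symmetrically for $1/\overline\alpha$. Zeros on the circle need no flipping, and zeros at $0$ are absorbed into $z^\tau$.
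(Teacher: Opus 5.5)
The paper does not prove this statement. It is quoted from \cite[Theorem 3.7]{Review} and used only as a black box in the proof of Theorem~\ref{th:realfac}, so there is no in-paper route to compare against. Judged on its own, your argument is the standard zero-flipping proof, and it is correct.

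Write $m_P(\alpha)$ for the multiplicity of $\alpha$ as a zero of a polynomial $P$. Comparing multiplicities of a nonzero point $\alpha$ in $X\tilde X=Y\tilde Y$ gives exactly $m_X(\alpha)+m_X(1/\overline{\alpha})=m_Y(\alpha)+m_Y(1/\overline{\alpha})$, which reduces to $m_X(\alpha)=m_Y(\alpha)$ when $|\alpha|=1$. Hence if $m_Y(\alpha)\ge m_X(\alpha)$, then $m_Y(1/\overline{\alpha})\le m_X(1/\overline{\alpha})$. So only one side of each orbit is ever flipped, and your rule lands exactly on the zeros of $X$. The leftover constant is unimodular because $|Y_1(z)\overline{Y_2(\overline{z}^{-1})}|=|Y(z)|=|X(z)|$ on the circle.

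The only things to tidy are the following.
\begin{itemize}
\item Notation: $\overline{X}^{*}(z^{-1})$ should read $X^{*}(z^{-1})$, and $XX^\sharp$ should read $X\tilde X$.
\item The degenerate case $\mathbf y=0$: there the modulus argument for the constant is void, but $X\equiv 0$ and one takes $Y_1=0$.
\end{itemize}

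Compared with merely invoking the cited theorem, your construction produces $Y_2$ by an explicit rule, and this transfers at once to the real setting of Theorem~\ref{th:realfac}. Suppose $\mathbf x,\mathbf y$ are real. Then $m_X(\overline{\alpha})=m_X(\alpha)$ and $m_Y(\overline{\alpha})=m_Y(\alpha)$. The orbits $\{\alpha,1/\overline{\alpha}\}$ and $\{\overline{\alpha},1/\alpha\}$ coincide only for real $\alpha$. So your multiset of flipped roots is automatically closed under conjugation. Then $Y_1$ and $Y_2$ have real coefficients, $\overline{Y_2(\overline{z}^{-1})}=Y_2(1/z)$, and the unimodular constant is real, hence equal to $\pm1$.

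This gives Theorem~\ref{th:realfac} directly. It avoids the paper's after-the-fact regrouping of an arbitrary complex factorization through the auxiliary factor $Y_3$. That regrouping is the most delicate step of the paper's proof, and it is written out there only for simple roots.
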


Theorem~\ref{th:review} treats the general case of complex vectors. In what follows, we restrict to the real setting, where the additional structure yields a more precise result.
\begin{thm}\label{th:realfac}
Let $\mathbf{x} = (x_0, x_1, \ldots, x_{d-1})^T\in \mathbb{R}^d$ and $\mathbf{y} = (y_0, y_1, \ldots, y_{d-1})^T \in \mathbb{R}^d$.
 Let $X(z), Y(z)$,  $\hat{\mathbf{x}}(\omega)$ and $\hat{\mathbf{y}}(\omega)$ be defined in Theorem \ref{th:review}.
Then $|\hat{\mathbf x}(\omega)| = |\hat{\mathbf y}(\omega)|$ for all $\omega \in {\mathbb R}$ if and only if there exist a real factorization $Y = R_1 \cdot R_2$, 
a constant $\gamma_1\in \{1,-1\}$ and $\tau_1 \in \mathbb{Z}$ such that
\begin{equation}\label{eq:XR1R2}
X(z) =  \gamma_1\cdot z^{\tau_1} \cdot R_1(z) \cdot {R_2(1/{z})},
\end{equation}
where $R_1$  and $R_2$   are real-coefficient polynomials.
\end{thm}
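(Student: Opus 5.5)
The plan is to derive Theorem~\ref{th:realfac} from Theorem~\ref{th:review}, which describes all $\mathbf y$ with the same Fourier magnitudes through root-flipping, and then to use the realness of $\mathbf x$ and $\mathbf y$ to make that factorization real.

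\emph{Sufficiency.} Assume \eqref{eq:XR1R2} holds. Put $z=e^{-2\pi\ri\omega/d}$, so $|z|=1$ and $1/z=\conj z$. Since $R_2$ has real coefficients, $R_2(1/z)=R_2(\conj z)=\conj{R_2(z)}$, and hence $|R_2(1/z)|=|R_2(z)|$. Then $|X(z)|=|R_1(z)|\,|R_2(z)|=|Y(z)|$, which gives $|\hat{\mathbf x}(\omega)|=|\hat{\mathbf y}(\omega)|$.

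\emph{Necessity.} Assume $|\hat{\mathbf x}|=|\hat{\mathbf y}|$ on $\R$. I will avoid carrying over the factorization of Theorem~\ref{th:review} directly, and instead work with the zeros of $X$ and $Y$.
\begin{enumerate}[(1)]
\item Set $X^*(z):=z^{d-1}X(1/z)$. Because $\mathbf x$ is real, $|X(e^{\ri\theta})|^2=X(e^{\ri\theta})X(e^{-\ri\theta})$. So the Laurent polynomials $X(z)X(1/z)$ and $Y(z)Y(1/z)$ agree on the unit circle, and therefore agree identically. Multiplying by $z^{d-1}$ gives
\[
X(z)\,X^*(z)=Y(z)\,Y^*(z).
\]
\item Factor $Y$ over $\R$ as $Y=c\,z^{k}\prod_j q_j$, where each $q_j$ is an irreducible real factor, either linear $z-r$ with $r\neq0$ or quadratic with a conjugate pair of roots. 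For a nonzero root $\rho$ of $X$, the identity in (1) forces it to appear among the roots of $Y$ or of $Y^*$, whose nonzero roots are the $1/\bar\rho_k$ for roots $\rho_k$ of $Y$. Since $X$ is real, its nonzero roots split into real roots and conjugate pairs. I will then show, by induction on degree, cancelling one irreducible real factor at a time from both sides of the identity in (1), that the real irreducible factors of $X$ are obtained from those of $Y$ by choosing a subset $S$ and replacing each $q_j$, $j\in S$, by its reciprocal $q_j^*$. The key observation is that a real irreducible factor $q$ of $X$ divides $YY^*$, so it divides some $q_j$ or some $q_j^*$; irreducibility over $\R$ then gives equality up to a real scalar. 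The point of working with real irreducible factors is that a conjugate pair is always kept or flipped together, so every factor stays real.
\item Set $R_1:=c\prod_{j\notin S}q_j$ and $R_2:=\prod_{j\in S}q_j$. Since $q_j^*(z)=z^{\deg q_j}q_j(1/z)$, we get $X(z)=\lambda\, z^{\tau_1}R_1(z)R_2(1/z)$ for some real $\lambda$ and integer $\tau_1$; the powers of $z$ coming from $z^k$ and from zero roots of $X$ are absorbed into $\tau_1$.
\item Evaluate both sides in modulus on the unit circle. This gives $|\lambda|=1$, so $\lambda=\gamma_1\in\{1,-1\}$, and the factors $\pm1$ may be moved into $R_1$ as needed.
\end{enumerate}

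The main obstacle is the cancellation in step (2): factors with roots on the unit circle, repeated roots, and self-reciprocal factors ($q=\pm q^*$) must be handled without mismatching multiplicities. I would handle this by arguing with multiplicities via unique factorization in $\R[z]$. Writing $p_j:=q_j q_j^*$, the identity in (1) says the multiset of these reciprocal-closed pairs agrees for $X$ and $Y$, and the choice of which member of each pair goes to $X$ then defines $S$. An alternative route is to take the complex factorization $Y=Y_1Y_2$ from Theorem~\ref{th:review} and pair conjugate roots; I expect the self-contained real-irreducible argument above to be cleaner.
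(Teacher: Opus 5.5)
Your proof is correct, but it follows a genuinely different route from the paper's.

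\emph{The paper's route.} The paper does not work with the autocorrelation identity directly. It starts from the complex factorization $X(z)=\gamma z^\tau Y_1(z)\overline{Y_2(\overline{z}^{-1})}$ supplied by Theorem~\ref{th:review}. It then splits $Y_2=R_2Y_3$, where $R_2$ collects the real roots and conjugate pairs inside $Y_2$ and $Y_3$ collects the leftover unpaired non-real roots, and moves $Y_3$ into $R_1:=Y_1Y_3$. Next it argues, by counting conjugate pairs, that $Y_3$ is self-reciprocal up to a unimodular constant, $z^n\overline{Y_3(\overline{z}^{-1})}=\gamma_0 Y_3(z)$. Hence moving $Y_3$ back only changes $\gamma$ and $\tau$. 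Realness of $X$, $R_1$ and $R_2$ then forces $\gamma\gamma_0\in\{1,-1\}$.

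\emph{Your route.} You bypass Theorem~\ref{th:review} entirely. From $X(z)X(1/z)=Y(z)Y(1/z)$ and unique factorization in $\mathbb{R}[z]$, you compare multiplicities within each reciprocal class $\{q,q^*\}$ of real irreducible factors. Self-reciprocal classes, which include all factors with roots on the unit circle, must have equal multiplicity in $X$ and $Y$. Non-self-reciprocal classes only need matching totals, $\alpha+\beta=\alpha'+\beta'$, and you then choose which copies to flip. Your induction is sound: in both cases $q\sim q_j$ and $q\sim q_j^*$ one has $q(z)q(1/z)=\lambda^2 q_j(z)q_j(1/z)$ with $\lambda\in\mathbb{R}\setminus\{0\}$, so the cancellation is legitimate.

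\emph{What each buys.} Your argument is self-contained, and it handles repeated roots and unimodular roots cleanly. The paper treats those only for simple roots and asserts that the general case is straightforward. The paper's argument is short given the known complex result, and it makes explicit that every complex ambiguity of a real signal is already realized by a real factorization.

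\emph{Bookkeeping.} With your definitions, $R_1R_2=Y/z^k$, so put the factor $z^k$ into $R_1$ (adjusting $\tau_1$) to get $Y=R_1R_2$ literally. Also dispose of the trivial case $\mathbf y=0$ separately, since your modulus comparison in step (4) needs $Y\not\equiv 0$.
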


\begin{proof}

We first prove the ``if'' direction: assume that there exist real-coefficient polynomials $R_1$ and $R_2$ such that $Y(z)=R_1(z)\cdot R_2(z)$ and $X(z) = \gamma_1\cdot  z^{\tau_1} \cdot R_1(z) \cdot {R_2(1/{z})}$. For any $\omega \in {\mathbb R}$, we have $\abs{R_2(1/\exp({\ri \omega}))}=\abs{R_2(\exp({-\ri \omega}))}=\abs{\overline{R_2(\exp({\ri \omega}))}}=\abs{R_2(\exp({\ri \omega}))}$. 
Hence,
\[
\begin{aligned}
\abs{X(\exp({\ri \omega}))}&=\abs{\exp({\ri\tau_1 \omega})\cdot R_1(\exp({\ri \omega})) \cdot {R_2(1/{\exp({\ri \omega})})}}\\
&=\abs{R_1(\exp(-{\ri \omega}))}\cdot \abs{R_2(\exp({\ri \omega}))}\\
&=\abs{Y(\exp({\ri\omega}))},
\end{aligned}
\]
which implies that $|\hat{\mathbf x}(\omega)| = |\hat{\mathbf y}(\omega)|$ for all $\omega \in {\mathbb R}$.

Conversely, we now turn to the ``only if'' direction.
Assume that $|\hat{\mathbf x}(\omega)| = |\hat{\mathbf y}(\omega)|$ for all $\omega \in {\mathbb R}$.
According to Theorem \ref{th:review}, 
there exist a factorization $Y = Y_1 \cdot Y_2$, a constant $\gamma$ with $|\gamma| = 1$, and $\tau \in \mathbb{Z}$ such that
\begin{equation}\label{eq:Y12}
X(z) = \gamma \cdot z^\tau \cdot Y_1(z) \cdot \overline{Y_2(\overline{z}^{-1})}.
\end{equation}

Since $\mathbf{x}$ and $\mathbf{y}$ are real-valued vectors, $X(z)$ and $Y(z)$ are polynomials with real coefficients. Our objective is to show that the complex factors $Y_1$ and $Y_2$ can be rearranged into a real-coefficient factorization $Y(z) = R_1(z)\cdot R_2(z)$ that satisfies \eqref{eq:XR1R2}.

Without loss of generality, We  assume that the coefficient of the leading term of $Y_2$ is $1$.
According to the Fundamental Theorem of Algebra, we can write $Y_2$ in the following form
\begin{equation}\label{eq:R2}
Y_2(z)=R_2(z)\cdot \prod_{j=1}^k(z-\rho_j)^{u_j}.
\end{equation}
Here, $R_2$ is a polynomial with real coefficients; $\rho_j$, $j=1,\ldots,k$, 
are the non-real roots of $Y_2$ such that no two of them are complex conjugates 
of each other, i.e., $\rho_{j_1}\neq \overline{\rho_{j_2}}$ for all 
$j_1,j_2=1,\ldots,k$; and $u_j\in \mathbb{Z}_{+}$, $j=1,\ldots,k$, 
denotes the multiplicity of $\rho_j$, respectively.

Take $R_1$ as:
\[
R_1(z)=Y_1(z)\cdot Y_3(z),
\]
 where 
 \[
 Y_3(z):=\prod_{j=1}^k(z-\rho_j)^{u_j}
 \]
  and $R_2$  as defined in (\ref{eq:R2}). Then we have
\[
\begin{aligned}
Y(z)&=Y_1(z)\cdot Y_2(z)=Y_1(z)\cdot R_2(z)\cdot Y_3(z)=Y_1(z)\cdot  Y_3(z)\cdot R_2(z)=R_1(z)\cdot R_2(z).
\end{aligned}
\]
Since both $Y$ and $R_2$   are polynomials with real coefficients,
it follows that $R_1(z)$ must also have real coefficients.

It remains to prove (\ref{eq:XR1R2}). We claim that 
\begin{equation}\label{eq:midY3}
z^n\cdot \overline{Y_3(\overline{z}^{-1})}=\gamma_0\cdot Y_3(z),
\end{equation}
where $n$ is the degree of $Y_3$ and $\gamma_0\in {\mathbb C}$ is a constant satisfying $\abs{\gamma_0}=1$.
 This implies   (\ref{eq:XR1R2}). Indeed,
 according to (\ref{eq:Y12}) and (\ref{eq:midY3}), we have
 \[
 \begin{aligned}
X(z)& = \gamma \cdot z^\tau \cdot Y_1(z) \cdot \overline{Y_2(\overline{z}^{-1})}\\
&=\gamma \cdot z^\tau \cdot Y_1(z) \cdot \overline{R_2(\overline{z}^{-1})} \cdot 
\overline{Y_3(\overline{z}^{-1})}\\
&=
\gamma\cdot z^{\tau-n} \cdot Y_1(z) \cdot {R_2(1/{z})} \cdot 
z^n\cdot \overline{Y_3(\overline{z}^{-1})}\\
&=
\gamma\cdot \gamma_0\cdot z^{\tau-n} \cdot Y_1(z) \cdot {R_2(1/{z})} \cdot Y_3(z)\\
&=\gamma\cdot \gamma_0\cdot z^{\tau-n}\cdot R_1(z) \cdot R_2(1/z)\\
&= \gamma_1\cdot z^{\tau_1}\cdot R_1(z) \cdot R_2(1/z).
\end{aligned}
\]
Here, we take $\gamma_1:=\gamma\cdot \gamma_0$ and $\tau_1:=\tau-n$.
 Since $X$, $R_1$, and $R_2$ are all polynomials with real coefficients, the product $\gamma_1=\gamma \cdot \gamma_0$ must be real. Given that $|\gamma| = |\gamma_0| = 1$, it follows that $\gamma_1 \in \{1, -1\}$.

It remains to establish \eqref{eq:midY3}.
Let $Y_3(z)$ be rewritten as:
\[
Y_3(z)=\prod_{j=1}^k(z-\rho_j)^{u_j}=c_0+c_1\cdot z+\cdots+c_n \cdot z^n,
\]
where $n={\rm deg}(Y_3)$ and $c_n=1$.
Then we have
\[
\overline{Y_3(\overline{z}^{-1})}=\overline{c_0}+\overline{c_1}\cdot z^{-1}+\cdots+\overline{c_n}\cdot z^{-n},
\]
which implies
\[
\widetilde{Y_3}(z):=z^n\cdot \overline{Y_3(\overline{z}^{-1})}
\]
is a polynomial in $z$ with degree $n$.

 Let $Z(Y_3)$ and $Z(\widetilde{Y}_3)$ denote the multisets of roots of $Y_3$ and $\widetilde{Y_3}$, respectively, including multiplicities. 
 We claim that  
 \begin{equation}\label{eq:zy3c0cn}
 Z(Y_3) = Z(\widetilde{Y_3}),\quad \text{and}\quad  \abs{c_0}=1,
 \end{equation}
 which directly implies (\ref{eq:midY3}).

 We now proceed to prove (\ref{eq:zy3c0cn}).
Since $Z(Y_3)$ and $Z(\widetilde{Y_3})$ are multisets of the same cardinality, 
to establish $Z(Y_3) = Z(\widetilde{Y_3})$ it suffices to show that each 
$\rho \in Z(Y_3)$ belongs to $Z(\widetilde{Y_3})$ with the same multiplicity.

For clarity, we present the proof assuming that
each root $\rho_j$  of $Y_3$   is simple, i.e., $u_j=1, j=1,\ldots,k$. Extending the result to the general case is straightforward.

First, if there exists a root $\rho \in Z(Y_3)$ such that $|\rho| = 1$, then $\overline{\rho} = \rho^{-1}$. 
 It follows that $Y_3(\overline{\rho}^{-1}) = Y_3(\rho) = 0$, which implies $\conj{Y_3(\overline{\rho}^{-1})}=0$.
  Consequently, $\widetilde{Y_3}(\rho) = \rho^n\cdot \overline{Y_3(\overline{\rho}^{-1})} = 0$, and thus $\rho \in Z(\widetilde{Y_3})$.

Next, consider the case $\rho \in Z(Y_3)$ with $0<\abs{\rho}\neq 1$. We claim that 
\begin{equation}
\label{eqn: claim_temp}
\{\overline{\rho},\rho^{-1}\}\subset Z(Y_1)\quad \text{and} \quad \{\overline{\rho}^{-1},\rho\}\subset Z(Y_3).
\end{equation}
Thus by direct calculation, the constant term $c_0$ of $Y_3$ satisfies $\abs{c_0}=1$. According to the definition of  $\widetilde{Y_3}(z)$,  
 the fact that $\rho, \overline{\rho}^{-1}\in Z(Y_3)$ implies that $\overline{\rho}^{-1},\rho\in Z(\widetilde{Y_3})$.  Therefore, \eqref{eq:zy3c0cn} follows. 

The only thing left is to prove \eqref{eqn: claim_temp}. First of all, we show that 
$\rho \in Z(Y_3)$ implies $\overline{\rho}, \rho^{-1} \in Z(Y_1)$. 
The well-known fact that the non-real roots of a real polynomial occur in 
conjugate pairs plays a key role in our argument. Since $Y/R_2 = Y_1 \cdot Y_3$ 
is a polynomial with real coefficients and $\overline{\rho} \notin Z(Y_3)$ by 
construction, we must have $\overline{\rho} \in Z(Y_1)$. Since 
$\rho \in Z(Y_3)$, it follows directly that $\overline{\rho}^{-1} \in 
Z(\widetilde{Y_3})$.
Furthermore, since $X(z)$ in \eqref{eq:Y12} has real coefficients, the roots 
of the product $Y_1 \cdot \widetilde{Y_3}$ must occur in conjugate pairs. Hence 
the conjugate $\rho^{-1}$ of the root $\overline{\rho}^{-1} \in Z(\widetilde{Y_3})$ 
must belong to $Z(Y_1) \cup Z(\widetilde{Y_3})$. We conclude that 
$\rho^{-1} \in Z(Y_1)$, since $\rho^{-1} \in Z(\widetilde{Y_3})$ would imply 
$\overline{\rho} \in Z(Y_3)$, contradicting $\overline{\rho} \notin Z(Y_3)$.
 
Then we prove $\overline{\rho}^{-1}\in Z(Y_3)$, which complete the proof of \eqref{eqn: claim_temp}. 
%Specifically, the multiplicity of $\rho^{-1}$ as a root of $Y_1$ is strictly less than that of its conjugate, $\overline{\rho}^{-1}$.  may not be correct!
Following from the fact that $\overline{\rho}^{-1} \in Z(\widetilde{Y_3})$ and the product $\gamma\cdot  Y_1\cdot  \widetilde{Y_3}$ has real coefficients, these conditions imply that $\rho^{-1}$ is a root of $Y_1$ whose conjugate $\overline{\rho}^{-1}$ resides in $Z(\widetilde{Y_3})$ rather than $Z(Y_1)$.
 To restore the conjugate symmetry required for the real-coefficient polynomial $Y_1\cdot Y_3$, this deficit must be compensated by including $\overline{\rho}^{-1}$ in $Z(Y_3)$.

\end{proof}

%\textcolor{red}
{Let $\mathbf{x} = (x_{0}, x_{1}, \ldots, x_{d-1})^T \in \mathbb{R}^d$ 
and $\mathbf{y} = (y_{0}, y_{1}, \ldots, y_{d-1})^T \in \mathbb{R}^d$. 
In analogy with Definition~\ref{def: unique_recovery} and based on Proposition~3.3 
in~\cite{Review}, we restrict to the real case and say that $\mathbf{x}$ and $\mathbf{y}$ 
are \emph{equivalent}, written $\mathbf{x} \sim \mathbf{y}$, if
\begin{equation}
    \mathbf{y} = \gamma \cdot T_{\tau} \cdot \mathbf{x} \quad \text{or} \quad 
    \mathbf{y} = \gamma \cdot T_{\tau} \cdot R\mathbf{x}
\end{equation}
for some $\gamma \in \{-1, +1\}$ and $\tau \in \mathbb{Z}$, where the reflection 
operator $R$ and the shift operator $T_{\tau}$ are defined by
$$
(R\mathbf{x})_j = x_{-j \bmod d}, \qquad (T_{\tau}\mathbf{x})_j = x_{(j-\tau) \bmod d},
\qquad j \in \{0, \ldots, d-1\}.
$$
 \\
\indent According to Theorem \ref{th:realfac}, we can directly get the following corollary:
\begin{coro}\label{co:ine}
Let $\mathbf{x} = (x_0, x_1, \ldots, x_{d-1})^T \in \mathbb{R}^d$ 
and $\mathbf{y} = (y_0, y_1, \ldots, y_{d-1})^T \in \mathbb{R}^d$. 
Let $X(z)$, $Y(z)$, $\hat{\mathbf{x}}(\omega)$, and 
$\hat{\mathbf{y}}(\omega)$ be as defined in Theorem~\ref{th:review}. 
Suppose that $|\hat{\mathbf{x}}(\omega)| = |\hat{\mathbf{y}}(\omega)|$ 
for all $\omega \in \mathbb{R}$, and write $Y(z) = R_1(z) \cdot R_2(z)$ 
and
\begin{equation}\label{eq:XR1R2_new}
    X(z) = \gamma \cdot z^{\tau} \cdot R_1(z) \cdot R_2(1/z),
\end{equation}
for some $\gamma \in \{-1, +1\}$ and $\tau \in \mathbb{Z}$, where 
$R_1(z)$ and $R_2(z)$ are polynomials with real coefficients.
Then $\mathbf{x}$ and $\mathbf{y}$ are inequivalent ($\mathbf{x}\not\sim\mathbf{y}$) if and only if 
\[
1\le \deg(R_1)<\deg(Y) \quad \text{with}\quad \gamma_1\cdot z^{\tau_1}\cdot  R_1(1/z)\not\equiv R_1(z) 
\]
for any integer $\tau_1$ and $\gamma_1\in \{-1,+1\}$, and
\[
\gamma_2\cdot z^{\tau_2}\cdot  R_2(1/z)\not\equiv R_2(z) 
\]
for any integer $\tau_2$ and $\gamma_2\in \{-1,+1\}$.
\end{coro}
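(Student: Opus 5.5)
The plan is to translate the equivalence relation $\sim$ into statements about $Z$-transforms and then read off when the factorization of Theorem~\ref{th:realfac} produces a trivial ambiguity. First compute the $Z$-transforms of the trivial transforms. For a shift, $T_\tau\mathbf{x}$ has transform $z^{\tau}X(z)$ modulo $z^d-1$. Since the paper works with $\hat{\mathbf{x}}(\omega)$ for all real $\omega$, and compares polynomials as actual polynomials, I will interpret trivial equivalence as $Y(z)=\pm z^{\tau}X(z)$ or $Y(z)=\pm z^{\tau}X(1/z)$ with $\tau\in\mathbb{Z}$; this is the polynomial version of the shift/reflection ambiguity, as in Proposition~3.3 of \cite{Review}. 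So $\mathbf{x}\sim\mathbf{y}$ will mean that one of these two identities holds.

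\emph{Equivalence implies the failure of the stated condition.} Suppose $\mathbf{x}\sim\mathbf{y}$.

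In the shift case, $X=\pm z^{\tau}Y$. I take the factorization $R_1=Y$, $R_2=1$, which gives $\deg R_1=\deg Y$. For a \emph{given} factorization, $\gamma z^{\tau}R_1(z)R_2(1/z)=\pm z^{\tau'}R_1(z)R_2(z)$ forces $R_2(1/z)=\pm z^{s}R_2(z)$. That is exactly the negation of the condition on $R_2$.

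In the reflection case, $X(z)=\pm z^{\tau}R_1(1/z)R_2(1/z)$. Comparing with \eqref{eq:XR1R2_new} gives $R_1(z)=\pm z^{s}R_1(1/z)$, the negation of the condition on $R_1$. The boundary case $\deg R_1=0$ is reflection-type, with $R_1$ constant and therefore self-reciprocal.

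\emph{Inequivalence implies the condition.} For the converse I argue by contraposition. Suppose one of the conditions fails; I show $\mathbf{x}\sim\mathbf{y}$.

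If $\deg R_1=\deg Y$, then $R_2$ is a constant $c$, and $X=\gamma c z^{\tau}R_1=\pm z^{\tau}Y$ up to the scalar. The scalar must be $\pm1$ because the $\ell^2$ norms agree by Parseval. This is a shift.

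If $\deg R_1=0$, then symmetrically $X=\pm z^{\tau}Y(1/z)$. This is a reflection.

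If $R_1$ is self-reciprocal, i.e. $R_1(z)=\pm z^{s}R_1(1/z)$, substitute to get $X=\pm z^{\tau+s}R_1(1/z)R_2(1/z)=\pm z^{\tau'}Y(1/z)$, a reflection.

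If $R_2$ is self-reciprocal, then $R_2(1/z)=\pm z^{-s}R_2(z)$, and $X=\pm z^{\tau'}Y$, a shift.

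\emph{Main obstacle.} The delicate point is that the statement is phrased for a fixed factorization, while equivalence only guarantees that some factorization is trivial. I therefore need to check that whenever $X=\pm z^{\tau}Y$ holds, every admissible factorization $Y=R_1R_2$ with $X=\gamma z^{\tau}R_1(z)R_2(1/z)$ has $R_2$ self-reciprocal. Cancelling $R_1$ gives $R_2(1/z)=\pm z^{\tau'-\tau}R_2(z)$, which is what is needed. The reflection case is analogous after cancelling $R_2(1/z)$.

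A second subtlety is matching $z^\tau$ with a cyclic shift modulo $d$. Here I will use that $\deg X$ and $\deg Y$ are both below $d$, and that the supports have equal length, so the integer shift stays within the index range.
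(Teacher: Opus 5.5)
Your algebra is correct. Since the paper offers no argument beyond ``directly from Theorem~\ref{th:realfac}'', it supplies the missing content.
\begin{itemize}
\item From $X=\pm z^{\sigma}Y$, cancelling $R_1$ in $\gamma z^{\tau}R_1(z)R_2(1/z)=\pm z^{\sigma}R_1(z)R_2(z)$ makes $R_2$ self-reciprocal.
\item From $X=\pm z^{\sigma}Y(1/z)$, cancelling $R_2(1/z)$ makes $R_1$ self-reciprocal.
\item Conversely, self-reciprocity of $R_1$ (resp.\ $R_2$) turns the factorization into $X=\pm z^{\tau'}Y(1/z)$ (resp.\ $X=\pm z^{\tau'}Y$). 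The degree conditions are subsumed, because constants are self-reciprocal.
\end{itemize}
Two cosmetic points. The factorization is given, so ``I take $R_1=Y$, $R_2=1$'' should go. When $R_2\equiv c$ you get $X=\gamma z^{\tau}cR_1=\gamma z^{\tau}Y$ exactly, so Parseval is unnecessary. (The cancellations do need $\mathbf y\neq 0$.) What you have proved is the corollary for \emph{linear} equivalence: $Y=\pm z^{\sigma}X$ or $Y=\pm z^{\sigma}X(1/z)$ with $\sigma\in\mathbb{Z}$.

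The genuine gap is your final paragraph, which claims this matches the paper's \emph{cyclic} $T_\tau$ and $R$. Only linear $\Rightarrow$ cyclic holds. The converse fails even when $|\hat{\mathbf x}|=|\hat{\mathbf y}|$ and the supports have equal length. A cyclic shift of a vector whose support fills $\{0,\dots,d-1\}$ wraps around without changing the span, so ``the integer shift stays within the index range'' is false.

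Concretely, take $d=4$, $\mathbf x=(1,2,3,6)^T$ and $\mathbf y=T_2\mathbf x=(3,6,1,2)^T$.
\begin{itemize}
\item $X=(1+2z)(1+3z^2)$ and $Y=R_1R_2$ with $R_1=1+2z$, $R_2=3+z^2$.
\item $X=z^{2}R_1(z)R_2(1/z)$, so the magnitudes agree.
\item $\deg R_1=1<3=\deg Y$.
\item Neither $R_1$ nor $R_2$ is self-reciprocal: $z+2$ is not a scalar multiple of $1+2z$, and $1+3z^2$ is not one of $3+z^2$.
\end{itemize}
Every condition of the corollary holds, yet $\mathbf x\sim\mathbf y$ under the paper's definition, since $\mathbf y=T_2\mathbf x$.

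So with the cyclic relation, the direction ``conditions $\Rightarrow \mathbf x\not\sim\mathbf y$'' is false, and no argument can close it. Only ``$\mathbf x\not\sim\mathbf y\Rightarrow$ conditions'' survives. Your reinterpretation is therefore a necessary correction of the statement, not a harmless convention. You should drop the cyclic-matching claim and state the result for linear equivalence. That version is exactly what Theorem~\ref{th:fgxy} needs: $f\sim g$ for interval indicators gives $X=z^{\sigma}Y$ (translation) or $X=-z^{\sigma}Y(1/z)$ (reflection), which is linear equivalence.
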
}
%The original result not correct! 

\subsection{The turnpike problem}
\label{sec: turnpike}
The \emph{turnpike problem} \cite{HD} asks whether a point set
$$
P = \{a_1, \ldots, a_m\} \subset \mathbb{R}
$$
can be recovered, up to translation and reflection, from its iset of 
pairwise differences
$$
P - P = \{a_i - a_j : 1 \leq i,j \leq m\}.
$$
The following result shows that, if $P\subset \mathbb{Z}$ satisfies a
\emph{collision-free condition}, then $P$ is uniquely determined by $P-P$,
up to translation and reflection. 
\begin{defn}[Collision-Free Condition]
\label{def: collision_free}
A finite set $P = \{\va_1, \ldots, \va_m\} \subset \R^d$ satisfies the \emph{collision-free condition} if for any indices $i_1, i_2, j_1, j_2 \in \{1, \ldots, m\}$ with $i_1 \neq i_2$, the equality 
\[
\va_{i_1} - \va_{i_2} = \va_{j_1} - \va_{j_2}
\]
implies $(i_1, i_2) = (j_1, j_2)$.
\end{defn}
The collision-free condition also means that
distinct ordered pairs give rise to distinct differences, namely,
\[
a_{i_1}-a_{i_2}\neq a_{j_1}-a_{j_2},
\]
whenever $(i_1,i_2)\neq (j_1,j_2)$ and $i_1\neq i_2$.

\begin{lem}(\cite{sixpoint,Dan})
\label{lem: separation_cond}
Let $m\neq 6$ be a positive integer, and let
$
P:=\{a_1,\ldots,a_m\}\subset \mathbb{Z}
$.
Assume that $P$ satisfies the collision-free condition.
 If there exists
$
Q:=\{b_1,\ldots,b_m\}\subset \mathbb{Z}
$
such that $Q-Q=P-P$, then
\[
Q=P+z_0\quad \mathrm{or}\quad Q=-P+z_0
\]
for some $z_0\in \mathbb{Z}$.
\end{lem}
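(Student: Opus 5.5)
Since $P$ and $Q$ are finite sets of integers, I would encode them by the Laurent polynomials
\[
A(z)=\sum_{i=1}^m z^{a_i},\qquad B(z)=\sum_{i=1}^m z^{b_i}.
\]
The hypothesis $Q-Q=P-P$ (as multisets) is equivalent to
\[
A(z)A(1/z)=B(z)B(1/z).
\]
After shifting both sets into $\{0,\dots,d-1\}$, this says exactly that the $0/1$ vectors $\mathbf{x},\mathbf{y}$ with supports $P,Q$ satisfy $|\hat{\mathbf x}|=|\hat{\mathbf y}|$.

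\textbf{Step 1 (factorization).} Theorem~\ref{th:realfac} then gives a real factorization $B=R_1R_2$ with
\[
A(z)=\gamma z^{\tau}R_1(z)R_2(1/z).
\]
Both $A$ and $B$ have nonnegative coefficients summing to $m$, so evaluating at $z=1$ gives $\gamma R_1(1)R_2(1)=m$. I would normalize $\gamma=1$ and, by Gauss's lemma, take $R_1,R_2\in\mathbb{Z}[z]$. The goal is to prove that one factor is a monomial times $\pm1$. This yields $Q=P+z_0$ or $Q=-P+z_0$ directly, consistent with Corollary~\ref{co:ine}.

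\textbf{Step 2 (the case of nonnegative factors).} In the Rosenblatt--Seymour language, write $R_1,R_2$ as generating functions of integer-weighted multisets $U,V$. Then $P=U+V$ and $Q=U-V$ as weighted sums. Suppose first that $U$ and $V$ are genuine sets, each with at least two points. Choose $u_1\neq u_2\in U$ and $v_1\neq v_2\in V$. Then
\[
(u_1+v_1)-(u_2+v_1)=(u_1+v_2)-(u_2+v_2)
\]
is a collision between two distinct ordered pairs of elements of $P$. This contradicts the collision-free condition, so in this case one of $U,V$ is a singleton, as desired.

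\textbf{Step 3 (signed weights, the main obstacle).} The hard part is that $R_1,R_2$ may have negative coefficients, with cancellation producing $0/1$ coefficients in both $R_1(z)R_2(1/z)$ and $R_1(z)R_2(z)$. My plan here is to compare the supports and extreme coefficients:
\begin{itemize}
\item[(a)] The largest and smallest exponents of $A$ and $B$ are products of the leading and trailing terms of $R_1,R_2$, and these coefficients must equal $1$.
\item[(b)] Apply the collision argument of Step~2 to the sub-configurations built from extremal exponents, where no cancellation can occur. This forces $\deg R_1+\deg R_2$ to be small relative to $m$.
\item[(c)] Use $R_1(1)R_2(1)=m$ to reduce to a finite case analysis on the number of terms of $R_1$ and $R_2$.
\end{itemize}
I expect this counting to leave exactly one surviving pattern, with $R_1(1)R_2(1)=6$. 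That pattern is the classical six-point homometric Sidon pair, and it is precisely why $m=6$ must be excluded. The delicate part of this step is controlling cancellation in the middle coefficients. If a self-contained treatment becomes too long, I would rely on the cited works \cite{sixpoint,Dan}, which carry out this case analysis.
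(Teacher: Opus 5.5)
\textbf{There is a genuine gap, and it is Step~3.} Step~3 is not a technical remainder; it is the entire content of the lemma.

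Your Step~2 never uses $m$. It shows that whenever both factors can be taken with nonnegative coefficients, one of them is a monomial, for every $m$, including $m=6$. Now $\{0,1,4,10,12,17\}$ and $\{0,1,8,11,13,17\}$ (the left endpoints in the paper's $m=6$ remark) are collision-free, homometric and not congruent. So every factorization of that pair must have a factor with mixed signs. All the difficulty therefore sits in the signed case, and any argument there must genuinely distinguish $m=6$. Your sketch does not do this.

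\emph{Step (b).} The only terms that are a priori protected from cancellation are the extreme ones. Let $u_{\min},u_{\max}$ and $v_{\min},v_{\max}$ be the extreme exponents of $R_1,R_2$. With Theorem~\ref{th:realfac} as stated, $Q$ corresponds to $U+V$ and $P$ to $U-V$ up to a shift, not the other way round; this is harmless since $Q$ is also collision-free. Then $Q$ is only guaranteed to contain $u_{\max}+v_{\max}$ and $u_{\min}+v_{\min}$, and $P$ only $u_{\max}-v_{\min}$ and $u_{\min}-v_{\max}$. Each set receives two opposite corners of the rectangle $\{u_{\min},u_{\max}\}\times\{v_{\min},v_{\max}\}$, never all four. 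A difference shared by $P-P$ and $Q-Q$ is the hypothesis, not a collision.

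The conclusion of (b) is also impossible. After shifting so that $\min Q=0$, we have $\deg R_1+\deg R_2=\deg B=\mathrm{diam}\,Q=\mathrm{diam}\,P\ge m(m-1)/2$, because $P$ has $m(m-1)/2$ distinct positive integer differences.

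\emph{Step (c).} With signed coefficients, the identity $R_1(1)R_2(1)=m$ does not bound the number $t(\cdot)$ of nonzero terms of either factor. Moreover $m=t(B)\le t(R_1)\,t(R_2)$, so no bound independent of $m$ exists, and no finite case analysis can cover all $m\neq 6$. Your expectation that exactly one pattern survives, with $R_1(1)R_2(1)=6$, is a restatement of the lemma rather than a step toward it.

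As written, therefore, the only nontrivial part of your argument rests on \cite{sixpoint,Dan}. That is exactly what the paper does: it gives no proof of this lemma and imports it from those references.

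\textbf{Step~1 has a smaller, fixable gap.} Theorem~\ref{th:realfac} only provides \emph{real} factors. Its proof assembles $R_2$ from an arbitrary complex factor $Y_2$, so nothing prevents $R_1,R_2$ from having irrational coefficients. Gauss's lemma passes from $\mathbb{Q}[z]$ to $\mathbb{Z}[z]$ and says nothing about real factorizations.

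The repair is to factor over $\mathbb{Q}$ directly, by the standard gcd argument (as in Rosenblatt--Seymour):
\begin{itemize}
\item Shift so that $A(0)=B(0)=1$; then $\deg A=\deg B=\mathrm{diam}\,P$.
\item Put $C=\gcd(A,B)$ in $\mathbb{Q}[z]$, write $A=CA'$, $B=CB'$, and set $F^*(z):=z^{\deg F}F(1/z)$.
\item From $AA^*=BB^*$, cancel $CC^*$ to get $A'(A')^*=B'(B')^*$ with $\gcd(A',B')=1$.
\item Hence $A'\mid (B')^*$, and comparing degrees gives $A'=\lambda (B')^*$ with $\lambda\in\mathbb{Q}$.
\end{itemize}
This yields $A(z)=\lambda z^{\deg B'}C(z)B'(1/z)$ and $B=CB'$, a factorization with rational factors, which Gauss's lemma then makes integral. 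With this repair Steps~1 and~2 are correct, but they only cover the routine reduction and the easy case.
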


\begin{rem}
The turnpike problem is closely related to discrete Fourier phase retrieval. Let
 {\[
{\mathbf p}:=(p_{0},\ldots,p_{d-1})\in \mathbb{R}^d,\qquad
{\mathbf q}:=(q_{0},\ldots,q_{d-1})\in \mathbb{R}^d,
\]
where
\[
p_{j-\min(P\cup Q)}=
\begin{cases}
1, & \text{if } j\in P,\\
0, & \text{if } j\notin P,
\end{cases}
\quad \text{and}\quad
q_{j-\min(P\cup Q)}=
\begin{cases}
1, & \text{if } j\in Q,\\
0, & \text{if } j\notin Q,
\end{cases}
\]
with $d=\max(P\cup Q)-\min(P\cup Q)$}. A simple observation is that
\[
Q-Q=P-P
\]
if and only if
\[
\abs{\widehat{\mathbf p}(\omega)}^2
=
\abs{\widehat{\mathbf q}(\omega)}^2.
\]
By Theorem \ref{th:realfac}, it suffices to consider real factor 
decompositions. The proof in \cite{sixpoint} proceeds in this setting, 
with Theorem \ref{th:realfac} providing the justification for the reduction.

%By Theorem \ref{th:realfac}, it suffices to consider the real factor decomposition. 
%The proof in \cite{sixpoint} is carried out in this real factor case, while
%the treatment of possible complex factors is left implicit. Theorem
%\ref{th:realfac} justifies this reduction, thereby complementing the argument
%and making the underlying reasoning explicit. 有一些重复说明。
\end{rem}

\begin{rem}
It is shown in \cite{sixpoint} and \cite[Page 7, before Example 5.5]{Dan}
that, when $m = 6$, the point set $P$ is not uniquely determined up to 
translation and reflection by the pairwise difference set $P - P$ if and 
only if $P$ takes one of the following forms:
$$
P = \{0,\, a,\, b-2a,\, 2b-2a,\, 2b,\, 3b-a\},
$$
or
$$
P = \{0,\, a,\, 2a+b,\, a+2b,\, 2b-a,\, 3b-a\},
$$
where $a, b \in \mathbb{R}$.

\end{rem}

In this section, we extend Lemma~\ref{lem: separation_cond}, originally established 
for $\mathbb{Z}$, to the real domain $\mathbb{R}$ under the collision-free assumption.
We begin with an auxiliary lemma that generalizes the one-dimensional result of 
Lemma~\ref{lem: separation_cond} to the higher-dimensional setting.

\begin{lem} \label{lem:hig}
Assume that $U,V\subset {\mathbb Q}^d$ are finite. Assume that $U$ and $V$ satisfies the collision-free condition described in Definition \ref{def: collision_free}  with $\abs{U}\neq 6$. If $U-U=V-V$, then 
\begin{equation}\label{eq:deng}
U=V+{\mathbf z}_0, \quad \text{or}\quad U=-V+{\mathbf z}_0,
\end{equation}
for some ${\mathbf z}_0\in {\mathbb Q}^d$.

\end{lem}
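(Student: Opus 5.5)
The plan is to reduce Lemma~\ref{lem:hig} to the one-dimensional integer statement, Lemma~\ref{lem: separation_cond}. We do this with a generic linear projection $\mathbb{Q}^d\to\mathbb{Q}$ followed by clearing denominators.

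\textbf{Step 1: projection.} Choose a vector $\boldsymbol{\lambda}\in\mathbb{Q}^d$ such that the functional $\phi(\mathbf u)=\innerp{\boldsymbol\lambda,\mathbf u}$ is injective on the finite set
\[
D:=\{(\mathbf u_1-\mathbf u_2)-(\mathbf u_3-\mathbf u_4):\mathbf u_i\in U\}\cup\{(\mathbf v_1-\mathbf v_2)-(\mathbf v_3-\mathbf v_4):\mathbf v_i\in V\},
\]
in the sense that $\phi(\mathbf w)\neq 0$ for every nonzero $\mathbf w\in D$. This is possible because only finitely many hyperplanes $\{\boldsymbol\lambda:\innerp{\boldsymbol\lambda,\mathbf w}=0\}$ must be avoided, and $\mathbb{Q}^d$ is not a finite union of proper hyperplanes. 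Next, multiply by a common denominator $N\in\mathbb{N}$ so that $\psi:=N\phi$ maps $U\cup V$ into $\mathbb{Z}$.

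\textbf{Step 2: transfer the hypotheses.} Set $P:=\psi(U)$ and $Q:=\psi(V)$. Since differences of elements of $U$ are in particular second differences (take $\mathbf u_3=\mathbf u_4$), $\psi$ is injective on $U$ and on $V$. Hence $|P|=|U|\neq 6$, and $|Q|=|V|=|U|$; the equality $|V|=|U|$ follows by counting distinct nonzero differences, since collision-freeness gives $|U-U|=|U|(|U|-1)+1$. Collision-freeness of $P$ follows directly from the choice of $\boldsymbol\lambda$. If $\psi(\mathbf u_{i_1})-\psi(\mathbf u_{i_2})=\psi(\mathbf u_{j_1})-\psi(\mathbf u_{j_2})$, then $\psi$ vanishes on an element of $D$, so that element is zero. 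That gives $\mathbf u_{i_1}-\mathbf u_{i_2}=\mathbf u_{j_1}-\mathbf u_{j_2}$, and collision-freeness of $U$ then forces $(i_1,i_2)=(j_1,j_2)$. Finally, linearity of $\psi$ gives $Q-Q=\psi(V-V)=\psi(U-U)=P-P$. Lemma~\ref{lem: separation_cond} then yields $Q=P+z_0$ or $Q=-P+z_0$ for some $z_0\in\mathbb{Z}$.

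\textbf{Step 3: lift back.} This is the main obstacle: an identity among the projected sets must be turned into one in $\mathbb{Q}^d$. Suppose $Q=P+z_0$. Order $U=\{\mathbf u_i\}$ so that $\psi(\mathbf u_1)<\dots<\psi(\mathbf u_m)$, and similarly $V=\{\mathbf v_i\}$. Then the translation matches the orderings, so $\psi(\mathbf v_i)-\psi(\mathbf v_1)=\psi(\mathbf u_i)-\psi(\mathbf u_1)$ for all $i$. Now $\mathbf v_i-\mathbf v_1\in V-V=U-U$, so $\mathbf v_i-\mathbf v_1=\mathbf u_k-\mathbf u_l$ for some $k,l$. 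This gives $\psi(\mathbf u_k-\mathbf u_l)=\psi(\mathbf u_i-\mathbf u_1)$. The difference of the two vectors lies in $D$ and $\psi$ vanishes on it, so by the choice of $\boldsymbol\lambda$ we get $\mathbf u_k-\mathbf u_l=\mathbf u_i-\mathbf u_1$. Hence $\mathbf v_i-\mathbf v_1=\mathbf u_i-\mathbf u_1$ for every $i$, which means $V=U+(\mathbf v_1-\mathbf u_1)$, i.e.\ \eqref{eq:deng} holds with the first alternative. The case $Q=-P+z_0$ is the same argument applied to $-U$ in place of $U$: the set $-U$ is still collision-free, has the same difference set, and reverses the order. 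This gives $U=-V+\mathbf z_0$. Throughout, the only delicate points are the choice of $D$ and the need to use that $\mathbf v_i-\mathbf v_1$ actually belongs to $U-U$, so that the injectivity of $\psi$ on $D$ applies.
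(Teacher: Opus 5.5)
Your argument is correct, but it lifts the one-dimensional conclusion back to $\mathbb{Q}^d$ differently from the paper.

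\textbf{The paper's route.} It projects along a whole family of integer directions $\mathbf{k}$ close to a generic $\mathbf{k}_0$. It applies Lemma~\ref{lem: separation_cond} to every projection, obtaining a sign $s_{\mathbf{k}}$, a permutation $\pi_{\mathbf{k}}$ and a shift $z_{\mathbf{k}}$. It then uses density of rational points on the sphere and a positive-measure pigeonhole argument over the finite set $\{\pm 1\}\times\mathrm{Sym}(m)$ to find $d$ linearly independent directions sharing the same $(s,\pi)$. Solving the resulting linear system gives $\mathbf{u}_j=s\,\mathbf{v}_{\pi(j)}+\mathbf{z}_0$.

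\textbf{Your route.} You use a single rational functional that is nonzero on the nonzero second differences; the paper imposes the same genericity condition on $\mathbf{k}_0$, among others. You invoke the one-dimensional lemma once. You then lift by returning to the hypothesis $V-V=U-U$ in $\mathbb{Q}^d$: since $\mathbf{v}_i-\mathbf{v}_1\in U-U$ and $\psi$ separates the points of $U-U$, the scalar identity $\psi(\mathbf{v}_i-\mathbf{v}_1)=\psi(\mathbf{u}_i-\mathbf{u}_1)$ upgrades to a vector identity. The reflected case follows by replacing $U$ with $-U$, as you note.

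\textbf{What each buys.} Your argument is shorter and purely algebraic. It needs no stereographic projection, no outer measure, and no selection of spanning directions, and it shows that one generic projection already suffices. The paper's argument, in exchange, only uses the projected identities $\Delta(\mathbf{k}\cdot U)=\Delta(\mathbf{k}\cdot V)$ and never revisits the vector-level difference sets. It would therefore still work if only one-dimensional projected data were available along a spanning family of directions.
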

\begin{proof}
It suffices to consider the integer case. For any given $U, V \subset \mathbb{Z}^d$ with $|U| \neq 6$, assume that they satisfy the collision-free condition.
Denote $\Delta(X) := X - X$ for any set $X \subset \mathbb{Z}^d$. 
Assume furthermore that $\Delta(U) = \Delta(V)$.
Since $U$, $V$, and $\Delta(U)$ are finite subsets of $\mathbb{Z}^d$, there exists 
a nonzero vector $\mathbf{k}_0 \in \mathbb{Z}^d$ such that

$$
\langle \mathbf{k}_0, \mathbf{x} - \mathbf{y} \rangle \neq 0, 
\quad \text{for all distinct } \mathbf{x}, \mathbf{y} \in U \cup V \cup \Delta(U).
$$

For any given $X\in \mathbb{Z}^d$, set 
\[
\mathbf{k}_0\cdot X:=\{\innerp{\mathbf{k}_0, \mathbf{x}}: \mathbf{x}\in X\}.
\]
and for $\epsilon\geq 0$, set
\begin{equation}
\begin{aligned}
S(\epsilon)&:=\left\{ \vx\in \R^d\ :\  \left\|\vx-\frac{\vk_0}{\|\vk_0\|_2}\right\|_2\leq \epsilon,\  \|\vx\|_2=1 \right\},\\
K(\epsilon)&:=
\left\{\vk\in \Z^d\setminus \{0\}\ :\  \left\|\frac{\vk_0}{\|\vk_0\|_2}- \frac{\vk}{\|\vk\|_2}\right\|_2\leq \epsilon\right\}.
\end{aligned}
\end{equation}
Then it is directly to see that there exists $\epsilon_0>0$  such that for every $\vk \in K(\epsilon_0)$:
\begin{equation}\label{eq:vk}
\innerp{\vk,\vx-\vy}\neq 0,  \quad \text{ for any distinct }  \vx,\vy\in U\cup V\cup \Delta(U).
\end{equation}
For convenience, set $S_0:=S(\epsilon_0)$ and $K_0:=K(\epsilon_0)$.

% Hence, for any $\vk\in K_0$,  both $\vk\cdot U_0\subset  \Z$ and $\vk\cdot V_0\subset \Z$ satisfy the collision-free condition  which satisfy
%\begin{equation}\label{eq:UVdeng}
%\Delta(\vk\cdot U_0)=\vk\cdot \Delta(U_0)=\vk\cdot \Delta(V_0)=\Delta(\vk\cdot V_0).
%\end{equation}

By the stereographic projection, rational points are dense on the unit sphere $\mathbb{S}^{d-1}$.
A simple observation is that each rational point in $S_0$ corresponds to a point in
$$
K_0' := \left\{ \frac{\mathbf{k}}{\|\mathbf{k}\|} : \mathbf{k} \in K_0 \right\},
$$
which implies that $K_0'$ is dense in $S_0$, and hence $\cl K_0' =S_0$, where $\cl K_0'$ denotes the closure of $K_0'$.
Assume that $U = \{\mathbf{u}_1, \ldots, \mathbf{u}_m\} \subset \mathbb{Z}^d$ and 
$V = \{\mathbf{v}_1, \ldots, \mathbf{v}_m\} \subset \mathbb{Z}^d$.
Let $\mathrm{Sym}(m)$ denote the permutation group of degree $m$.
Then there exists $K_1 \subset K_0$ such that the Lebesgue outer measure of 
$\cl {\left\{ \frac{\mathbf{k}}{\|\mathbf{k}\|} : \mathbf{k} \in K_1 \right\}}$ 
on $\mathbb{S}^{d-1}$ is positive, and for any $\mathbf{k} \in K_1$,

$$
\langle \mathbf{k}, \mathbf{u}_j \rangle = s_{\mathbf{k}} \cdot \langle \mathbf{k}, \mathbf{v}_{\pi_{\mathbf{k}}(j)} \rangle + z_{\mathbf{k}}, 
\quad j = 1, \ldots, m,
$$
where $s_{\mathbf{k}} \in \{1, -1\}$, $z_{\mathbf{k}} \in \mathbb{Z}$, and 
$\pi_{\mathbf{k}} \in \mathrm{Sym}(m)$ all depend on $\mathbf{k}$.
This follows from Lemma~\ref{lem: separation_cond} and the construction of $\mathbf{k}$ 
satisfying \eqref{eq:vk}, which ensures the collision-free condition for 
$\mathbf{k} \cdot U$ and $\mathbf{k} \cdot V$, together with
\begin{equation}\label{eq:UVdeng}
\Delta(\mathbf{k} \cdot U) = \mathbf{k} \cdot \Delta(U) = \mathbf{k} \cdot \Delta(V) = \Delta(\mathbf{k} \cdot V).
\end{equation}
Since $\{-1, +1\}$ and $\mathrm{Sym}(m)$ are both finite, there exist 
$\mathbf{k}_1, \ldots, \mathbf{k}_d \in K_1$ such that 
$\mathrm{span}\{\mathbf{k}_1, \ldots, \mathbf{k}_d\} = \mathbb{R}^d$ and
\begin{equation}\label{eq:dengshi}
\langle \mathbf{k}_i, \mathbf{u}_j \rangle = s \cdot \langle \mathbf{k}_i, \mathbf{v}_{\pi(j)} \rangle + t_i,
\quad j = 1, \ldots, m,\ i = 1, \ldots, d,
\end{equation}
where $t_i \in \mathbb{Z}$, $s \in \{-1, +1\}$, and $\pi \in \mathrm{Sym}(m)$ are independent of $i$.
Applying \eqref{eq:dengshi} and the fact that $\{\mathbf{k}_1, \ldots, \mathbf{k}_d\}$ spans $\mathbb{R}^d$,
we conclude that
$$
\mathbf{u}_j = s \cdot \mathbf{v}_{\pi(j)} + \mathbf{z}_0, \quad j = 1, \ldots, m,
$$
for some $\mathbf{z}_0 \in \mathbb{R}^d$. Since $U, V \subset \mathbb{Z}^d$ and $s \in \{-1, +1\}$,
it follows that $\mathbf{z}_0 \in \mathbb{Z}^d$, which completes the proof of \eqref{eq:deng}.

\end{proof}
We are now ready to state the main result for the real domain $\mathbb{R}$ 
under the collision-free assumption.

\begin{thm}\label{thm: real_result}
Let $m \neq 6$ be a positive integer, and let $P := \{a_1, \ldots, a_m\} \subset \R$ satisfy the collision-free condition. If there exists a set $Q := \{b_1, \ldots, b_m\} \subset \R$ such that $Q - Q = P - P$ as multisets, then there exists some $z_0 \in \R$ such that
\begin{equation}\label{eq:QPz0}
Q = P + z_0 \quad \text{or} \quad Q = -P + z_0.
\end{equation}
\end{thm}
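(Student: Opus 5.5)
The plan is to reduce Theorem~\ref{thm: real_result} to Lemma~\ref{lem:hig} by viewing the real points as vectors over $\mathbb{Q}$. Consider the finite set $P\cup Q\subset\mathbb{R}$ and the $\mathbb{Q}$-vector space $W\subset\mathbb{R}$ it spans. Choose a $\mathbb{Q}$-basis $e_1,\ldots,e_d$ of $W$. This gives a $\mathbb{Q}$-linear isomorphism $\phi:W\to\mathbb{Q}^d$. Set $U:=\phi(P)$ and $V:=\phi(Q)$.

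The first step is to transfer the hypotheses. Since $\phi$ is injective and additive, $\phi(a_{i_1}-a_{i_2})=\phi(a_{j_1})-\phi(a_{j_2})$ holds exactly when $a_{i_1}-a_{i_2}=a_{j_1}-a_{j_2}$. Hence $U$ satisfies the collision-free condition of Definition~\ref{def: collision_free}, and $|U|=m\neq 6$. Likewise, the multiset identity $Q-Q=P-P$ maps to $V-V=U-U$. We also need $V$ to be collision-free, as Lemma~\ref{lem:hig} assumes. This follows from the multiset equality. $P$ collision-free means the $m(m-1)$ nonzero differences of $P$ are distinct. So the nonzero elements of $Q-Q$ appear with multiplicity one, and $m(m-1)$ ordered pairs of $Q$ produce them, which forces $Q$, and hence $V$, to be collision-free.

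The second step is to apply Lemma~\ref{lem:hig}. It gives $U=\pm V+\mathbf z_0$ for some $\mathbf z_0\in\mathbb{Q}^d$. Pulling back through $\phi^{-1}$, which is additive and commutes with negation, yields $P=\pm Q+z$ with $z:=\phi^{-1}(\mathbf z_0)\in W\subset\mathbb{R}$. Rearranging gives $Q=P-z$ or $Q=-P+z$, which is \eqref{eq:QPz0}.

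The main thing to get right is that Lemma~\ref{lem:hig} is stated for $\mathbb{Q}^d$ rather than $\mathbb{Z}^d$; its proof first clears denominators. Clearing denominators uniformly preserves both collision-freeness and the equality of difference multisets, so no extra work is needed. I also check that multiplicities are respected in both directions, because the turnpike statement concerns multisets. Beyond these points, the main subtlety lies in the proof of Lemma~\ref{lem:hig} itself, whose projection argument is already assumed.
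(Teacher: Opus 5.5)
Your proposal is correct and follows essentially the same route as the paper: write the points of $P\cup Q$ in coordinates with respect to a $\mathbb{Q}$-basis of their $\mathbb{Q}$-span, invoke Lemma~\ref{lem:hig}, and translate back. Your explicit multiplicity count showing that $Q$ (hence $V$) is collision-free fills in a step the paper only asserts.
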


\begin{proof}
Let $P = \{a_1, \ldots, a_m\} \subset \mathbb{R}$ and $Q = \{b_1, \ldots, b_m\} \subset \mathbb{R}$ 
satisfy $Q - Q = P - P$ as multisets.
Consider the finite-dimensional $\mathbb{Q}$-vector space spanned by all elements of $P$ and $Q$:
$$
W = \mathrm{span}_{\mathbb{Q}}\{a_1, \ldots, a_m, b_1, \ldots, b_m\} \subset \mathbb{R}.
$$
Let $\{e_1, \ldots, e_d\} \subset \mathbb{R}$ be a $\mathbb{Q}$-basis of $W$. 
Every element of $P$ and $Q$ can be uniquely written as a $\mathbb{Q}$-linear combination of the basis vectors:

$$
a_i = \boldsymbol{\alpha}_i \cdot \mathbf{e}, \qquad b_i = \boldsymbol{\beta}_i \cdot \mathbf{e},
$$
where $\mathbf{e} := (e_1, \ldots, e_d) \in \mathbb{R}^d$, 
$\boldsymbol{\alpha}_i = (\alpha_{i,1}, \ldots, \alpha_{i,d}) \in \mathbb{Q}^d$, and 
$\boldsymbol{\beta}_i = (\beta_{i,1}, \ldots, \beta_{i,d}) \in \mathbb{Q}^d$.
Since $\{e_1, \ldots, e_d\}$ is a $\mathbb{Q}$-basis of $W$, the condition $Q - Q = P - P$ 
implies that 
$U := \{\boldsymbol{\alpha}_1, \ldots, \boldsymbol{\alpha}_m\} \subset \mathbb{Q}^d$ and 
${V} := \{\boldsymbol{\beta}_1, \ldots, \boldsymbol{\beta}_m\} \subset \mathbb{Q}^d$ 
satisfy the collision-free condition and ${V} - {V} = U - U$ as multisets.
By Lemma~\ref{lem:hig}, there exists 
$\mathbf{z}_0 = (z_{0,1}, \ldots, z_{0,d}) \in \mathbb{Q}^d$ such that
$$
\{\boldsymbol{\alpha}_1, \ldots, \boldsymbol{\alpha}_m\} 
= \{\boldsymbol{\beta}_1, \ldots, \boldsymbol{\beta}_m\} + \mathbf{z}_0
\quad \text{or} \quad
\{\boldsymbol{\alpha}_1, \ldots, \boldsymbol{\alpha}_m\} 
= -\{\boldsymbol{\beta}_1, \ldots, \boldsymbol{\beta}_m\} + \mathbf{z}_0.
$$
Translating back via the basis, this gives
$$
Q = P + z_0 \quad \text{or} \quad Q = -P + z_0,
$$
where $z_0 = \sum_{k} z_{0,k} \cdot e_k \in \mathbb{R}$, which completes the proof.
 
\end{proof}

\section{Proof of Theorem \ref{th:fanli}}
\label{sec: sec5}

%In this section, we demonstrate that, for any integer $m \geq 3$, there exist functions $f, g \in \mathcal{I}_m$ such that $\abs{\hat{f}(\omega)} = \abs{\hat{g}(\omega)}$ for all $\omega\in {\mathbb R}$, yet $f \not\sim g$. 
We begin with the following result, derived from Theorem~\ref{th:realfac} and Corollary \ref{co:ine}, 
which can be used to construct counterexamples to phase retrieval uniqueness, 
i.e., pairs $f \not\sim g$ such that
$$
|\hat{f}(\omega)| = |\hat{g}(\omega)| \quad \text{for all } \omega \in \mathbb{R}.
$$
\begin{thm}\label{th:fgxy}
Suppose that 
\[
f(x):=\sum_{k=1}^{m}\mathds{1}_{\big[\frac{a_k}{P},\frac{b_k}{P}\big]}(x)\in \mathcal{I}_m, \quad g(x):=\sum_{k=1}^{m'}\mathds{1}_{\big[\frac{a'_k}{P},\frac{b'_k}{P}\big]}(x)\in \mathcal{I}_{m'},
\]
where $a_k, b_k, a'_k, b'_k$ are nonnegative integers and $P$ is a positive integer. 
Define the polynomials
\begin{equation}\label{eq:XYZ}
X(z):=\sum_{k=1}^m z^{b_k}- \sum_{k=1}^m z^{a_k}, \qquad Y(z):=\sum_{k=1}^{m'} z^{b'_k}- \sum_{k=1}^{m'} z^{a'_k}
\end{equation}
for $z\in \mathbb{C}$.
Then $|\widehat{f}(\omega)|=|\widehat{g}(\omega)|$ for all $\omega \in \mathbb{R}$ if and only if 
there exists a real polynomial factorization $Y = R_1 \cdot R_2$, $\gamma\in \{1,-1\}$  and $\tau \in \mathbb{Z}$ such that
\begin{equation*}
X(z) = \gamma\cdot z^\tau \cdot R_1(z) \cdot {R_2(1/z)}.
\end{equation*}
{Furthermore, if $1\leq {\rm deg}(R_1)<{\rm deg}(Y)$,  $\gamma_1\cdot z^{\tau_1}\cdot  R_1(1/z)\not\equiv R_1(z)$ and $\gamma_2\cdot z^{\tau_2} \cdot R_2(1/z)\not\equiv R_2(z)$,  for any $\gamma_1,\gamma_2\in \{-1,+1\}$ and any integers $\tau_1$ and $\tau_2$, then $f\not\sim g$.}
\end{thm}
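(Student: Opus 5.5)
The plan is to reduce the continuous problem to a discrete one and then invoke Theorem~\ref{th:realfac} and Corollary~\ref{co:ine}.

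\textbf{Step 1: express $\widehat f$ through $X$.} By the computation preceding \eqref{eqn: fourier},
\[
\widehat f(\omega)=\frac{1}{-2\pi\mathrm{i}\,\omega}\sum_{k=1}^m\Bigl(e^{-2\pi\mathrm{i}\,b_k\omega/P}-e^{-2\pi\mathrm{i}\,a_k\omega/P}\Bigr)=\frac{X\bigl(e^{-2\pi\mathrm{i}\,\omega/P}\bigr)}{-2\pi\mathrm{i}\,\omega},
\]
and similarly $\widehat g(\omega)=Y\bigl(e^{-2\pi\mathrm{i}\,\omega/P}\bigr)/(-2\pi\mathrm{i}\,\omega)$. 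Hence, for $\omega\neq0$, $|\widehat f(\omega)|=|\widehat g(\omega)|$ is equivalent to $|X(e^{-2\pi\mathrm{i}\,\omega/P})|=|Y(e^{-2\pi\mathrm{i}\,\omega/P})|$. Both sides are continuous, so the point $\omega=0$ follows by continuity; indeed $\widehat f(0)=|\Omega|$ is the limit.

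As $\omega$ ranges over $\mathbb R$, the point $e^{-2\pi\mathrm{i}\,\omega/P}$ covers the whole unit circle. So the condition is that $|X|=|Y|$ on the unit circle.

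\textbf{Step 2: pass to coefficient vectors.} Choose $d$ larger than all exponents and let $\mathbf x,\mathbf y\in\mathbb R^d$ be the coefficient vectors of $X$ and $Y$. These have entries in $\{0,\pm1\}$. Then $\hat{\mathbf x}(\omega)=X(e^{-2\pi\mathrm{i}\,\omega/d})$ also ranges over the unit circle. Therefore $|X|=|Y|$ on the circle is exactly the hypothesis $|\hat{\mathbf x}|=|\hat{\mathbf y}|$ of Theorem~\ref{th:realfac}. That theorem gives the claimed equivalence with a real factorization $Y=R_1R_2$ and $X=\gamma z^\tau R_1(z)R_2(1/z)$. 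This proves the first assertion.

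\textbf{Step 3: the inequivalence claim.} Under the stated conditions on $R_1$ and $R_2$, Corollary~\ref{co:ine} yields $\mathbf x\not\sim\mathbf y$. It remains to show that $f\sim g$ would force $\mathbf x\sim\mathbf y$, which is the main point to check.

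Suppose $g(x)=f(x+c)$. Then $m=m'$, and the sets of left endpoints and of right endpoints shift by $Pc$. Since the endpoints are integers, $Pc$ is an integer $t$, so $Y(z)=z^{-t}X(z)$.

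Suppose instead $g(x)=f(-x+c)$. Then the left endpoints of $g$ are the negatives of the right endpoints of $f$ shifted by $Pc$, and vice versa. This gives
\[
Y(z)=\sum z^{t-a_k}-\sum z^{t-b_k}=-z^{t}X(1/z).
\]

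In both cases $X=\pm z^{\tau}Y(z)$ or $X=\pm z^{\tau}Y(1/z)$. Relative to the factorization, these correspond to the trivial choices $R_2$ constant or $R_1$ constant. One then translates this into $\mathbf x=\gamma T_\tau\mathbf x'$ form, with $R$ or without it, taking $d$ large enough that the cyclic shifts do not wrap. That gives $\mathbf x\sim\mathbf y$, contradicting the corollary.

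The expected obstacle is the bookkeeping in this last step. The corollary's notion of equivalence is cyclic of length $d$, while the polynomial relations are non-cyclic. The fix is to choose $d$ exceeding twice the maximal degree, so that the polynomial identities and their cyclic versions coincide.
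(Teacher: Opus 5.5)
Your proposal is correct and follows essentially the same route as the paper: rewrite $|\widehat f|=|\widehat g|$ as $|X|=|Y|$ on the unit circle, pass to coefficient vectors to apply Theorem~\ref{th:realfac}, and invoke Corollary~\ref{co:ine} for the inequivalence. Your Step~3 is a genuine addition: the paper leaves implicit that $f\sim g$ forces $X=\pm z^{\tau}Y(z)$ or $X=\pm z^{\tau}Y(1/z)$, and hence $\mathbf x\sim\mathbf y$. Note that this direction needs no large $d$, since a non-cyclic relation between polynomials of degree $<d$ is automatically a cyclic one; the no-wrap choice of $d$ (exceeding twice the maximal degree) is only what makes cyclic equivalence coincide with these polynomial relations in the corollary's own direction.
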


\begin{proof}
A simple calculation shows that 
  $\abs{\widehat{f}(\omega)}= \abs{\widehat{g}(\omega)}$ for all $\omega \in {\mathbb R}$ if and only if 
  \begin{equation}\label{eq:deng1}
  \abs{\sum_{k=1}^{m}{\exp(2\pi \mathrm{i}\cdot b_k\cdot \omega)-\exp(2\pi \mathrm{i} \cdot a_k\cdot \omega)}}
  =  \abs{\sum_{k=1}^{m'}{\exp(2\pi \mathrm{i}\cdot b'_k\cdot \omega)-\exp(2\pi \mathrm{i} \cdot a'_k\cdot \omega)}}.
  \end{equation}
  Set $d:=\max\{a_1,\ldots,a_m, b_1,\ldots,b_m,a'_1,\ldots,a'_{m'}, b'_1,\ldots,b_{m'} \}+1$ and
  \[
  {\mathbf x}:=(x_0,\ldots,x_{d-1})\in {\mathbb R}^d,\quad  {\mathbf y}:=(y_0,\ldots,y_{d-1})\in {\mathbb R}^d.
  \]
  Here,
  \begin{align*}
x_j &= \begin{cases}
-1, & \text{if } j \in \{a_1, \ldots, a_m\}, \\
1, & \text{if } j \in \{b_1, \ldots, b_m\},\\
0, & \text{else},
\end{cases}\quad 
y_j = \begin{cases}
-1, & \text{if } j \in \{a'_1, \ldots, a'_{m'}\}, \\
1, & \text{if } j \in \{b'_1, \ldots, b'_{m'}\},\\
0, & \text{else}.
\end{cases}
\end{align*}
A simple observation is that 
(\ref{eq:deng1}) holds if and only if $\abs{\hat{\mathbf x}(\omega)}=\abs{\hat{\mathbf y}(\omega)}$ for all $\omega\in {\mathbb R}$. 
Here, $\hat{\mathbf{x}}$ and $\hat{\mathbf{y}}$ denote the discrete Fourier transforms of $\mathbf{x}$ and $\mathbf{y}$, respectively, as defined in (\ref{eq:dftxy}).
   Then  $\abs{\widehat{f}(\omega)}= \abs{\widehat{g}(\omega)}$ for all $\omega \in {\mathbb R}$ if and only if    $\abs{\widehat{\mathbf x}(\omega)}= \abs{\widehat{\mathbf y}(\omega)}$ for all $\omega \in {\mathbb R}$. According to Theorem \ref{th:realfac}, we arrive at the if and only if conclusion. Furthermore,  it follows from Corollary \ref{co:ine}  that $f\not\sim g$ { if $1\le \deg(R_1)<\deg(Y)$, $\gamma_1\cdot z^{\tau_1}\cdot  R_1(1/z)\not\equiv R_1(z)$ and $\gamma_2\cdot z^{\tau_2} \cdot R_2(1/z)\not\equiv R_2(z)$,  for any $\gamma_1,\gamma_2\in \{-1,+1\}$ and any integers $\tau_1$ and $\tau_2$.}
  
\end{proof}

We next present the proof of Theorem \ref{th:fanli}
%\begin{thm}\label{th:fanli}
%For any integer $m\geq 3$, there exist functions  $f_0,g_0\in \cI_m$ such that $\abs{\widehat{f_0}(\omega)}=\abs{\widehat{g_0}(\omega)}$ for all $\omega\in %{\mathbb R}$ but $f_0\not\sim g_0$.
%\end{thm}
\begin{proof}[ Proof of Theorem \ref{th:fanli}]
For any integer $m \geq 3$, we construct functions 
\[
f_m(x)=\sum_{k=1}^{m}\mathds{1}_{[{a_k},{b_k}]}(x)\in \cI_m, \quad g_m(x)=\sum_{k=1}^{m}\mathds{1}_{[{a'_k},{b'_k}]}(x)\in \cI_m
\]
where the nonnegative integers $a_k, b_k, a'_k, b'_k$ are chosen to satisfy the following conditions. Let
\[
X_m(z):=\sum_{k=1}^m z^{b_k}- \sum_{k=1}^m z^{a_k}, \qquad Y_m(z):=\sum_{k=1}^{m} z^{b'_k}- \sum_{k=1}^{m} z^{a'_k}.
\]
According to Theorem  \ref{th:fgxy}, in order to construct $f_m$ and $g_m$ satifying $\abs{\widehat{f_m}(\omega)}=\abs{\widehat{g_m}(\omega)}$ for all $\omega \in {\mathbb R}$ and $f_m\not\sim g_m$, we require that there exist real coefficient polynomials $R_{m,1}, R_{m,2}$ with $Y_{m}(z) = R_{m,1}(z) \cdot R_{m,2}(z)$,
and there exists an correponding integer $\tau_m \in \mathbb{Z}$ such that
\[
X_m(z) =  z^\tau \cdot R_{m,1}(z) \cdot R_{m,2}(z^{-1}).
\]
{Here, $1\leq {\rm deg}(R_{m,1})<{\rm deg}(Y_m)$ $\gamma_1\cdot z^{\tau_1}\cdot  R_{m,1}(1/z)\not\equiv R_{m,1}(z)$ and $\gamma_2\cdot z^{\tau_2} \cdot R_{m,2}(1/z)\not\equiv R_{m,2}(z)$,  for any $\gamma_1,\gamma_2\in \{-1,+1\}$ and any integers $\tau_1$ and $\tau_2$.}

We construct counterexamples $f_m,g_m$ with correponding $R_{m,1}$ and $R_{m,2}$ for $m = 3, 4, 5, 6$, and more 
generally for $m = 3n+1$ with $n \geq 2$, $m = 3n+2$ with $n \geq 2$, 
and $m = 3n$ with $n \geq 3$, thereby covering all integers $m \geq 3$.

\textbf{(1) Case: $m = 3$}. 

Let $f_3, g_3: \mathbb{R} \rightarrow \mathbb{R}$ be defined as
\[
f_3(x):=\mathds{1}_{[0,5]}(x)+ \mathds{1}_{[6,9]}(x)+ \mathds{1}_{[11,12]}(x)\in \cI_3,\quad \text{and}\quad g_3(x):=\mathds{1}_{[0,1]}(x)+\mathds{1}_{[2,8]}(x)+\mathds{1}_{[10,12]}(x)\in \cI_3.
\]
The polynomials $R_{3,1}$   and $R_{3,2}$   associated with $f_3$  and $g_3$
  are given by
\[
\begin{aligned}
R_{3,1}(z):=(z-1)\cdot (z^7+z^2+1),\quad\text{and}\quad R_{3,2}(z):=z^4+z+1.
\end{aligned}
\]

\textbf{(2) Case: $m=4$}.

Let $f_4, g_4: \mathbb{R} \rightarrow \mathbb{R}$ be defined as
\[
\begin{aligned}
f_4(x):=&\mathds{1}_{[0,1]}(x) + \mathds{1}_{[4,7]}(x) + \mathds{1}_{[9,11]}(x) + \mathds{1}_{[12,14]}(x)\in \cI_4,\\
g_4(x):=&\mathds{1}_{[0,1]}(x) + \mathds{1}_{[3,5]}(x) + \mathds{1}_{[6,9]}(x) + \mathds{1}_{[12,14]}(x)\in \cI_4.
\end{aligned}
\]
The polynomials $R_{4,1}$   and $R_{4,2}$   associated with $f_4$  and $g_4$
  are given by
\[
\begin{aligned}
R_{4,1}(z):=(z - 1)\cdot (z^3 - z + 1), \ \text{and}\ 
R_{4,2}(z):=z^{10} + z^9 + z^8 + z^7 + z^6 + z^2 + z + 1.
\end{aligned}
\]

\textbf{(3) Case: $m=5$}. 

Let $f_5, g_5: \mathbb{R} \rightarrow \mathbb{R}$ be defined as
\[
\begin{aligned}
f_5(x):=&\mathds{1}_{[0,1]}(x) + \mathds{1}_{[3,4]}(x) + \mathds{1}_{[5,6]}(x) + \mathds{1}_{[7,8]}(x) + \mathds{1}_{[10,13]}(x)\in \mathcal{I}_5,\\
g_5(x):=&\mathds{1}_{[0,1]}(x) + \mathds{1}_{[2,3]}(x) + \mathds{1}_{[4,6]}(x) + \mathds{1}_{[7,8]}(x) + \mathds{1}_{[11,13]}(x)\in \mathcal{I}_5.
\end{aligned}
\]
 The polynomials $R_{5,1}$   and $R_{5,2}$   associated with $f_5$  and $g_5$
  are given by
\[
\begin{aligned}
R_{5,1}(z):=(z - 1)\cdot (z^7 + z^6 + z^5 + z^4 + z^3 + z^2 + 1), \quad
R_{5,2}(z):=z^5 - z^2 + 1.
\end{aligned}
\]

\textbf{(4) Case: $m=6$.}

Let $f_6, g_6: \mathbb{R} \rightarrow \mathbb{R}$ be defined as
\[
\begin{aligned}
f_6(x):=&\mathds{1}_{[0,1]}(x) + \mathds{1}_{[2,4]}(x) + \mathds{1}_{[6,7]}(x) + \mathds{1}_{[8,10]}(x) + \mathds{1}_{[11,12]}(x) + \mathds{1}_{[13,15]}(x)\in \mathcal{I}_6,\\
g_6(x):=&\mathds{1}_{[0,2]}(x) + \mathds{1}_{[3,4]}(x) + \mathds{1}_{[6,8]}(x) + \mathds{1}_{[9,10]}(x) + \mathds{1}_{[11,13]}(x) + \mathds{1}_{[14,15]}(x)\in \mathcal{I}_6.
\end{aligned}
\]
 The polynomials $R_{6,1}$   and $R_{6,2}$   associated with $f_6$  and $g_6$
  are given by
\[
\begin{aligned}
R_{6,1}(z):=(z - 1)\cdot (z^{11} + z^6 + 1), \quad
R_{6,2}(z):=z^3 + z^2 + 1.
\end{aligned}
\]

\textbf{(5) Case: $m = 3n + 1$ with $n \geq 2$.}

Let $f_m, g_m: \mathbb{R} \rightarrow \mathbb{R}$  be defined as
\[
\begin{aligned}
f_m(x):=&\mathds{1}_{[0,1]}(x)+\mathds{1}_{[3,6]}(x)+\mathds{1}_{[7,8]}(x)+\mathds{1}_{[9,10]}(x)\\
&\quad +\sum_{k=1}^{n-1} \mathds{1}_{[4+7k,4+7k+1]}(x)+\sum_{k=1}^{n-1} \mathds{1}_{[7+7k,7+7k+1]}(x)+\sum_{k=1}^{n-1} \mathds{1}_{[9+7k,9+7k+1]}(x)\in \mathcal{I}_m,\\
g_m(x):=
&\mathds{1}_{[0,1]}(x)+\mathds{1}_{[2,3]}(x)+\mathds{1}_{[4,7]}(x)+\mathds{1}_{[9,10]}(x)\\
&\quad +\sum_{k=1}^{n-1} \mathds{1}_{[4+7k,4+7k+1]}(x)+\sum_{k=1}^{n-1} \mathds{1}_{[7+7k,7+7k+1]}(x)+\sum_{k=1}^{n-1} \mathds{1}_{[9+7k,9+7k+1]}(x)\in \mathcal{I}_m\\
\end{aligned}
\]
The polynomials $R_{m,1}$   and $R_{m,2}$   associated with $f_m$  and $g_m$
  are given by
\[
\begin{aligned}
R_{m,1}(z):=(z - 1)\cdot \left(1+\sum_{k=0}^{n-1}z^{4+7k}\right), \quad
R_{m,2}(z):=z^5+z^3+1.
\end{aligned}
\]

\textbf{(6) Case: $m=3n+2$ with $n\geq 2$.}

 Let $f_m, g_m: \mathbb{R} \rightarrow \mathbb{R}$  be defined as
\[
\begin{aligned}
f_m(x)
:=&\mathds{1}_{[0,1]}(x)+\mathds{1}_{[3,4]}(x)+\mathds{1}_{[5,7]}(x)+\mathds{1}_{[9,10]}(x)+\mathds{1}_{[11,12]}(x),\\
&+\sum_{k=1}^{n-1} \mathds{1}_{[6+7k,6+7k+1]}(x)+\sum_{k=1}^{n-1} \mathds{1}_{[9+7k,9+7k+1]}(x)+\sum_{k=1}^{n-1} \mathds{1}_{[11+7k,11+7k+1]}(x)\in \mathcal{I}_m,\\
g_m(x)
:=
&\mathds{1}_{[0,1]}(x)+\mathds{1}_{[2,3]}(x)+\mathds{1}_{[5,7]}(x)+\mathds{1}_{[8,9]}(x)+\mathds{1}_{[11,12]}(x)\\
&+\sum_{k=1}^{n-1} \mathds{1}_{[6+7k,6+7k+1]}(x)+\sum_{k=1}^{n-1} \mathds{1}_{[8+7k,8+7k+1]}(x)+\sum_{k=1}^{n-1} \mathds{1}_{[11+7k,11+7k+1]}(x)\in \mathcal{I}_m.
\end{aligned}
\]
The polynomials $R_{m,1}$   and $R_{m,2}$   associated with $f_m$  and $g_m$
  are given by
\[
\begin{aligned}
R_{m,1}(z):=(z - 1)\cdot \left(1+\sum_{k=0}^{n-1}z^{6+7k}\right), \quad
R_{m,2}(z):=z^5+z^3+1.
\end{aligned}
\]

\textbf{(7) Case: $m=3n$ with $n\geq 3$}. 

Let $f_m, g_m: \mathbb{R} \rightarrow \mathbb{R}$  be defined as
\[
\begin{aligned}
f_m(x):=
&\mathds{1}_{[0,1]}(x)+\mathds{1}_{[3,4]}(x)+\mathds{1}_{[5,6]}(x)\\
&+\sum_{k=0}^{n-2} \mathds{1}_{[7+8k,7+8k+1]}(x)+\sum_{k=0}^{n-2} \mathds{1}_{[10+8k,10+8k+1]}(x)+\sum_{k=0}^{n-2} \mathds{1}_{[12+8k,12+8k+1]}(x)\in \mathcal{I}_m,\\
g_m(x):=
&\mathds{1}_{[0,1]}(x)+\mathds{1}_{[2,3]}(x)+\mathds{1}_{[5,6]}(x)\\
&+\sum_{k=0}^{n-2} \mathds{1}_{[7+8k,7+8k+1]}(x)+\sum_{k=0}^{n-2} \mathds{1}_{[9+8k,9+8k+1]}(x)+\sum_{k=0}^{n-2} \mathds{1}_{[12+8k,12+8k+1]}(x)\in \mathcal{I}_m.
\end{aligned}
\]
The polynomials $R_{m,1}$   and $R_{m,2}$   associated with $f_m$  and $g_m$
  are given by
\[
\begin{aligned}
R_{m,1}(z):=(z - 1)\cdot \left(1+\sum_{k=0}^{n-2}z^{7+8k}\right), \quad
R_{m,2}(z):=z^5+z^3+1.
\end{aligned}
\]

\end{proof}

\begin{comment}
\begin{example}
\label{example: m_3}
We now present counter-examples for $m=3,4,5,\ldots$. Specifically, for any given $m \in \{3,4,\ldots\}$, we construct functions $f$ and $g$ with $m$ intervals such that $|\widehat{f}|=|\widehat{g}|$, yet $f \not\sim g$. It should be emphasized that these counter-examples are not unique; we provide only one possible construction for each case. For each pair of counter-examples $f$ and $g$, to establish that $|\widehat{f}|=|\widehat{g}|$, we also present the corresponding functions $X_1, X_2$, and $X_3$ as described in Lemma \ref{lem: key} for convenience.
\end{example}
\end{comment}

\section{Separation property: a sufficient condition for Fourier phase retrieval}\label{sec:sep}
\label{sec: sec6}

As shown in Theorem \ref{th:fanli}, for any $m\geq 3$, there exists an $f_m\in \cI_m$
that cannot be uniquely determined from $\abs{\widehat{f}_m(\omega)}$ for all
$\omega\in \mathbb{R}$. The purpose of this section is to establish a sufficient
condition under which any $f\in \cI_m$ can be uniquely determined by
$\abs{\widehat{f}(\omega)}$ for all $\omega\in \mathbb{R}$.

This condition is based on the separation condition, originally introduced in
\cite{fienup}, which we recall as follows.

 \begin{defn}\label{de:sep}\cite{fienup}
 We say the $m$ intervals $I_1,\ldots,I_m\subset {\mathbb R}$ satisfy the {\em separation condition} if 
 \[
(I_{i_1}-I_{i_2})\cap (I_{j_1}-I_{j_2})=\emptyset,
\]
 for $1\leq i_1,i_2,j_1,j_2\leq m$, $j_1\neq j_2$, and $(i_1,i_2)\neq (j_1,j_2)$, where $(\ ,\ )$ denotes an ordered pair, and 
\[
I_{i_1}-I_{i_2}:=\{x-y\ :\ x\in I_{i_1},\ y\in I_{i_2}\}. 
\]
 \end{defn}

%\begin{rem}
{Intuitively, the separation condition requires the intervals to be 
sufficiently far apart relative to their lengths. More precisely, 
given $m$ intervals $I_j = [a_j, b_j]$, $j = 1, \ldots, m$, with
\[
a_1 < b_1 < a_2 < b_2 < \cdots < a_m < b_m,
\]
the separation condition requires that
\[
\min_{1 \leq j \leq m-1}(a_{j+1} - b_j) > \max_{1 \leq j \leq m}(b_j - a_j).
\]}

\begin{comment}
%when adding new interval $I_{m+1}=[a_{m+1},b_{m+1}]$ such that $b_m<a_{m+1}<b_{m+1}$. Then the $m+1$ intervals $I_j$, $j=1,\ldots,m+1$, satisfy the separation condition if and only if
%\[
%a_{m+1}-b_m>\max\{b_m-a_1,\ b_{m+1}-a_{m+1}\}.
%\]
%In other words, the distance from $I_{m+1}$ to the convex hull of $\bigcup_{j=1}^m I_j$ must be larger than both the length of this convex hull and the length of $I_{m+1}$. 
%The conclusion is not correct! 例如取I_1=[0,0+\epsilon], I_1=[3,3+\epsilon], I_2=[3,3+\epsilon], (\epsilon为一个很小的正数)
%\end{rem}
\end{comment}
The following theorem shows that if $f \in \mathcal{I}_m$ whose 
$m$ intervals satisfy the separation condition, then $f$ 
can be uniquely determined by $|\widehat{f}(\omega)|$ for all 
$\omega \in \mathbb{R}$.
 
\begin{thm}
\label{thm: separation_condition}
Assume that the function $f\in \cI_m$ is denoted as $f(x)=\sum_{j=1}^{m} \mathds{1}_{I_j}(x)$, where $m\neq 6$.  The intervals $I_{j}:=[a_j,b_j], j=1,\ldots,m$ with $a_k,b_k\in {\mathbb{R}}$ and $a_1<b_1<\ldots<a_m<b_m$,  satisfy the  {separation condition}
Then, $f$ can be uniquely determined by $|\widehat{f}(\omega)|$ for any $\omega \in {\mathbb R}$. 
\end{thm}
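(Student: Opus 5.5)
Throughout, write $f_-(x):=f(-x)$. The plan is to work with the autocorrelation $f\mycircledast f_-$ instead of $\abs{\widehat f}$ directly. The reduction is to the turnpike result, Theorem~\ref{thm: real_result}, applied to the set of \emph{midpoints} of the intervals. The cases $m\in\{1,2\}$ are already covered by Theorem~\ref{thm: unique_determine}, so I assume $m\ge 3$ and $m\neq 6$.

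\textbf{Step 1: set up and match the number of intervals.} Let $g=\sum_{k=1}^{m'}\mathds{1}_{I'_k}\in\cI_{m'}$ with $\abs{\widehat g}=\abs{\widehat f}$.
\begin{itemize}
\item As in the proof of Theorem~\ref{thm: unique_determine}, the Chebyshev-system argument applied to \eqref{eqn: temp_final} gives $m'=m$.
\item By \eqref{eqn: f_property}, $f\mycircledast f_-=g\mycircledast g_-$.
\item By Lemma~\ref{lem: key}, $f\mycircledast f_-=\sum_{i,j}\mathds{1}_{I_i}\mycircledast\mathds{1}_{-I_j}$. Each summand is a nonnegative trapezoid supported exactly on the closed interval $I_i-I_j=[a_i-b_j,\,b_i-a_j]$.
\item Its length is $\abs{I_i}+\abs{I_j}$ and its midpoint is $c_i-c_j$, where $c_k:=(a_k+b_k)/2$.
\end{itemize}

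\textbf{Step 2: count support components.} By the separation condition for $f$, the sets $I_i-I_j$ with $i\ne j$ are pairwise disjoint and disjoint from $\bigcup_k(I_k-I_k)$. The diagonal sets $I_k-I_k$ all contain $0$. Hence $\operatorname{supp}(f\mycircledast f_-)$ has exactly $m(m-1)+1$ connected components.

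For $g$, $\operatorname{supp}(g\mycircledast g_-)$ is a union of $m^2$ closed intervals $I'_i-I'_j$, and the $m$ diagonal ones share the point $0$. So it has at most $m(m-1)+1$ components, with equality if and only if:
\begin{itemize}
\item the off-diagonal sets $I'_i-I'_j$ are pairwise disjoint, and
\item each of them is disjoint from the central block.
\end{itemize}
Since the two supports coincide, equality holds. Therefore $g$ also satisfies the separation condition, and the non-central components are exactly the sets $I'_i-I'_j$, $i\ne j$. I expect this counting argument to be the main point; it is what lets us avoid assuming anything about $g$ a priori.

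\textbf{Step 3: apply the turnpike result to the midpoints.} Let $P=\{c_1,\dots,c_m\}$ and $P'=\{c'_1,\dots,c'_m\}$. Matching the non-central components of the common support, the multiset of midpoints $\{c_i-c_j\}_{i\ne j}$ equals $\{c'_i-c'_j\}_{i\ne j}$. Adding the $m$ zeros gives $P-P=P'-P'$ as multisets.

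$P$ is collision-free:
\begin{itemize}
\item for distinct ordered pairs with $i\ne j$, the midpoints $c_i-c_j$ lie in pairwise disjoint sets, so they are distinct;
\item $c_i-c_j\ne 0$ for $i\ne j$.
\end{itemize}
Theorem~\ref{thm: real_result} (with $m\neq 6$) then gives $P'=P+z_0$ or $P'=-P+z_0$.

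Replacing $g$ by $g(-x+c)$ if necessary, which keeps $\abs{\widehat g}$ unchanged, I may assume $P'=P+z_0$. Because both families are ordered left to right, this means $c'_k=c_k+z_0$ for every $k$. Then the component of the common support centered at $c_i-c_j=c'_i-c'_j$ is simultaneously $I_i-I_j$ and $I'_i-I'_j$. Comparing lengths gives
\[
\abs{I_i}+\abs{I_j}=\abs{I'_i}+\abs{I'_j}\qquad\text{for all } i\ne j.
\]

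\textbf{Step 4: recover the lengths.} Since $m\ge3$, for any $i$ we can choose distinct $j,k\ne i$ and use
\[
\abs{I_i}=\tfrac12\bigl(s_{ij}+s_{ik}-s_{jk}\bigr),\qquad s_{ij}:=\abs{I_i}+\abs{I_j}.
\]
This shows $\abs{I'_i}=\abs{I_i}$ for all $i$. Together with $c'_i=c_i+z_0$, this yields $I'_i=I_i+z_0$, so $g(x)=f(x-z_0)$ and therefore $g\sim f$.

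The only delicate point is Step 2: making sure that closed-interval overlaps, including touching endpoints, are correctly excluded. This is where the disjointness of closed sets in Definition~\ref{de:sep} is used.
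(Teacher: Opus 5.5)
Your proof is correct and takes essentially the same route as the paper: pass to $f\mycircledast f_-$, identify the off-diagonal support components with the sets $I_i-I_j$, apply the turnpike result (Theorem~\ref{thm: real_result}) to the midpoints, and then recover the lengths from those components. Two of your steps tighten points the paper treats loosely: your component count shows that any $g$ with $\abs{\widehat g}=\abs{\widehat f}$ inherits the separation condition, which the paper leaves implicit; and your length recovery via $s_{ij}=\abs{I_i}+\abs{I_j}$ (deferring $m\le 2$ to Theorem~\ref{thm: unique_determine}) avoids a gap in the paper's version, where $\bigcap_{j}U_j$ fails to isolate $\abs{I_1}$ when, for example, $m=3$ and $\abs{I_2}=\abs{I_3}\neq\abs{I_1}$.
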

 \begin{rem}
Here we require $m \neq 6$. One may naturally wonder whether the conclusion still holds when $m = 6$. Unfortunately, this is not the case: when $m = 6$, the separation condition becomes insufficient to uniquely determine the indicator function $f$.
 For example, consider
\[
\begin{aligned}
f(x)&=\mathds{1}_{[0,0.25]}(x)+\mathds{1}_{[1,1.25]}(x)+\mathds{1}_{[4,4.25]}(x)+\mathds{1}_{[10,10.25]}(x)+\mathds{1}_{[12,12.25]}(x)+\mathds{1}_{[17,17.25]}(x),\\
g(x)&=\mathds{1}_{[0,0.25]}(x)+\mathds{1}_{[1,1.25]}(x)+\mathds{1}_{[8,8.25]}(x)+\mathds{1}_{[11,11.25]}(x)+\mathds{1}_{[13,13.25]}(x)+\mathds{1}_{[17,17.25]}(x).
\end{aligned}
\]
By direct calculation, both functions involve exactly 6 intervals that satisfy the separation condition. We observe that $|\widehat{f}(w)|=|\widehat{g}(w)|$
 for all $w\in \mathbb{R}$, yet $f \not\sim g$, demonstrating that the separation condition alone cannot guarantee unique determination when $m=6$.
\end{rem}

\begin{comment}
\begin{rem}
The lemma above can be readily extended to the rational domain $\mathbb{Q}$ by multiplying all elements in $X$ and $Y$ by a sufficiently large integer to convert them into integers. We can then apply the above result by replacing $\mathbb{Z}$ with $\mathbb{Q}$, without significantly modifying the details of the proof.
\end{rem}
\end{comment}
We next present the proof of  Theorem \ref{thm: separation_condition}.  
\begin{proof}[Proof of  Theorem \ref{thm: separation_condition}]

Let $F:\mathbb{R}\rightarrow \mathbb{R}$ be defined by $F(x)=(f\mycircledast f_-)(x)$, where $f_-(x)=f(-x)$. Since 
\[
\widehat{F}(\omega)=|\widehat{f}(\omega)|^2,
\]
determining $f$ from $|\widehat{f}|$ is equivalent to showing that $f$ can be determined by $F$. The following proof is divided into three steps.

\textbf{Step 1: Determine the number of intervals $m$ of $f$.}

Based on the definition of $F$, we observe that $F(x)=F(-x)$ holds for any $x\in \mathbb{R}$, indicating that $F$ is an even function. Consequently,  the support of $F$, defined as 
\[
{\text{supp}(F)}:=\overline{\{x\ :\ F(x)\neq 0\}},
\]
 exhibits a symmetric structure and can be expressed as the union of disjoint intervals:
\begin{equation}
\label{eqn: interval_expression}
{\text{supp}(F)}=\left(\cup_{k=1}^{M}[e_k,f_k]\right)\cup  \left(\cup_{k=1}^{M}[-f_k,-e_k]\right)\cup [-e_0,e_0],
\end{equation}
with some $M\in \mathbb{Z}_{+}$ and $0<e_0<e_1<f_1<\cdots<e_M<f_M$.
Through direct calculation, we obtain:
\[
\left(f\mycircledast f_-\right)(x)=\sum_{1\leq k_1,k_2\leq m} \left(\mathds{1}_{I_{k_1}}\mycircledast \mathds{1}_{-I_{k_2}}\right)(x)
\]
Using (\ref{eqn: ab_cd}), we determine that the support of each convolution term is given by:
\[
\text{supp}(\mathds{1}_{I_{k_1}}\mycircledast \mathds{1}_{-I_{k_2}})=I_{k_1}-I_{k_2}. 
\]
The separation condition ensures that the supports of different convolution terms are disjoint, i.e.,
\begin{equation}
\label{eqn: supp}
{\text{supp}(\mathds{1}_{I_{j_1}}\mycircledast \mathds{1}_{-I_{j_2}})}\cap {\text{supp}(\mathds{1}_{I_{i_1}}\mycircledast \mathds{1}_{-I_{i_2}})}=\emptyset, 
\end{equation}
for any $1\leq j_1,j_2,i_1,i_2\leq m$, $i_1\neq i_2$, and $(j_1,j_2)\neq (i_1,i_2)$. 
Additionally, we note that the diagonal terms $(j,j)$, $j=1,\ldots,m$, contribute to the central interval:
\[
\cup_{j=1}^{m}{\text{supp}(\mathds{1}_{I_{j}}\mycircledast \mathds{1}_{-I_{j}})}=[-e_0,e_0],
\]

Since there are precisely $m(m-1)/2$ distinct pairs $(i_1,i_2)$
  with $1\leq i_1<i_2\leq m$, we establish the relationship between $M$ and $m$ as:
\[
M\,\,=\,\,\frac{m(m-1)}{2}.
\]
Therefore, when $F$ is given with its support  expressed as in (\ref{eqn: interval_expression}) containing $2M+1$ intervals, the number $m$ of intervals for $f$ can be uniquely determined by solving the quadratic equation above, yielding:
\[
m=\left\lfloor \sqrt{2M} \right\rfloor + 1.
\]

\textbf{Step 2: Determine the midpoint of each interval $I_j=[a_j,b_j]$ for $j=1,\ldots,m$.}

Let $c_j$ denote the midpoint of each interval $[a_j,b_j]$, that is, $c_j:=(a_j+b_j)/2$ for $j=1,\ldots,m$. Similarly, let $d_j$ denote the midpoint of each interval $[e_j,f_j]$ given in (\ref{eqn: interval_expression}), that is, $d_j:=(e_j+f_j)/2$ for $j=1,\ldots,M$, and set
$d_{-j}:=-d_j, j=1,\ldots,M$, 
 $d_0:=0$. Define the set $X:=\{c_1,\ldots,c_m\}$. Then, by direct calculation, we have
$X-X=D$, where  
\[
D=\{d_{-m(m-1)/2},d_{-m(m-1)/2+1},\ldots,d_{-1},d_{0},d_{1},\ldots,d_{m(m-1)/2-1},d_{m(m-1)/2}\}.
\]
Moreover, the separation condition ensures that all elements of $D$ 
are distinct. Let $c_0$ be a positive integer such that all elements 
of $c_0 \cdot X - c_0 \cdot X$ and $c_0 \cdot D$ are integers. By 
Lemma~\ref{thm: real_result}, for $m \neq 6$, the set 
$c_0 X = \{c_0\cdot c_j\}_{j=1}^m$ is uniquely determined from 
$c_0 X - c_0 X = c_0 D$ up to shift and reflection. Consequently, 
$X$ is also uniquely determined from $X - X$. Without loss of 
generality, write $\{c_j\}_{j=1}^m$ with $c_1 < c_2 < \cdots < c_m$.
  To eliminate the effects of shift and reflection, we set $c_1=0$
  and impose the condition $c_2-c_1<c_{m}-c_{m-1}$. The inequality $c_{2}-c_{1}\neq c_{m}-c_{m-1}$
is guaranteed by the separation condition, which states that:
\[
\text{supp}(\mathds{1}_{I_1}\mycircledast\mathds{1}_{-I_2})\cap \text{supp}(\mathds{1}_{I_{m-1}}\mycircledast\mathds{1}_{-I_m})=\emptyset.
\]

 \textbf{Step 3: Determination of the intervals $[a_j,b_j]$, for $j=1,\ldots,m$.}

Since $c_1, \ldots, c_m$ have already been determined, for each 
$j = 2, \ldots, m$ we can identify $d_{l_j}$ such that 
$c_j - c_1 = d_{l_j}$, thereby determining the midpoint $d_{l_j}$ 
of the interval $[e_{l_j}, f_{l_j}]$, and hence the interval 
$[e_{l_j}, f_{l_j}]$ itself. Considering the function $f(x) * f(-x)$ 
on $[e_{l_j}, f_{l_j}]$ and applying the formula in 
\eqref{eqn: ab_cd}, the corresponding set of interval lengths
\[
U_{j} := \{|I_j|, |I_1|\}
\]
is determined for each $j = 2, \ldots, m$. Taking the intersection 
$\bigcap_{j=2}^{m} U_j$ yields $|I_1|$, and consequently $|I_j|$ 
is determined for each $j = 2, \ldots, m$. Since both the midpoint 
$c_j$ and the interval length $|I_j|$ are now known for each 
$j = 1, \ldots, m$, the intervals $[a_j, b_j]$, $j = 1, \ldots, m$, 
are uniquely determined. This completes the proof.

\begin{comment}
Besides, based on the separation condition, we can see denote 
\[
\alpha=\min\{\min_{k} (e_{k+1}-f_{k}),\min_k (b_k-a_k)/2\}.
\]
Then $d_{k+1}-d_{k}\geq \alpha$.  Take $c_{k,R}\in \mathbb{Q}$, $k=1,\ldots,N$, such that 
\[
|c_{k,R}-c_k|\leq \alpha/6.
\]
 Then we can directly see that:
 \[
| c_{k_1,R}-c_{k_2,R}-(c_{k_1}-c_{k_2})|\leq \alpha/3.
 \]

Now denote $D_k=\{r\in \mathbb{Q}\ :\ |r-d_k|\leq \alpha/3\}$, we can directly see that the sets $D_{k}$, $k=1,\ldots,N(N-1)/2$ are disjoint. Take $d_{k,R}\in D_{k}$ such that $\{d_{k,R}\ :\ k=1,\ldots,N(N-1)/2\}$ is the different set of some set $\{s_1,\ldots,s_N\}$ with $s_1<s_2<\cdots<s_N$, then such kind of set is not empty as the statement in above. Based on Lemma XX???, we can see that $\{s_1,\dots,s_{N}\}$ is unique up to flip and shift ambiguity. 

Moreover, by the triangle inequality, we can see that:
\[
|s_k-c_k|\leq \alpha
\]
Therefore, we can see that $s_k\in [a_k,b_k]$. Since $s_{k_1}-s_{k_2}\in I_{k_1}-I_{k_2}$ with $k_1>k_2$, and the lengths $\{|I_{k_1}|,|I_{k_2}|\}$ can be determined by $F$, then the corresponding $I_{k_1}$, $I_{k_2}$ can be determined. Combined with the determination of $c_k$, all the informations of $f$ is now determined. 
\end{comment}
\end{proof}

\end{document}